\apptocmd{\sloppy}{\hbadness 10000\relax}{}{}
\newcommand{\R}{\mathbb{R}}
\newcommand{\Z}{\mathbb{Z}}
\newcommand{\Pol}{\mathcal{P}}
\newcommand{\Sim}{\mathcal{S}}
\newcommand{\A}{\mathcal{A}}
\newcommand{\Q}{\mathcal{Q}}
\newcommand{\W}{\mathcal{W}}
\newcommand{\X}{\mathcal{X}}
\newcommand{\D}{\Delta}
\newcommand{\bzero}{\textnormal{\textbf{0}}}
\newcommand{\be}{\textnormal{\textbf{e}}}
\newcommand{\bp}{\textnormal{\textbf{p}}}
\newcommand{\bv}{\textnormal{\textbf{v}}}
\newcommand{\conv}{\mathrm{conv}}
\newcommand{\ehr}{\mathrm{ehr}}
\newcommand{\h}{h^*}
\newcommand{\Pyr}{\mathrm{Pyr}}
\newcommand{\vol}{\mathrm{vol}}
\newcommand{\Vol}{\mathrm{Vol}}
\newcommand{\orow}{\rowcolor[gray]{0.95}}
\newcommand{\erow}{}
\newtheorem{thm}{Theorem}[section]
\newtheorem{conj}[thm]{Conjecture}
\newtheorem{cor}[thm]{Corollary}
\newtheorem{lem}[thm]{Lemma}
\newtheorem{prop}[thm]{Proposition}
\newtheorem{question}[thm]{Question}
\theoremstyle{definition}
\newtheorem{defn}[thm]{Definition}
\newtheorem{ex}[thm]{Example}
\date{}
\begin{document}

\author[G.\,Balletti]{Gabriele Balletti}
\address{
    Department of Mathematics\\
    Stockholm University\\
    SE-$106$\ $91$\ Stockholm\\
    Sweden
}
\email{balletti@math.su.se}

\keywords{Lattice polytopes; enumeration; classification; volume.}
\subjclass[2010]{52B20 (Primary); 52B11 (Secondary)}

\title{Enumeration of lattice polytopes by their volume}

\maketitle

\begin{abstract}
	A well known result by Lagarias and Ziegler states that there are
	finitely many equivalence classes of $d$-dimensional lattice polytopes
	having volume at most $K$, for fixed constants $d$ and $K$.
	We describe an algorithm for the complete enumeration of such equivalence
	classes for arbitrary constants $d$ and $K$.
	The algorithm, which gives another proof of the finiteness result, is
	implemented for small values of $K$, up to dimension six.
	The resulting database contains and extends several existing ones, and has
    been used to correct mistakes in other classifications.
    When specialized to three-dimensional smooth polytopes, it extends previous
    classifications by Bogart et al., Lorenz and Lundman.
    Moreover, we give a structure theorem for smooth polytopes with few lattice
    points that proves that they have a quadratic triangulation and that we use,
    together with the classification, to describe smooth polytopes having small
    volume in arbitrary dimension.
    In dimension three we enumerate all the simplices having up to 11 interior
    lattice points and we use them to conjecture a set of sharp inequalities
    for the coefficients of the Ehrhart $\h$-polynomials, unifying several
    existing conjectures.
    Finally, we extract and discuss minimal interesting examples from the
    classification, and we study the frequency of properties such as being
    spanning, very ample, IDP, and having a unimodular cover or triangulation.
    In particular, we find the smallest polytopes which are very ample but not
    IDP, and with a unimodular cover but without a unimodular triangulation. 
\end{abstract}

\section{Introduction}
\label{sec:intro}

Finiteness results are not uncommon in the study of lattice polytopes. Most of 
these are proven by fixing the dimension, showing an upper bound for the volume
and then using the following result by Lagarias and Ziegler.

\begin{thm}[{\cite[Theorem~2]{LZ91}}]\label{thm:LZ}
    Up to unimodular equivalence, there are finitely many $d$-dimensional 
    lattice polytopes having volume lower than a constant $K$.
\end{thm}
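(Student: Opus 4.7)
My plan is to prove the theorem in two stages: first for simplices, then for general polytopes.

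\textbf{Stage 1 (simplices).} Let $S$ be a $d$-dimensional lattice simplex with vertices $v_0, v_1, \ldots, v_d$ and $\vol(S) \le K$. Translating so that $v_0 = 0$, form the matrix $M = [v_1 \mid \cdots \mid v_d] \in \Z^{d \times d}$, which satisfies $|\det M| = d!\,\vol(S) \le d!\,K$. A unimodular change of coordinates acts on $S$ by replacing $M$ with $UM$ for some $U \in \mathrm{GL}_d(\Z)$, so after choosing $U$ appropriately we may place $M$ in Hermite normal form: upper triangular with positive diagonal entries $a_1, \ldots, a_d$ satisfying $a_1 \cdots a_d \le d!\,K$, and off-diagonal entries reduced modulo the pivots. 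Only finitely many such matrices exist, which handles the simplex case.

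\textbf{Stage 2 (general polytopes).} Let $P$ be a $d$-dimensional lattice polytope with $\vol(P) \le K$. Choose any $d+1$ affinely independent vertices $v_0, \ldots, v_d$ of $P$ and set $S = \conv(v_0, \ldots, v_d) \subseteq P$, so $\vol(S) \le K$. By Stage 1 we may assume $v_0 = 0$ and that the coordinates of $v_1, \ldots, v_d$ are bounded by a constant $C = C(d, K)$. For any additional vertex $w$ of $P$, convexity forces
\[
T_i := \conv(v_0, \ldots, \widehat{v_i}, \ldots, v_d, w) \subseteq P,
\]
hence $\vol(T_i) \le K$ for every $i \in \{0, \ldots, d\}$.

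\textbf{Confinement of $w$.} Expanding $\vol(T_i)$ as a determinant along the column containing $w$, each inequality $\vol(T_i) \le K$ becomes a linear constraint on $w$ whose coefficients are $(d-1) \times (d-1)$ minors of the (already bounded) matrix $M$. Geometrically, these are the $d+1$ facet affine functionals of $S$; since any $d$ facet normals of a $d$-simplex are linearly independent, the $d+1$ inequalities cut out a bounded region. Thus $w$ lies in a bounded subset of $\Z^d$, leaving only finitely many choices for the vertex set of $P$, and therefore finitely many polytopes up to equivalence.

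\textbf{Main obstacle.} Stage 1 is routine manipulation with Hermite normal form. The substantive step is Stage 2: one must verify that the $d+1$ apex-volume inequalities \emph{jointly} confine $w$. This reduces to the linear-algebra fact that any $d$ of the outer facet normals of a $d$-simplex are linearly independent, which I would verify using the dual-basis relation $\langle \ell_i, v_j \rangle = \pm (\det M)\,\delta_{ij}$ for $i, j \ge 1$, where $\ell_i$ is normal to the facet opposite $v_i$.
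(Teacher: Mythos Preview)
Your proof is correct and follows the same two-stage strategy as the paper: first reduce simplices to Hermite normal form (your Stage~1 is identical to the paper's Lemma~3.1), then bound every additional vertex by volume constraints coming from the sub-simplices it spans with the fixed base simplex.

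The execution of Stage~2 differs slightly. The paper packages the constraints on an extra vertex into the \emph{volume vector} of a corank-one configuration (Proposition~3.2 and Lemma~2.4): the entries of that vector are exactly your signed volumes $\pm d!\,\vol(T_i)$, and the fact that they sum to zero with bounded positive and negative parts gives the same confinement you obtain from the facet-normal argument. The paper also invokes an Ehrhart-theoretic bound $|P\cap\Z^d|\le K+d$ to cap the number of vertices before assembling $P$ from the pieces $P_i$. You avoid both detours: your barycentric/facet-normal argument shows directly that every vertex lies in a bounded lattice set, so the vertex set is finite without needing a separate count. Your route is thus a bit more elementary for the bare finiteness statement, while the paper's volume-vector formulation is chosen because it feeds directly into the enumeration algorithm of Section~4.
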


Note that working up to unimodular equivalence, i.e. up to affine lattice
preserving maps in $GL_d(\Z) \times \Z^d$ is an obvious requirement that 
we will often avoid to mention.

Once it is known that a family of lattice polytopes is finite, it is tempting
to give a complete description of it.
Most of the times this seems not to be possible in full generality, and it is 
instead done explicitly only fixing ``small enough'' parameters, first and 
foremost the dimension. 
A well-known example of finiteness result is the finiteness of $d$-dimensional 
lattice polytopes having a fixed positive number of interior lattice points, 
which follows from a volume bound proven by Hensley \cite{Hen83}. 
This result paved the way to explicit classifications of families of lattice 
polytopes having a fixed number of interior lattice points. 
The best example is probably the massive classification of reflexive polytopes 
(which have one interior lattice point) up to dimension four performed by 
Kreuzer and Skarke to study mirror symmetric Calabi-Yau manifolds 
\cite{KS98,KS00}. 
Another example of polytopes having exactly one interior point are the smooth 
Fano polytopes, fully enumerated up to dimension nine 
\cite{Bat81,WW82,Bat99,Sat00,KN09,Obr07,LP08}. 
Without additional restrictions, lattice polytopes having one and two interior 
lattice points are classified in dimension three \cite{Kas10,BK16}.

In the last years, many other examples of these kind of results were proven.
In \cite{AWW11,AKW17} is proven that in each dimension there are finitely many 
hollow lattice polytopes which are maximal up to inclusion, and they are 
classified in dimension three. 
In \cite{BHH15} the finiteness of smooth polytopes having fixed number of 
lattice points is shown in each dimension. 
Such polytopes are enumerated in dimension three, up to 16 lattice points 
\cite{Lor10,Lun13}. 
A finiteness result in dimension three for polytopes of width larger than one 
and fixed number of interior points is proven in \cite{BS16a}, and an explicit 
enumeration has been performed up to 11 lattice points \cite{BS16b,BS17}.

All the aforementioned results are proven via bounding the volume of the 
considered family of polytopes and applying Theorem~\ref{thm:LZ}. 
In this paper we take a natural step, and use Lagarias and Ziegler's Theorem 
to perform a systematic enumeration of all lattice polytopes of fixed 
dimension and volume that are within computational reach. This is done by 
giving an alternative proof of Theorem~\ref{thm:LZ} that has the advantage 
of being efficiently implementable. The key is to build the polytopes ``from 
below'', starting from a simplex and progressively adding vertices, instead of 
``from above'' as the original proof of Theorem~\ref{thm:LZ} suggest, 
carving out all the possible lattice polytopes from a big cube.

A result with a similar taste, but only in dimension two, has been achieved by 
Castryck in \cite{Cas12}. With a ``moving out'' technique, he gives a different
proof of a finiteness result and classifies all lattice polygons having up to 
30 interior lattice points.

The paper has the following structure. In Section~\ref{sec:point_configurations}
we give an introduction to point configurations, building the necessary tools for the
rest of the paper. In Section~\ref{sec:LZproof} we give an independent proof of
Theorem~\ref{thm:LZ}, which leads to an algorithm, whose implementation is 
discussed in Section~\ref{sec:implementation}. In Section~\ref{sec:results} we discuss
the results of the classification and we compare it with existing ones.
In Section~\ref{sec:smooth} we specialize the classification to smooth
polytopes. In this settings our classification extends other ones performed
in dimension three, and creates a database of small smooth polytopes. We give
a structure theorem for $d$-dimensional smooth polytopes having at most $3d-4$
lattice points (Theorem~\ref{thm:smooth_few_points}), which we immediately 
apply to fully categorize smooth polytopes having normalize volume at most 10
(Proposition~\ref{prop:small_smooth}).
In Section~\ref{sec:3-simplices} we use the classification together with 
existing volume bounds for simplices with a fixed number of interior lattice
points to classify all the three-dimensional lattice simplices having up
to 11 interior lattice points (Corollary~\ref{cor:simplices}).
In this section, we use the classification of the precedent section to
give conjectural Ehrhart inequalities for three-dimensional lattice
polytopes (Conjecture~\ref{conj:main}).
In the final Section~\ref{sec:examples}, we look for interesting examples
contained in the database, and we observe the commonness of polytopes which 
are spanning, very ample, IDP, have a unimodular cover/triangulation
(see Appendix~\ref{app:tables}).

This project originated from the following question that Christian Haase
posed for the participants of the ``Workshop on Convex Polytopes for Graduate
Students'' in Osaka, during January 2017.

\begin{quotation}
``How many 6-dimensional lattice polytopes of volume 5 have the integer-decomposition property?''
\end{quotation}
From the tables in Appendix~\ref{app:tables} we can read that the answer is 27.

I would like to thank my PhD advisor Benjamin Nill for all the inspiring and
patient discussions.
The idea of Algorithm~\ref{alg:pol} came out during one of those.
I am also grateful to Al Kasprzyk for teaching me how to use Magma and Paco Santos
for helpful remarks.
The author is partially supported by the Vetenskapsr{\aa}det grant~NT:2014-3991

\section{Invitation to point configurations and volume vectors}
\label{sec:point_configurations}

In this section we sketch some basic concepts and results regarding point 
configurations, triangulations and volume vectors of lattice polytopes. 
This material will be used in Section~\ref{sec:LZproof} for proving Theorem~\ref{thm:LZ}. 
We use \cite[Chapter~4]{DRS10} as a reference, but we also refer to 
\cite[Chapter~6]{LZ91} for details.

A \emph{point configuration} is a finite set of points $\A$ in an affine space 
$\R^d$. 
We say that a point configuration $\A$ is \emph{independent} if none of its 
points is an affine combination of the rest, otherwise we say that $\A$ is 
\emph{dependent}. 
A point configuration has \emph{corank one} if it has a unique (up to scalar 
multiplication) dependence relation 
$\sum_{\bp \in \A} \lambda_\bp \bp = \bzero$. 
Such dependence relation defines a partition of $\A$ in the sets
{\small
\[
    J_+ \coloneq \left\{\bp \in \A : \lambda_\bp > 0      \right\}, \quad
    J_0 \coloneq \left\{\bp \in \A : \lambda_\bp = \bzero \right\}, \quad
    J_- \coloneq \left\{\bp \in \A : \lambda_\bp < \bzero \right\}.
\]
}
Such partition is unique, up to switching $J_+$ with $J_-$. 
Given any point configuration $\A$ one can consider the polytope $P$ defined 
as the convex hull of the points of $\A$, i.e $P_\A \coloneq \conv(\A)$. 
If $\A$ has corank one then $P_\A$ has exactly two different triangulations in 
simplices having vertices on $A$.

\begin{lem}[{\cite[Lemma 2.4.2]{DRS10}}]
    If a point configuration $\A$ has corank one, then the following are the 
    only two triangulations of $P_\A$ in simplices having vertices in $\A$:
    \[
        \mathcal{T}_+\coloneq\left\{C \subset \A : J_+ \not\subseteq C\right\},
        \qquad \text{and} \qquad
        \mathcal{T}_-\coloneq\left\{C \subset \A : J_- \not\subseteq C\right\}.
    \]
\end{lem}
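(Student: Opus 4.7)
The plan is to argue in two parts: first characterize the maximal affinely independent subsets of $\A$ and describe how any two such simplices intersect, then use this to verify that both $\mathcal{T}_+$ and $\mathcal{T}_-$ are triangulations and that no other triangulation exists.

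Since $\A$ has corank one, $|\A|=d+2$ where $d=\dim \mathrm{aff}(\A)$, and the unique (up to scalar) affine dependence relation has support $J_+\cup J_-$. Hence $C\subseteq\A$ is affinely dependent iff $C\supseteq J_+\cup J_-$, so the maximal affinely independent subsets of $\A$ are precisely those of the form $\A\setminus\{q\}$ for $q\in J_+\cup J_-$. Two such maximal simplices $\conv(\A\setminus\{q\})$ and $\conv(\A\setminus\{q'\})$ (with $q\neq q'$) share the $(d-1)$-face $\conv(\tau)$, where $\tau=\A\setminus\{q,q'\}$, so they meet cleanly in $\conv(\tau)$ iff $q$ and $q'$ lie on opposite sides of $\mathrm{aff}(\tau)$. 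Applying any affine functional $\ell$ vanishing on $\mathrm{aff}(\tau)$ to the dependence relation $\sum_{p}\lambda_p p=\bzero$ (using $\sum_{p}\lambda_p=0$), all terms for $p\in\tau$ drop out, leaving $\lambda_q\ell(q)+\lambda_{q'}\ell(q')=0$. Thus $\ell(q)$ and $\ell(q')$ have opposite signs, equivalently $q$ and $q'$ lie on opposite sides of $\mathrm{aff}(\tau)$, exactly when $\lambda_q$ and $\lambda_{q'}$ have the same sign, i.e., when $q$ and $q'$ belong to the same one of $J_+,J_-$.

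Next I verify that $\mathcal{T}_+$ and $\mathcal{T}_-$ are triangulations. Pairwise properness of their maximal simplices is immediate from the preceding paragraph. For covering, given $x\in P_\A$ write $x=\sum_{p}\mu_p p$ with $\mu_p\geq 0$ and $\sum_p\mu_p=1$, and consider $\mu_p(t)\coloneq\mu_p+t\lambda_p$; since $\sum_p\lambda_p=0$ this remains a convex combination so long as all $\mu_p(t)\geq 0$. Decreasing $t$ until the first coefficient indexed by $J_+$ hits zero yields a representation of $x$ supported on some $\A\setminus\{q^\ast\}$ with $q^\ast\in J_+$, placing $x$ in a maximal simplex of $\mathcal{T}_+$; increasing $t$ instead gives the analogous statement for $\mathcal{T}_-$.

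Finally, for uniqueness, the maximal simplices of any triangulation $\mathcal{T}$ of $P_\A$ are among the $\A\setminus\{q\}$ with $q\in J_+\cup J_-$, and the overlap criterion forbids the coexistence in $\mathcal{T}$ of two such simplices with $q\in J_+$ and $q'\in J_-$. Hence all removed indices lie in a single $J_\pm$, and a volume count (or the same covering argument applied to interior points) forces $\mathcal{T}$ to contain every such simplex, giving $\mathcal{T}\in\{\mathcal{T}_+,\mathcal{T}_-\}$. I expect the main step to be the sign computation underlying the overlap characterization; the remaining parts of the argument are routine bookkeeping applications of that observation.
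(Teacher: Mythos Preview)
The paper does not give its own proof of this lemma; it simply cites \cite[Lemma~2.4.2]{DRS10}. Your argument is correct and is essentially the standard proof found in that reference: identify the full-dimensional simplices as the sets $\A\setminus\{q\}$ with $q\in J_+\cup J_-$, use the affine relation to determine on which side of a common wall the two opposite vertices lie, and deduce both that $\mathcal{T}_\pm$ are triangulations and that any triangulation must be one of them. The sign computation $\lambda_q\ell(q)+\lambda_{q'}\ell(q')=0$ is exactly the right mechanism, and your covering argument via the one-parameter family $\mu_p+t\lambda_p$ is the usual way to show surjectivity. Nothing is missing.
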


Note that, supposing $P_\A$ full-dimensional, the full-dimensional simplices in
$\mathcal{T}_+$ and $\mathcal{T}_-$ are $|J_+|$ and $|J_-|$ respectively, i.e. 
the signature of $\A$ is the pair of number of simplices of the two ways to 
triangulate $P_\A$ discussed above.
If the unique dependence relation of a corank one point configuration $\A$ is 
entirely supported on $\A$, i.e. if $J_0$ is empty, then we say that $\A$ is a 
\emph{circuit}. 
In particular every proper subset of a circuit is an independent point 
configuration. 
If $\A$ is a circuit the pair $(J_+,J_-)$ is classically called the 
\emph{Radon partition} or the \emph{oriented circuit} of $\A$.

Being interest in lattice polytopes, we move our focus to point configurations 
contained in the lattice $\Z^d$. 
Given a full-dimensional lattice polytope $P$ in $\R^d$, we denote by 
$\Vol(P)$ its normalized volume $\Vol(P) \coloneq d! \vol(P)$, where $\vol(P)$ 
is the standard euclidean volume. 
In other words we set the normalized volume of any lattice simplex $S$ with 
vertices $\bv_1,\ldots,\bv_{d+1}$ to be
\[
    \Vol(S) \coloneq \left| \det
    \begin{pmatrix}
         1	& \dots	& 1 \\
    \bv_1	& \dots	& \bv_{d+1} \\
    \end{pmatrix} \right|,
\]
Then the notion of volume can be extended to arbitrary polytopes via 
triangulations. 
The notion of volume can be extended in a finer way to polytopes, via volume 
vectors. 
For this we agree with the notation used in \cite{BS16a}. 

\begin{defn}
    Let $\A = \{ \bp_1 , \ldots , \bp_n \}$ in $\Z^d$, with $n \geq d+1$. 
    Then the \emph{volume vector} of $\A$ is defined as
    \[
        (w_{i_1,\ldots,i_{d+1}})_{1 \leq i_1 < \cdots < i_{d+1} \leq n }
        \in \Z^{\binom{n}{d+1}}
    \]
    where
    \[
        (w_{i_1,\ldots,i_{d+1}}) \coloneq \det
        \begin{pmatrix}
                1	& \dots	& 1 \\
        \bp_{i_1}	& \dots	& \bp_{i_{d+1}} \\
        \end{pmatrix}.
    \]
\end{defn}

Note that we are assuming that $\A$ has an intrinsic order on the elements.
The volume vector is a powerful invariant,
which almost encodes all the data of a point configuration.

\begin{prop}[{\cite[Proposition~2.2]{BS16a}}]
    Let $\A$ and $\A'$ be the point configurations $\A=\{\bp_1,\ldots,\bp_n\}$ 
    and $\A'=\{\bp_1',\ldots,\bp_n'\}$ in $\Z^d$, and suppose that (with 
    respect to a given ordering) they have the same volume vector 
    $(w_{i_1,\ldots,i_{d+1}})_{1 \leq i_1 < \cdots < i_{d+1} \leq n } \in 
    \Z^{\binom{n}{d+1}}$. 
    Then
    \begin{enumerate}[(1)]
        \item There is a unique unimodular affine map $t : \R^d \to R^d$ with 
              $t(\A) = \A'$ (respecting the order of points).
        \item If $\gcd_{1 \leq i_1 < \cdots < i_{d+1} \leq n } 
              (w_{i_1,\ldots,i_{d+1}}) = 1$, then $t$ is a $\Z$-equivalence 
              between $\A$ and $\A'$. 
              In particular $P_\A$ and $P_{\A'}$ are unimodular equivalent 
              lattice polytopes.
    \end{enumerate}
\end{prop}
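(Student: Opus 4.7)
The plan is to exploit the fact that an affine map is pinned down by its values on any affine basis, and that the volume vector both detects such bases inside $\A$ (by the nonvanishing of some $w_{i_1,\ldots,i_{d+1}}$) and records, through Cramer's rule, the affine coordinates of the remaining points with respect to a chosen basis. I would treat the case in which $\A$ affinely spans $\R^d$, since otherwise one may restrict to the common affine hull of $\A$ and $\A'$, a subspace determined by the vanishing pattern of the $w$'s.

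After relabeling I may assume $w_{1,\ldots,d+1} \neq 0$, so that $(\bp_1,\ldots,\bp_{d+1})$ is an affine basis of $\R^d$; since the volume vectors agree, the same is true of $(\bp_1',\ldots,\bp_{d+1}')$. Let $t : \R^d \to \R^d$ be the unique affine map sending $\bp_i \mapsto \bp_i'$ for $i \leq d+1$; its linear part carries $\conv(\bp_1,\ldots,\bp_{d+1})$ onto $\conv(\bp_1',\ldots,\bp_{d+1}')$, and these simplices share normalized volume $|w_{1,\ldots,d+1}|$, so $t$ has determinant $\pm 1$ and is unimodular in the volume-preserving sense. To see $t(\bp_j) = \bp_j'$ for $j > d+1$, I expand $\bp_j = \sum_{i=1}^{d+1} \alpha_i \bp_i$ with $\sum_i \alpha_i = 1$. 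Cramer's rule identifies $\alpha_i$ with a signed ratio $\pm w_{1,\ldots,\widehat{i},\ldots,d+1,j}/w_{1,\ldots,d+1}$, where the sign depends only on the chosen indexing; the analogous identity therefore holds in $\A'$ with the same scalars, yielding $t(\bp_j) = \sum_i \alpha_i \bp_i' = \bp_j'$. Uniqueness of $t$ is automatic, since any affine map is determined by its values on an affine basis.

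For part (2), the additional hypothesis that the gcd of the volume vector is $1$ is, by a Smith-normal-form argument, equivalent to saying that the sublattice $\Lambda \subset \Z^d$ generated by the differences $\{\bp_i - \bp_1 : i = 2, \ldots, n\}$ equals $\Z^d$: the volume vector entries are precisely, up to sign, the maximal minors of the integer matrix whose columns are the $\bp_i - \bp_1$, and the elementary divisor theorem identifies the index $[\Z^d : \Lambda]$ with the gcd of these minors. Granted this, the linear part of $t$ carries the generating set $\{\bp_i - \bp_1\}$ of $\Z^d$ into $\Z^d$, so it lies in $GL_d(\Z)$; together with $t(\bp_1) = \bp_1' \in \Z^d$, this exhibits $t$ as a $\Z$-equivalence, whence $P_\A$ and $P_{\A'}$ are unimodularly equivalent.

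The main obstacle is the lattice-index statement invoked in the last step: identifying $\gcd w_{i_1,\ldots,i_{d+1}}$ with $[\Z^d : \Lambda]$ requires a careful elementary-divisor computation, together with a bookkeeping of signs to match each $w$-entry with the corresponding maximal minor. Once that is in place, the Cramer's-rule argument and the determinant computation for part (1) are essentially mechanical.
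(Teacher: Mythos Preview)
The paper does not prove this proposition; it is quoted verbatim from \cite[Proposition~2.2]{BS16a} and used as a black box, so there is no proof in the paper to compare yours against. Your argument is the standard one and is essentially the proof that appears in Blanco--Santos: fix an affinely independent $(d+1)$-tuple detected by a nonzero entry of the volume vector, define $t$ on that basis, recover the remaining points via Cramer's rule (the barycentric coordinates being ratios of volume-vector entries), and for part~(2) invoke the elementary-divisor identification of the gcd of maximal minors with the index of the lattice spanned by the difference vectors.

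One small point worth tightening in part~(2): from ``the linear part of $t$ carries a generating set of $\Z^d$ into $\Z^d$'' you conclude $t \in GL_d(\Z)$, but carrying a generating set \emph{into} $\Z^d$ only gives integrality of the matrix. You need either to combine this with $\det t = \pm 1$ from part~(1), or to observe that the images $\bp_i' - \bp_1'$ likewise generate $\Z^d$ (same volume vector, same gcd), so the map is also surjective on $\Z^d$. Either way the conclusion follows, but the sentence as written skips that half-line.
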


We now restrict our interest to point configurations in $\Z^d$ having $d+2$ 
elements. 
We always assume that the point configuration is full-dimensional, i.e. it 
affinely spans $\R^d$. 
Note that this is equivalent to assume the configuration to have corank one. 
In this case, we can simplify, and modify slightly, the notation for the 
entries of the volume vector. 
If $\A = \{ \bp_1, \ldots, \bp_{d+2}\}$, then we denote the volume vector 
of $\A$ as
\begin{equation}
    \label{eq:volumevector}
    w_\A = (w_1 , \ldots , w_{d+2}), \mbox{ where } 
    w_i \coloneq (-1)^{i+1} w_{1,\ldots,\hat{i},\ldots,d+2}.
\end{equation}
The change of sign allows to simplify the statement in the following lemma.

\begin{lem}[{\cite[Equation~(2)]{BS16a}}]\label{lem:encoding}
Let $\A=\{\bp_1,\ldots,\bp_{d+2}\}$ be a corank one point configuration. 
Then its volume vector \[w_\A = (w_1 , \ldots , w_{d+2})\] sums up to zero and 
encodes the unique linear relation in $\A$:
\[
    \sum_{i=1}^{d+2} w_i \bp_i = \bzero, 
    \qquad \text{and} \qquad 
    \sum_{i=1}^{d+2} w_i = 0.
\]
\end{lem}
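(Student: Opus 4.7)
The plan is to interpret each $w_i$ as a signed maximal minor of the $(d+1) \times (d+2)$ matrix
\[
    M \coloneq \begin{pmatrix} 1 & \cdots & 1 \\ \bp_1 & \cdots & \bp_{d+2} \end{pmatrix}
\]
and then derive both assertions from the vanishing of determinants with a repeated row. Writing $M_i$ for the $(d+1)\times(d+1)$ submatrix of $M$ obtained by deleting the $i$-th column, the definition~\eqref{eq:volumevector} reads $w_i = (-1)^{i+1} \det(M_i)$.

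For an arbitrary row vector $\br = (r_1,\ldots,r_{d+2})$, let $M_\br$ be the $(d+2)\times(d+2)$ matrix obtained by stacking $\br$ on top of $M$. Laplace expansion along the first row gives
\[
    \det(M_\br) \;=\; \sum_{i=1}^{d+2} (-1)^{1+i} r_i \det(M_i) \;=\; \sum_{i=1}^{d+2} r_i w_i,
\]
where the cofactor signs exactly cancel the signs built into the definition of $w_i$. Specializing $\br$ to any one of the $d+1$ rows of $M$ makes $M_\br$ have two equal rows, so the right-hand side vanishes. Choosing $\br = (1,\ldots,1)$ yields $\sum_i w_i = 0$, and choosing $\br$ to be the $k$-th coordinate row of $M$ (for $k=1,\ldots,d$) yields $\sum_i w_i (\bp_i)_k = 0$; bundling the $d$ coordinate identities together gives $\sum_i w_i \bp_i = \bzero$.

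It remains to argue that $w_\A$ actually encodes \emph{the} dependence relation. Corank one says that $\ker M$ is one-dimensional, so any element of $\ker M$ is determined up to scalar. The vector $w_\A$ is nonzero because full-dimensionality of $\A$ guarantees that some $(d{+}1)$-point subset is affinely independent, making the corresponding $\det(M_i)$ nonzero. Hence $w_\A$ is a nonzero scalar multiple of the unique dependence relation, which it therefore encodes. The only step needing any care is matching the sign convention $(-1)^{i+1}$ in~\eqref{eq:volumevector} with the cofactor sign in the Laplace expansion, but this is purely bookkeeping and poses no real obstacle.
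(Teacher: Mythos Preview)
Your proof is correct. The paper does not actually prove this lemma: it cites \cite{BS16a} for the result and then offers only an informal geometric reading of the identity $\sum_i w_i = 0$, namely that the positive $w_i$ are the volumes of the maximal simplices in $\mathcal{T}_+$, the negative $w_i$ are (minus) the volumes of those in $\mathcal{T}_-$, and both triangulate the same polytope $P_\A$. That remark says nothing about the other identity $\sum_i w_i \bp_i = \bzero$ or about $w_\A$ being nonzero.

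Your argument is the standard linear-algebra one: the vector of signed maximal minors of a $(d{+}1)\times(d{+}2)$ matrix always lies in its kernel, which you obtain cleanly from Laplace expansion against a repeated row. This handles both identities simultaneously and also makes the nonvanishing of $w_\A$ transparent via full-dimensionality. The paper's geometric picture is useful intuition but is not a self-contained proof; your approach is exactly what one would write down to verify the lemma rigorously.
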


One may think the equality of $\sum_{i=1}^{d+2} w_i = 0$ in the following way. 
The $w_i$'s which are positive are the normalized volumes of the full 
dimensional simplices in $\mathcal{T}_+$, while the negative $w_i$'s equal to 
(minus) the normalized volumes of the full-dimensional simplices in 
$\mathcal{T}_-$. The equality follows by noting that $\mathcal{T}_+$ and 
$\mathcal{T}_-$ are both triangulations of the same polytope. 
This is clarified by the following example.

\begin{ex}
    Let $\A$ be the point configuration given by the columns of the matrix 
    below.
    \[
        \begin{bmatrix}
            0 & 1 & 1 & 0 & 1\\
            0 & 1 & 0 & 1 & 1\\
            0 & 0 & 1 & 1 & 1\\
        \end{bmatrix}
    \]  
    Then, $\A$ is the set of the vertices of the polytope $P_\A$, depicted 
    below.
    \begin{figure}[H]
        \centering
        \tdplotsetmaincoords{65}{45}
        \begin{tikzpicture}[scale=1.5,tdplot_main_coords]
        \draw [thick,black!80,line join=round] (0,1,1)--(1,1,0);
        \fill [white!40,opacity=0.8] (0,0,0)--(1,1,0)--(1,1,1)--(0,1,1)--cycle;
        \draw [thick,black!80,line join=round] (0,0,0)--(1,1,0);
        \draw [thick,black!80,line join=round] (0,0,0)--(1,0,1);
        \draw [thick,black!80,line join=round] (0,0,0)--(0,1,1);
        \draw [thick,black!80,line join=round] (1,1,0)--(1,0,1)--(0,1,1);
        \draw [thick,black!80,line join=round] (1,1,1)--(1,1,0);
        \draw [thick,black!80,line join=round] (1,1,1)--(1,0,1);
        \draw [thick,black!80,line join=round] (1,1,1)--(0,1,1);
        \draw[black!80,fill=black!80] (0,0,0) circle (0.1em) node[below]{$\bp_1$};
        \draw[black!80,fill=black!80] (1,1,0) circle (0.1em) node[below]{$\bp_4$};
        \draw[black!80,fill=black!80] (0,1,1) circle (0.1em) node[above]{$\bp_2$};
        \draw[black!80,fill=black!80] (1,0,1) circle (0.1em) node[xshift=0cm, yshift=-.5cm]{$\bp_3$};
        \draw[black!80,fill=black!80] (1,1,1) circle (0.1em) node[above right]{$\bp_5$};
        \end{tikzpicture}
    \end{figure}
    \noindent The volume vector of $\A$ is $w_\A= (1,-1,-1,-1,2)$. 
    Note that the positive entries (1,2) in $w_\A$ corresponds exactly to the 
    normalized volumes of the two tetrahedra in $\mathcal{T}_+$, while the 
    negative entries (-1,-1,-1) corresponds exactly to (minus) the volumes of 
    the three tetrahedra in $\mathcal{T}_-$.
    
    \begin{center}
    \begin{minipage}{0.3\textwidth}
        \begin{figure}[H]
        \centering
        \tdplotsetmaincoords{65}{45}
        \begin{tikzpicture}[scale=1.5,tdplot_main_coords]
        \fill [white!40,opacity=0.8] (0,0,0)--(1,1,0)--(0,1,1)--cycle;
        \draw [thick,black!80,line join=round] (0,0,0)--(1,1,0);
        \draw [thick,black!80,line join=round] (0,0,0)--(1,0,1);
        \draw [thick,black!80,line join=round] (0,0,0)--(0,1,1);
        \draw [thick,black!80,line join=round] (1,1,0)--(1,0,1)--(0,1,1)--cycle;
        \draw[black!80,fill=black!80] (0,0,0) circle (0.1em) node[below]{};
        \draw[black!80,fill=black!80] (1,1,0) circle (0.1em) node[below]{};
        \draw[black!80,fill=black!80] (0,1,1) circle (0.1em) node[above]{};
        \draw[black!80,fill=black!80] (1,0,1) circle (0.1em) node[left]{};
        \begin{scope}[shift={(.12,.12,.12)}]
        \draw [thick,black!80,line join=round] (1,1,0)--(0,1,1);
        \fill [white!40,opacity=0.8] (1,1,0)--(1,1,1)--(0,1,1)--(1,0,1)--cycle;
        \draw [thick,black!80,line join=round] (1,1,0)--(1,0,1)--(0,1,1);
        \draw [thick,black!80,line join=round] (1,1,1)--(1,1,0);
        \draw [thick,black!80,line join=round] (1,1,1)--(1,0,1);
        \draw [thick,black!80,line join=round] (1,1,1)--(0,1,1);
        \draw[black!80,fill=black!80] (1,1,0) circle (0.1em) node[below]{};
        \draw[black!80,fill=black!80] (0,1,1) circle (0.1em) node[above]{};
        \draw[black!80,fill=black!80] (1,0,1) circle (0.1em) node[left]{};
        \draw[black!80,fill=black!80] (1,1,1) circle (0.1em) node[above right]{};
        \end{scope}
        \end{tikzpicture}
        \caption*{$\mathcal{T}_+$}
        \end{figure}
    \end{minipage}
    ~
    \begin{minipage}{0.3\textwidth}
        \begin{figure}[H]
        \centering
        \tdplotsetmaincoords{65}{45}
        \begin{tikzpicture}[scale=1.5,tdplot_main_coords]
        \begin{scope}[shift={(.4,.1,.15)}]
        \draw [thick,black!80,line join=round] (1,1,0)--(0,1,1);
        \fill [white!40,opacity=0.8] (0,0,0)--(0,1,1)--(1,1,1)--(1,1,0)--cycle;
        \draw [thick,black!80,line join=round] (0,0,0)--(1,1,1);
        \draw [thick,black!80,line join=round] (0,0,0)--(0,1,1);
        \draw [thick,black!80,line join=round] (0,0,0)--(1,1,0);
        \draw [thick,black!80,line join=round] (1,1,0)--(1,1,1)--(0,1,1);
        \draw[black!80,fill=black!80] (0,0,0) circle (0.1em) node[below]{};
        \draw[black!80,fill=black!80] (1,1,0) circle (0.1em) node[above]{};
        \draw[black!80,fill=black!80] (0,1,1) circle (0.1em) node[left]{};
        \draw[black!80,fill=black!80] (1,1,1) circle (0.1em) node[left]{};
        \end{scope}
        \fill [white!40,opacity=0.8] (0,0,0)--(1,1,1)--(0,1,1)--cycle;
        \draw [thick,black!80,line join=round] (0,0,0)--(1,1,1);
        \draw [thick,black!80,line join=round] (0,0,0)--(1,0,1);
        \draw [thick,black!80,line join=round] (0,0,0)--(0,1,1);
        \draw [thick,black!80,line join=round] (1,1,1)--(1,0,1)--(0,1,1)--cycle;
        \draw[black!80,fill=black!80] (0,0,0) circle (0.1em) node[below]{};
        \draw[black!80,fill=black!80] (0,1,1) circle (0.1em) node[above]{};
        \draw[black!80,fill=black!80] (1,0,1) circle (0.1em) node[left]{};
        \draw[black!80,fill=black!80] (1,1,1) circle (0.1em) node[left]{};
        \begin{scope}[shift={(.3,-.1,-.1)}]
        \draw [thick,black!80,line join=round] (0,0,0)--(1,1,1);
        \fill [white!40,opacity=0.8] (0,0,0)--(1,0,1)--(1,1,1)--(1,1,0)--cycle;
        \draw [thick,black!80,line join=round] (0,0,0)--(1,0,1);
        \draw [thick,black!80,line join=round] (0,0,0)--(1,1,0);
        \draw [thick,black!80,line join=round] (1,1,1)--(1,0,1)--(1,1,0)--cycle;
        \draw[black!80,fill=black!80] (0,0,0) circle (0.1em) node[below]{};
        \draw[black!80,fill=black!80] (1,1,0) circle (0.1em) node[above]{};
        \draw[black!80,fill=black!80] (1,0,1) circle (0.1em) node[left]{};
        \draw[black!80,fill=black!80] (1,1,1) circle (0.1em) node[left]{};
        \end{scope}
        \end{tikzpicture}
        \caption*{$\mathcal{T}_-$}
        \end{figure}
    \end{minipage}
    \end{center}

    \noindent In particular the entries of $w_\A$ sum to zero. 
    We can furthermore check that they encode the unique affine linear 
    relation in $\A$, indeed
    \[
    \bp_1 - \bp_2 - \bp_3 - \bp_4 + 2 \bp_5 = \bzero.
    \]
\end{ex}

\section{An implementable proof for the Lagarias--Ziegler Theorem}
\label{sec:LZproof}

In this section we give an alternative algorithmic proof to
Theorem~\ref{thm:LZ}.
Such algorithm will be then implemented for a complete enumeration of lattice 
polytopes with ``reasonably small'' volume and dimension, which is described in
the following sections.

The original proof given in \cite{LZ91} is divided in two parts, one proving
the result for simplices, another extending it to polytopes. The ``simplicial''
part of the result is easily deduced by putting the matrix of the vertices
of a simplex in a normal form.
This part of the proof, as given in \cite{LZ91}, can be easily implemented,
so we can use it as the first step of the algorithm, adding only some small
improvements (see Algorithm~\ref{alg:sim}). For the convenience of the reader we
quickly sketch the theoretical argument used.

\begin{lem}\label{lem:LZsimplices}
    There are finitely many equivalence classes of $d$-dimensional lattice 
    simplices having volume lower than a constant $k$.
\end{lem}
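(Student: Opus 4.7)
The plan is to reduce each lattice $d$-simplex of small volume to a canonical integer matrix, and then observe that only finitely many such canonical matrices exist.

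First I would fix an arbitrary vertex $\bv_0$ of a given simplex $S$ and translate by $-\bv_0$, which is an affine lattice equivalence; this lets me assume $\bv_0 = \bzero$. The remaining vertices $\bv_1, \ldots, \bv_d$ form the columns of a $d \times d$ integer matrix $M(S)$, and by the definition of normalized volume $\Vol(S) = |\det M(S)|$. In particular $\det M(S) \neq 0$ since $S$ is full-dimensional.

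Next I would apply the Hermite normal form theorem: there is a unique unimodular matrix $U \in GL_d(\Z)$ such that $H \coloneq U \cdot M(S)$ is upper triangular with strictly positive diagonal entries $h_{11}, \ldots, h_{dd}$ and with off-diagonal entries satisfying $0 \le h_{ij} < h_{ii}$ whenever $i < j$. Because $U \in GL_d(\Z)$, the simplex with vertex matrix $H$ (together with the origin) is unimodularly equivalent to $S$, so every equivalence class has a representative whose vertex matrix is in Hermite normal form.

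The finiteness then follows by direct enumeration. The diagonal entries are positive integers with $\prod_i h_{ii} = |\det H| = \Vol(S) < k$, so there are only finitely many admissible diagonal tuples. For each such tuple, the $\binom{d}{2}$ off-diagonal entries $h_{ij}$ each range over the finite set $\{0, 1, \ldots, h_{ii}-1\}$, giving only finitely many possible Hermite normal forms. This already bounds the number of equivalence classes.

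The only mildly subtle point, which would be the main thing to check carefully, is that the canonical form depends on both the choice of which vertex is translated to the origin and on the ordering of the remaining vertices as columns of $M(S)$. This ambiguity introduces at most $(d+1)!$ different Hermite normal forms per equivalence class of $S$, so it does not affect finiteness; for the algorithmic version of the argument (Algorithm~\ref{alg:sim}), one simply iterates over the orderings and keeps one representative per class.
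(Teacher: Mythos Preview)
Your proof is correct and follows essentially the same route as the paper: translate a vertex to the origin, put the resulting $d\times d$ integer matrix into Hermite normal form, and count the finitely many admissible forms using that the diagonal product equals $\Vol(S)<k$ while the off-diagonal entries are bounded by diagonal ones. One minor slip: with $H=U\cdot M(S)$ (row operations) the standard reduction gives $0\le h_{ij}<h_{jj}$ rather than $<h_{ii}$, which is also what the paper states; this is inconsequential for finiteness since every diagonal entry is already below $k$.
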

\begin{proof}
    Let $S$ be the $d$-dimensional lattice simplex with vertices
    $\bv_0$, $\bv_1$,$\ldots$, $\bv_{d+1}$.
    We can suppose $v_0$ to be the origin of the lattice.
    In this way $\vol(S)=|\det (M)|$, where $M$ is the $d \times d$ matrix
    whose columns are the vertices $\bv_1,\ldots,\bv_{d+1}$.
    We now take $M$ to the upper-triangular form (Hermite normal form)
    \[
        M' \coloneq
        \begin{pmatrix}
        v_{1,1}	&	v_{1,2}	&	\cdots	&	v_{1,d}	\\
    	    	&	v_{2,2}	&	\cdots	&	v_{2,d}	\\
        		&			&	\ddots	&	\vdots	\\
    	    	&			&			&	v_{d,d}	\\
         \end{pmatrix},
    \]
    where on the $j$-th column $ 0 \leq a_{i,j} < a_{j,j}$ for $i=1,\ldots,j$,
    and $a_{i,j}=0$ for $i=j+1,\ldots,d$. 
    Since the simplex $S'$ whose vertices are the origin and the columns of
    $M'$ is affinely equivalent to $S$, $\Vol(S)=\prod_{i=1}^{d}a_{i,i}$.
    Since all the entries of $M'$ are positive, there are finitely many
    possible values for all the entries $a_{i,i}$, and consequently, for all
    the entries of $M'$.
\end{proof}

From now on, our proof diverges from the original one.
In particular, we now focus our attention on the case of $d$-dimensional 
polytopes having $d+2$ vertices. We prove the result for this special case 
using the theory developed in the previous section and then we deduce the 
general case as a corollary.

\begin{prop}\label{prop:LZd+2}
    There are finitely many equivalence classes of $d$-dimensional lattice
    polytopes having $d+2$ vertices and volume lower than a constant $K$.
\end{prop}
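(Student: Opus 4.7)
The plan is to reduce to Lemma~\ref{lem:LZsimplices} by extracting a full-dimensional subsimplex of $P$ using $d+1$ of its vertices, and then bounding the position of the remaining one. Among the $d+2$ vertices of $P$, some subset of $d+1$ must be affinely independent: otherwise every $(d+1)$-subset would lie in a hyperplane, forcing all $d+2$ vertices into a common hyperplane and contradicting $\dim P = d$. Relabel so that $\bp_1,\dots,\bp_{d+1}$ are affinely independent and set $S := \conv(\bp_1,\dots,\bp_{d+1})$. Since $S\subseteq P$ we have $\Vol(S)\le\Vol(P)\le K$, so by Lemma~\ref{lem:LZsimplices} the simplex $S$ belongs to one of only finitely many unimodular equivalence classes; I fix a representative for each.

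Next I bound the last vertex $\bp := \bp_{d+2}\in\Z^d$. Write $\bp = \sum_{i=1}^{d+1} c_i \bp_i$ in affine coordinates, with $\sum c_i = 1$. For every $j\in\{1,\dots,d+1\}$, the simplex $S_j := \conv(\{\bp_1,\dots,\bp_{d+1},\bp\}\setminus\{\bp_j\})$ is contained in $P$, and a short determinant computation (replace the column $\bp_j - \bp_1$ of the matrix defining $\Vol(S)$ by $\bp - \bp_1 = \sum_i c_i(\bp_i-\bp_1)$ and use multilinearity) gives the identity $\Vol(S_j) = |c_j|\,\Vol(S)$. Since $\Vol(S_j)\le\Vol(P)\le K$ and $\Vol(S)\ge 1$, I conclude $|c_j|\le K$ for every $j$. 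Hence $\bp$ lies in the bounded region $\{\sum c_i\bp_i : \sum c_i=1,\ |c_i|\le K\}\subset\R^d$, which contains only finitely many lattice points. Combining the finitely many choices for $S$ with the finitely many admissible $\bp$ for each yields finitely many polytopes $P$ up to unimodular equivalence.

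The main difficulty is precisely this second step: a priori the extra vertex could drift arbitrarily far from $S$ while $P$ remains of bounded volume, so one must exhibit quantitative witnesses, which here are the $d+1$ auxiliary simplices $S_j \subseteq P$ obtained by exchanging $\bp$ for one of the simplex vertices; the degenerate case $c_j = 0$ corresponds to $\bp$ being coplanar with a facet of $S$ and causes no trouble. A stylistic alternative, more in the spirit of Section~\ref{sec:point_configurations}, would use Lemma~\ref{lem:encoding}: the volume vector $w_\A$ of $\A=\{\bp_1,\dots,\bp_{d+2}\}$ satisfies $\sum w_i = 0$ with positive part summing to $\Vol(P)\le K$, leaving only finitely many choices of $w_\A$, and the preceding proposition recovers $\A$ from $w_\A$ up to real unimodular affine equivalence (and up to $\Z$-equivalence when $\gcd(w_\A) = 1$); the non-trivial gcd case would then require a separate, finite analysis bounding the index of $\Z\langle\A-\bp_1\rangle$ in $\Z^d$.
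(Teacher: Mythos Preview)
Your proof is correct, and it is in fact the same computation the paper performs, just in different clothing. The identity $\Vol(S_j) = |c_j|\,\Vol(S)$ says precisely that the entries $w_j$ of the volume vector $w_\A$ (in the sign convention of \eqref{eq:volumevector}) satisfy $|w_j| = \Vol(S_j)$ and $|w_{d+2}| = \Vol(S)$, so your bound $|c_j|\le K$ is exactly the paper's bound on the volume-vector entries. The paper then invokes Lemma~\ref{lem:encoding} to say that the finitely many possible volume vectors determine the affine dependence, hence $\bp$, once $S$ is fixed; you instead locate $\bp$ directly via $|c_j|\le K$. Both arguments first fix $S$ via Lemma~\ref{lem:LZsimplices} and then pin down the extra vertex, and what you sketch as the ``stylistic alternative'' in your final paragraph is the paper's actual route. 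Your worry about the $\gcd$ case there is unnecessary: since $S$ is already fixed, the dependence relation determines $\bp$ uniquely as a point of $\R^d$, so no sublattice-index analysis is needed.
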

\begin{proof}
    Let $P$ be a lattice polytopes with $d+2$ vertices such that $\Vol(P) < K$.
    We call $\A$ the point configuration given by the vertices of $P=P_\A$, it
    has corank one.
    Let $w_\A$ be the volume vector of $\A$.
    Since the volume of $P$ is bounded by $K$, the sum of the positive entries
    of $w_\A$ is $K$ at most.
    Similarly, the sum of the negative entries in $w_\A$ is $-K$ at least.
    In particular there are finitely many possible volume vectors vectors that
    can be the volume vector of $\A$.
    By Lemma~\ref{lem:encoding} the previous statement means there are
    finitely many possible dependence relations which can be (up to
    multiplication by a scalar factor) the only dependence relation on $\A$.
    This proves that, if $S$ is any $d$-dimensional lattice simplex with
    $\Vol(S) < K$, then that the set 
    \[
        \left\{ \bp \in \Z^d : \Vol(\conv(S \cup \{\bp\})) \leq K \right\}
    \]
    is finite. 
    Finally, we note that $P$ is completely determined by the choice
    of $d+1$ affinely independent and ordered vertices, plus the unique linear
    relation among its $d+2$ vertices.
    The convex hull of $d+1$ affinely independent vertices of $P$ is a 
    $d$-dimensional simplex having normalized volume strictly smaller than $K$.
    By Lemma~\ref{lem:LZsimplices}, there are finitely many such simplices.
\end{proof}

From this we are able to prove Lagarias--Ziegler Theorem.

\begin{proof}[Proof of Theorem~\ref{thm:LZ}]
    A $d$-dimensional lattice polytope $P$ of volume $\Vol(P) \leq K$ has
    at most $K+d$ lattice points. This can be immediately deduced using some 
    basic Ehrhart Theory (see Proposition~\ref{prop:h^*_basic}).
    In particular, its number of vertices $n$ is at most $K+d$.
    Suppose the vertices of $P$ are ordered such that
    $\bv_0,\ldots,\bv_d$ are affinely independent.
    Then
    \[
        P= \conv \left( \bigcup_{i=d+2}^n P_i \right),
    \]
    where $P_i \coloneq \conv (\bv_0,\ldots,\bv_d,\bv_i)$ with $d+2 \leq i
    \leq n$.
    For each $i$, $P_i$ is a $d$-dimensional polytope with $d+2$ vertices,
    while $S\coloneq \conv(\bv_0,\ldots,\bv_d)$ is a $d$-dimensional simplex.
    Since $\Vol(S) \leq \Vol(P_i) \leq \Vol(P) \leq K$, we conclude by 
    Lemma~\ref{lem:LZsimplices} and Proposition~\ref{prop:LZd+2}.
\end{proof}

\section{Implementation}
\label{sec:implementation}

In this section we describe the implementation of the results described in the 
previous section. Note that both the proofs of Theorem~\ref{thm:LZ} for 
simplices and for polytopes admit a straightforward algorithmic implementation.
Anyway, in order to have feasible running times, we are going to optimize the
algorithms with some careful tweaking.
Results of such implementation will be described in Section~\ref{sec:results}.

We denote by $\Pol_V^d$ and $\Sim_V^d$ the sets of $d$-dimensional polytopes
and simplices having normalized volume $V$.
Note that, as usual, these sets are considered up to unimodular equivalence.
Computationally speaking, this is not a problem: each polytope can be indeed 
put in a normal form, and $\Pol_V^d$ can be thought as the set of these forms.

Algorithm~\ref{alg:sim} fully enumerates all the elements of $\Sim_V^d$.
To speed things up, we can ``recycle'' the enumeration of $\Sim_V^{d-1}$, being
the case $\Sim_V^1$ trivial.

\begin{algorithm}[htp]
    \SetKwInOut{Input}{input}\SetKwInOut{Output}{output}
    \Input{$\Sim_V^{d-1}$}
    \Output{$\Sim_V^d$}
    \BlankLine
    $\Sim_V^d \longleftarrow \varnothing$\;   
    \For{$v$ such that $v|V$}{
        $p_d \longleftarrow \tfrac{V}{v}$\;
        \For{$F \in \Sim_{V}^{d-1}$}{
            \For{$p_1,\ldots,p_{d-1} \in [0,p_d-1]^{d-1}$}{
                $\bp \longleftarrow (p_1,\ldots,p_{d-1},p_d)$\;
                $S \longleftarrow \conv( (F \times 0) \cup \bp )$\;
                $\Sim_V^d \longleftarrow \Sim_V^d \cup \{S\}$\;
            }
        }
    }
    \caption{
        \label{alg:sim}
        The algorithm for the enumeration of all the elements of $\Sim_V^d$
    }
\end{algorithm}

We now discuss the implementation for the complete enumeration of the elements 
of $\Pol_V^d$.
We denote by $\Pol_{\leq K}^d$ the set of all $d$-dimensional lattice polytopes
of normalized volume at most $K$, i.e.
$\Pol_{\leq K}^d \coloneq \bigcup_{V = 1}^K \Pol_V^d$. Similarly, we set
$\Sim_{\leq K}^d \coloneq \bigcup_{V = 1}^K \Sim_V^d$.
The algorithm works as follows. 
The simplices of $\Sim_{\leq K}^d$ are used as starting objects for the
enumeration.
The possible volume vectors of point configuration of cardinality $d+2$ are 
then calculated and used to iteratively add new vertices to the simplex.
This is possible because the volume vector of a point configurations with 
$d+2$ points encodes the unique affine dependence among them 
(Lemma~\ref{lem:encoding}).
In order to optimize the implementation, the volume vectors have to be chosen 
carefully.
We use the sign-changed definition of the volume vector given in 
\eqref{eq:volumevector}. We denote by $\W_K^d$ the set
{\small
\[
    \W_K^d \coloneq \left\{ (w_1,\ldots,w_{d+2}) \in [-K,K]^{d+2} : 
    \sum_{w_i > 0} w_i = - \sum_{w_i < 0} w_i < K \right\} \cap \Z^{d+2}.
\]
}
It contains all the possible volume vectors of point configurations of $d+2$ 
points whose convex hull has normalised volume at most $K$.
Once a simplex $S$ is fixed, we can add new vertices to it using only the 
volume vectors having $\Vol(S)$ as the first entry.
To make the computation even faster one can assume that $\Vol(S)$ is, in
absolute value, the highest entry in the volume vector. 
This means that each polytope would be built starting only from the biggest 
simplex it contains. 
Using the volume vectors to determine the growing step of the classification 
algorithm also provides a handy way to deal with symmetries.
Algorithm~\ref{alg:pol} requires $\Sim_{\leq K}^d$ and $\W_K^d$ as inputs, 
and returns $\Pol_{\leq K}^d$ as output. $\Sim_{\leq K}^d$ is obtained by
iterating Algorithm~\ref{alg:sim} for values of $V$ ranging from 1 to $K$, 
while $\W_K^d$ can be trivially computed.
Given a simplex $S=\conv(\bv_0,\ldots,\bv_{d}) \in \Sim_{\leq K}^d$ and a 
volume vector $w=(w_1,\ldots,w_{d+2}) \in \W_K^d$ such that $w_{d+2}=\Vol(S)$,
we define $\bp_{S,w}$ to be the point of $\R^d$ such that $w$ is the volume
vector of the point configuration $\{\bv_0,\ldots,\bv_{d},\bp_{S,w}\}$. 
Thanks to Lemma~\ref{lem:encoding}, $\bp_{S,w}$ is uniquely determined, indeed 
\[
    \bp_{S,w} = \frac{- \sum_{i=0}^{d} w_{i+1} \bv_i  }{\Vol(S)}. 
\]
Note that, in general, $\bp_{S,w}$ is not a lattice point.
At every iteration of Algorithm~\ref{alg:pol}, all the points $\bp_{S,w}$ that
are lattice points are stored in a temporary variable $\X_S$.
After that, the elements of $\X_S$ are used to ``grow'' $S$ in all the possible
ways.
This is done in the second part of the main loop.
A variable called $s$ is used to count how many iterations the growing process 
needs.
One can think to $s$ as the variable counting the $\emph{size}$ of the lattice 
polytopes, i.e. their number of lattice points.
In particular, at every iteration over $s$, only the lattice polytopes of size
$s$ will be processed and ``grown'' by adding one lattice point.
Consider that this process ramifies and becomes slower, indeed starting
from a single simplex, adding different points obviously generates different
polytopes. 
In order to minimize the number of iterations we use some Ehrhart Theory,
which guarantees a simple structure for the polytopes
for which the number of lattice points is maximal with respect to the volume.
This is done via Lemma~\ref{lem:degree1}. In order to state it correctly, we 
need some definitions.

Given a $d$-dimensional lattice polytope $P \subset \R^d$, we define the 
\emph{lattice pyramid} $\Pyr(P)$ as the $(d+1)$-dimensional polytope
\[
    \Pyr(P) \coloneq \conv(P \times \{0\} \cup \{(0,\ldots,0,1)\})
    \subset \R^{d+1}.
\]
Moreover, we say that a $d$-dimensional lattice polytope $P$ is an 
\emph{exceptional simplex} if $P$ can be obtained via the $(d-2)$-fold 
iterations of the lattice pyramid construction over the second dilation of a 
unimodular simplex, that is,
\[
    P \cong \Pyr(\cdots(\Pyr(\conv((0,0),(2,0),(0,2))))\cdots).
\]
We say that a $d$-dimensional lattice polytope $P \subseteq \R^d$ is a \emph{Lawrence prism}
with \emph{heights} $a_0, \ldots, a_{d-1}$ if there exist nonnegative integers 
$a_0, \ldots , a_{d-1}$ such that
\[
    P \cong \conv (\{ \bzero , a_0 \be_d , \be_1 , \be_1 + a_1 \be_d , \ldots ,
    \be_{d-1}, \be_{d-1} + a_{d-1} \be_d \}),
\]
where $\be_1,\ldots,\be_d$ denote the standard basis of $\R^d$ 
\begin{lem}
    \label{lem:degree1}
    Let $P$ be a $d$-dimensional lattice polytope.
    Then $|P \cap \Z^d| \leq d + \Vol(P)$, with equality if and only if
    $P$ is either an exceptional simplex or a Lawrence prism.
\end{lem}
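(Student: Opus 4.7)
The plan is to express both $|P \cap \Z^d|$ and $\Vol(P)$ in terms of Ehrhart $\h$-coefficients, reduce the inequality to their non-negativity, and then invoke the classification of lattice polytopes of Ehrhart degree at most one to identify the extremal cases.

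First I would invoke Proposition~\ref{prop:h^*_basic}: writing the Ehrhart $\h$-polynomial as $\h_P(z) = \h_0 + \h_1 z + \cdots + \h_d z^d$, one has $\h_0 = 1$, $\h_1 = |P \cap \Z^d| - (d+1)$, $\Vol(P) = \sum_{i=0}^d \h_i$, and $\h_i \geq 0$ for every $i$ by Stanley's non-negativity theorem. Subtracting these identities gives
\[
    (d + \Vol(P)) - |P \cap \Z^d| \;=\; \sum_{i=2}^d \h_i \;\geq\; 0,
\]
which proves the inequality and shows that equality holds if and only if $\deg \h_P \leq 1$.

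The heart of the proof is then the classification of $d$-dimensional lattice polytopes with $\deg \h_P \leq 1$. This is exactly the Batyrev--Nill theorem on lattice polytopes of degree one: up to unimodular equivalence, such polytopes are either Lawrence prisms or iterated lattice pyramids over the second dilation of a unimodular triangle $2\D_2$. I would cite this classification as a black box, since reproving it would go well beyond the scope of the present section; the two families it produces are exactly the Lawrence prisms and exceptional simplices of the lemma statement.

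Finally, for the converse direction I would verify by a short direct computation that each of these named polytopes saturates the bound. A Lawrence prism with heights $a_0,\ldots,a_{d-1}$ has $\Vol(P) = \sum_i a_i$ (by slicing orthogonally to the base simplex $\conv(\bzero,\be_1,\ldots,\be_{d-1})$), and its lattice points lie exactly on the $d$ vertical edges over the vertices of that base, yielding $|P \cap \Z^d| = d + \sum_i a_i$. For an exceptional simplex the identities $|\Pyr(Q) \cap \Z^{d+1}| = |Q \cap \Z^d| + 1$ and $\Vol(\Pyr(Q)) = \Vol(Q)$ give an induction that reduces equality to the base case $2\D_2$, for which $|P \cap \Z^2| = 6 = 2 + 4$. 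The classification step is the main obstacle; everything else is bookkeeping with $\h$-coefficients and an edge count.
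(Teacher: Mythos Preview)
Your proposal is correct and follows essentially the same route as the paper: both reduce the inequality to the nonnegativity of $\sum_{i\geq 2}\h_i$ via Proposition~\ref{prop:h^*_basic}, identify the equality case as $\deg \h_P \leq 1$, and then invoke the Batyrev--Nill classification \cite{BN07} of degree-one polytopes as a black box. Your additional converse verification is not needed once that classification is stated as an ``if and only if'', but it does no harm.
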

\begin{proof}
    The first part of the lemma is also known as Blichfeldt's Theorem 
    \cite{Bli14}, but in this case it can be easily deduced with some basic
    Ehrhart Theory (see Proposition~\ref{prop:h^*_basic}). 
    A polytope for which the equality $|P \cap \Z^d| = d + \Vol(P)$ 
    is attained, must have $\h$-polynomial $\h_P(t) = 1 + (\Vol(P)-1) t$, 
    which has degree one. 
    The rest of the statement is exactly the characterization of lattice
    polytopes having $\h$-polynomial of degree one by Batyrev--Nill
    \cite{BN07}.
\end{proof}

Thanks to this lemma, we know that the variable $s$ can range between
$|S \cap \Z^d|$ and $K+d-1$. At every iteration over $s$  the algorithm selects
all the lattice polytopes of size $s$, it grows them into larger ones, which
are then stored into a set $\Q_S$. 
The union of all the $\Q_S$ for all the simplices $S \in \Sim_{\leq K}^d$ will
be the complete list of $d$-dimensional lattice polytopes having volume at most
$K$, except possibly some Lawrence prisms. Those are easy to classify and can
be added ``manually''  as the last step of the algorithm.

\begin{algorithm}[htp]
    \SetKwInOut{Input}{input}\SetKwInOut{Output}{output}
    \Input{$\Sim_{\leq K}^d$, $\W_K^d$}
    \Output{$\Pol_{\leq K}^d$}
    \BlankLine
    $\Pol_{\leq K}^d \longleftarrow \Sim_{\leq K}^d$\;   
    \For{$S \in \Sim_{\leq K}^d$}{
        $\X_S \longleftarrow \varnothing$\;
        $\Q_S \longleftarrow \{S\}$\;
        \For{$w=(w_1,\ldots,w_{d+2}) \in \W_K^d$ such that $w_{d+2}=\Vol(S)$}{
            \If{$\bp_{S,w} \in \Z^d$}{
                $\X_S \longleftarrow \X_S \cup \{ \bp_{S,w}\}$\;
            }
        }
        \For{$s \in [|S \cap \Z^d|,K+d-1]$}{
            \For{$P \in \Q_S$ such that $|P \cap \Z^d| = s$}{
                \For{$\bp \in \X_S$}{
                    $Q \longleftarrow \conv(P \cup \{\bp\})$\;
                    \If{$\Vol(Q) \leq K$}{
                        $\Q_S \longleftarrow \Q_S \cup \{Q\}$\;
                    }
                }
            }    
        }
        $\Pol_{\leq K}^d \longleftarrow \Pol_{\leq K}^d \cup \Q_S$\;
    }
    $\Pol_{\leq K}^d \longleftarrow \Pol_{\leq K}^d \cup \{ P : \Vol(P)\leq K
    \mbox{ and $P$ is Lawrence prism} \}$\;
    \caption{
        \label{alg:pol}
        The algorithm for the complete enumeration of the elements of
        $\Pol_{\leq K}^d$
    }
\end{algorithm}

Beside the optimizations discussed in this section, other small improvements
above have been added to the actual implementations of Algorithms~\ref{alg:sim}
and \ref{alg:pol}.
Such expedients are obvious verifications, such as not trying to add point to 
lattice polytopes of volume $K$ or not trying to add the same point twice, 
and are not reported in the pseudocode, in order to keep it essential.

\section{Results and comparison with existing classifications}
\label{sec:results}

In this section we discuss the results of the classification, and we compare it
with other existing ones. 
Algorithms~\ref{alg:sim} and \ref{alg:pol} have been implemented in {\sc Magma}
\cite{BCP972} on Intel Core i7-2600 CPU 3.40GHz. 
The total running time for all the classifications was roughly one year in 
total.

Note that the implementation can be easily parallelized and run on large
clusters, but this was beyond the resources of the author and the aims of the 
paper.
The implementation has therefore been performed in low dimensions (up to six)
and finding a compromise between a large enough volume and a fast enough 
running time.
The classifications are therefore far from any kind of computational 
limit, and, if there will be request, they can easily be pushed forward.

The two-dimensional case of the classification is not of particular interest, 
as lattice polygons up to 30 interior lattice points have been classified in
\cite{Cas12} and the \emph{hollow} ones, i.e. the ones without interior lattice 
points, can be easily described. Indeed, a hollow lattice polygon can either be
the convex hull of two lattice segments in two consecutive lines, or the
exceptional simplex. 
For completeness, and for making comparisons, this case has been computed 
anyway, but the computation has been stopped after a few hours.

Specifically, we fully enumerate the elements of $\Sim_{\leq K}$ and 
$\Pol_{\leq K}^d$ for the following couples
$d$ and $K$:
\begin{itemize}
    \item $d = 2$ and $K = 50$,
    \item $d = 3$ and $K = 36$,
    \item $d = 4$ and $K = 24$,
    \item $d = 5$ and $K = 20$,
    \item $d = 6$ and $K = 16$.
\end{itemize}

Having in mind applications to Ehrhart Theory, the enumeration of 
$\Sim_{\leq K}^3$ has been performed for $K=1000$ and discussed in 
Section~\ref{sec:3-simplices}.

We report here the outcome of the implementation of Algorithm~\ref{alg:pol}, 
but we discuss applications and implications in the following sections.

\begin{thm}
    \label{thm:result}
    Up to unimodular equivalence there are 
    \begin{itemize}
        \item $408 \, 788$ two-dimensional lattice polytopes having
        	  volume at most 50.
        \item $6 \, 064 \, 034$ three-dimensional lattice polytopes having
	          volume at most 36.
        \item $989 \, 694$ four-dimensional lattice polytopes having
	          volume at most 24.
        \item $433 \, 273$ five-dimensional lattice polytopes having
	          volume at most 20.
        \item $117 \, 084$ six-dimensional lattice polytopes having
	          volume at most 16.
    \end{itemize}
    Their distribution according to their volume, can be read from the
    tables in Appendix~\ref{app:tables}.
\end{thm}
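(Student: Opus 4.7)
The plan is to establish each count by executing Algorithm~\ref{alg:pol} on the prescribed pair $(d,K)$ and tallying the output. Correctness of the algorithm has already been argued in Section~\ref{sec:implementation}: it is a faithful translation of the constructive proof of Theorem~\ref{thm:LZ}, with Lemma~\ref{lem:degree1} justifying both the termination index $s=K+d-1$ and the separate bookkeeping of Lawrence prisms. Hence the theorem reduces to carrying out a finite computation once the input data $\Sim_{\leq K}^d$ and $\W_K^d$ have been produced.

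First I would assemble the simplex library by iterating Algorithm~\ref{alg:sim} in ascending dimension, starting from the trivial case $\Sim_{\leq K}^1$ (one lattice segment per target volume) and using the output at each stage as input to the next. The set $\W_K^d$ is enumerated directly from its defining inequalities by a straightforward integer-partition recursion on $[-K,K]^{d+2}$. With these two objects in hand I would run Algorithm~\ref{alg:pol} for each of the five pairs $(d,K)$ in the statement. Every candidate polytope produced in the inner loop is reduced to a canonical (affine normal) form before being inserted into $\Q_S$, so that polytopes arising from distinct seed simplices are identified as unimodularly equivalent; the union $\Pol_{\leq K}^d=\bigcup_S \Q_S$ is then augmented by the Lawrence prisms of volume at most $K$, as prescribed in the last line of Algorithm~\ref{alg:pol}. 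Taking cardinalities yields the five numbers claimed, and the per-volume breakdowns in Appendix~\ref{app:tables} are produced as a by-product.

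The main obstacle is not conceptual but computational: the number of seed simplices and candidate volume vectors grows rapidly, and a naive implementation is infeasible in the reported ranges. To keep run times within the budget described at the beginning of this section, several optimizations are essential: (i) breaking the symmetry of the volume vector by restricting to $w$ with $w_{d+2}=\Vol(S)$ equal in absolute value to the largest entry of $w$, so that each polytope is grown only from the largest simplex it contains; (ii) using Lemma~\ref{lem:degree1} both to bound the size iteration and to remove exceptional simplices and Lawrence prisms from the inner loop in favor of an explicit final append; and (iii) caching canonical forms as hash keys when updating $\Q_S$ and $\Pol_{\leq K}^d$ so that equality tests are constant-time. Implementing these in {\sc Magma} on the hardware described brings the total running time into the stated order of magnitude.

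Finally I would cross-check the output against pre-existing classifications on their regions of overlap, for instance the polygons of \cite{Cas12}, the three-dimensional polytopes with few interior lattice points of \cite{Kas10,BK16,BS16b,BS17}, the reflexive polytopes of \cite{KS98,KS00}, and the smooth Fano lists of \cite{Obr07,LP08}. Agreement on these subsets (and, where applicable, identification of the discrepancies mentioned in the abstract) constitutes an external consistency check that, together with the internal volume-stratification check, validates the totals reported.
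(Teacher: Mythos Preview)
Your proposal is correct and follows essentially the same approach as the paper: Theorem~\ref{thm:result} is not proved in the traditional sense but is reported as the outcome of implementing Algorithms~\ref{alg:sim} and~\ref{alg:pol} in {\sc Magma} for the listed pairs $(d,K)$, with the optimizations and cross-checks you describe matching those discussed in Sections~\ref{sec:implementation} and~\ref{sec:results}. The only minor discrepancy is in the specific external classifications used for validation---the paper checks against \cite{Kas10,BK16,BS17,AWW11,AKW17,BHH15,Lor10,Lun13} rather than the reflexive or smooth Fano lists you mention---but this does not affect the substance of the argument.
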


The resulting database is available at \url{https://github.com/gabrieleballetti/small-lattice-polytopes}.
An immediate application of a classification result such this, is to
double check existing classifications.
For example Theorem~\ref{thm:result} has already been used to correct mistakes
in a recent classification result by Hibi and Tsuchiya.
In \cite{HT17} they give a characterization for all the lattice polytopes of 
any dimension, having normalized volume lower than or equal to four.
By comparing their results with the classification above it turned out that 
some polytopes in dimension four and five were missing from their lists.
The current version has been corrected with the missing polytopes.

In dimension three our classification has large intersections with several
existing ones. In particular we checked that it agrees with:
\begin{enumerate}
    \item the classification of three-dimensional lattice polytopes with
          one interior lattice point \cite{Kas10};
    \item the classification of three-dimensional lattice polytopes with
          two interior lattice points \cite{BK16};
    \item the classification of three-dimensional lattice polytopes with
          width larger than one and having up to eleven lattice points
          \cite{BS17}.
\end{enumerate}
Additionally, our classification fully contains:
\begin{enumerate}
    \setcounter{enumi}{3}
    \item the 12 hollow three-dimensional lattice polytopes which are maximal 
          up to inclusion classified in \cite{AWW11,AKW17};
    \item the classification of three-dimensional smooth polytopes (see 
          Section~\ref{sec:smooth} for a definition), having up to 16 lattice 
          points, which have been classified in several steps
          \cite{BHH15,Lor10,Lun13}.
\end{enumerate}

The fact that our classification contains the 12 hollow three-dimensional 
lattice polytopes which are maximal up to inclusion of \cite{AWW11,AKW17} is not
surprising.
On the contrary, the classification of three dimensional polytopes was pushed up
to normalized volume 36 in order to compare the two classifications.
Indeed the largest hollow maximal three-dimensional polytope has volume 36
(this is interesting on its own, as it seems to suggest a ``hollow case'' of 
Conjecture~\ref{conj:volume_bound}). 
Anyway, we remark that our classification does not give an independent proof
of this fact.
On the other hand, it is remarkable how Algorithm~\ref{alg:pol} seems to be
faster than the algorithms to classify smooth polytopes used in
\cite{BHH15,Lor10,Lun13}. 

\section{Smooth polytopes}
\label{sec:smooth}

A natural property in the study of lattice polytopes (especially when it is 
motivated by toric geometry) is the smoothness.
A lattice polytope $P$ in $\R^d$ is called \emph{smooth} if it is simple and 
if its primitive edge directions at every vertex form a basis of $\Z^d$.
Sometimes, smooth polytopes are also called \emph{Delzant}.
The word ``smooth'' comes indeed from the toric varieties realm: a lattice 
polytope is smooth if and only if the associated projective toric variety is 
smooth (see \cite[Section~2.1]{Ful93}).

The most important open problem regarding smooth polytopes is the so called
Oda's conjecture, for which we need to introduce the notion of Integer 
Decomposition Property. 
We say that a $d$-dimensional lattice polytope $P$ is \emph{IDP}, or has the
Integer Decomposition Property, if for every integer $n \geq 1$ and every
lattice point $\bp \in nP \cap \Z^d$ there are lattice points $\bp_1, 
\ldots, \bp_n \in P \cap \Z^d$ such that $\bp = \bp_1 + \cdots + \bp_n$.
Polytopes having this property are often referred to as \emph{integrally 
closed}, but one should not confuse them with \emph{normal} polytopes, which
are those which are IDP when considered as lattice polytopes with respect to
the lattice affinely spanned by their lattice points.
Being IDP is also a very natural property (one can think of it as a discrete
counterpart of convexity), which is of interest in algebraic geometry, 
combinatorics, commutative algebra and optimizations.

In the nineties Oda \cite{Oda08} formulated several problem on Minkowski sums
of lattice polytopes.
One of them is nowadays known in the following form as Oda's Conjecture.

\begin{conj}[Oda's conjecture]
    \label{conj:oda}
    Every smooth lattice polytope is IDP.
\end{conj}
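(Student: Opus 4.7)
The plan is to attack Oda's conjecture by strengthening it: rather than producing a decomposition of each lattice point in $nP$ directly, I would try to show that every smooth lattice polytope admits a unimodular triangulation, which trivially implies IDP. This reduction is natural because smoothness is already a local triangulation statement at each vertex (the primitive edge directions form a $\Z$-basis, so the vertex cone is unimodular), and one hopes these local pieces glue.

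The first concrete step is induction on the dimension $d$. For $d=1,2$ the conjecture is classical: smooth segments are unit segments and smooth polygons admit unimodular triangulations by pulling. For the inductive step, pick a vertex $v$ of $P$ with primitive edge vectors $\be_1,\ldots,\be_d$ forming a $\Z$-basis, and consider the facets of $P$ not containing $v$. Each such facet is itself a smooth polytope (smoothness is inherited by facets at every vertex not on a deleted edge), so by induction each admits a unimodular triangulation. Pulling at $v$ then yields a candidate triangulation of $P$. Simultaneously, I would leverage the structure theorem for smooth polytopes with at most $3d-4$ lattice points (Theorem~\ref{thm:smooth_few_points}), which already produces quadratic (hence unimodular) triangulations, as the base of a "small polytope" case, and try to bootstrap to larger lattice point counts by a peeling argument: remove a vertex, invoke the hypothesis on the smaller polytope, and glue.

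The hard part, and the reason Oda's conjecture has resisted for over thirty years, is the gluing. Smoothness is a purely local condition at the vertices, while being IDP or admitting a unimodular triangulation is a global condition involving all lattice points in all dilates. The pulling construction above is only guaranteed to produce unimodular simplices incident to $v$; the induced subdivisions of adjacent facets need not match on their common ridges, and even if one chooses compatible facet triangulations the resulting $d$-simplices can fail to be unimodular once they stretch across the interior of $P$. Moreover, interior lattice points (which exist generically for large smooth polytopes) are invisible to the vertex-based local data, so any serious attempt must find a way to certify unimodularity globally from vertex information alone.

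Consequently, realistic progress through this plan would require either a new global invariant of smooth polytopes that controls the interior (something finer than the $\h^*$-vector and sensitive to smoothness), or a structural classification extending the present paper's $3d-4$ bound dramatically. Absent such a result, the proposal reduces to what is already known: Oda's conjecture is verified in the cases covered by unimodular triangulability (products of unimodular simplices, Nakajima polytopes, certain toric fibrations) and, by direct inspection through the database produced in Section~\ref{sec:results}, for all smooth polytopes of small volume and dimension.
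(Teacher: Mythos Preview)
The statement is a \emph{conjecture}, and the paper does not prove it; it only reports a computational verification on the finitely many smooth polytopes in the database (Theorem following Conjecture~\ref{conj:oda}). Your proposal is therefore not being compared to a proof in the paper---there is none---and your own final paragraph essentially concedes that the plan does not go through. That assessment is correct, but several of the intermediate steps deserve to be flagged more sharply.

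First, you are attempting to prove the strictly stronger statement that every smooth polytope admits a unimodular triangulation; this is itself an open problem (the B\"ogvad-type conjecture mentioned in the paper just after Theorem~\ref{thm:smooth_few_points}), so the reduction buys nothing. Second, the inductive pulling argument breaks at the point you identify, but also earlier: even granting unimodular triangulations of the facets opposite $v$, the cone from $v$ over a unimodular $(d-1)$-simplex in such a facet is a $d$-simplex whose volume equals the lattice distance from $v$ to the facet hyperplane, and there is no reason for this distance to be $1$. Third, the ``peeling'' step is ill-posed: deleting a vertex of a smooth polytope does not in general yield a smooth (or even simple) polytope, so the inductive hypothesis cannot be invoked. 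Finally, a small slip in the base case: in dimension one every lattice segment is smooth, not just unit segments; the conclusion there is still trivially true, but for a different reason than stated.
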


This is seemingly innocuous statement is actually open even in low dimensions, 
and in many stronger forms.

In \cite{BHH15} Bogart et al. prove that for every nonnegative integers $d$ and
$n$ there are, modulo unimodular equivalence, there are only finitely many 
$d$-dimensional smooth polytopes with $n$ lattice points.
This can be seen as an evidence for the validity of Conjecture~\ref{conj:oda}, 
as is would follow from it as a corollary (it is indeed easy to verify that 
there are finitely many IDP polytopes once dimension and number of lattice
points are fixed).
As an application of their result they classify smooth three-dimensional
polytopes having up to 12 lattice points (see also \cite{Lor10}). 
This was later extended by Lundman \cite{Lun13} who classified all the lattice
polytopes having up to 16 lattice points.

\begin{thm}[{\cite[Theorem~1]{Lun13}}]
    Up to unimodular equivalence there exist exactly 103 smooth 
    three-dimensional lattice polytopes $P \subseteq \R^3$ such that 
    $|P \cap \Z^3| \leq 16$.
\end{thm}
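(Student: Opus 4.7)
The plan is to deduce the statement from the large-scale enumeration in Theorem~\ref{thm:result} by filtering the database for smoothness. The key missing ingredient is a volume bound placing the family in question inside the enumerated range.

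First I would establish that any smooth three-dimensional lattice polytope $P$ with $|P \cap \Z^3| \leq 16$ satisfies $\Vol(P) \leq 36$, so that $P$ must appear in $\Pol_{\leq 36}^3$. One route is to track the constants in the finiteness argument of Bogart, Haase and Hering \cite{BHH15}: their proof analyzes the unimodular vertex cones of a smooth polytope, and for $d=3$ and $n \leq 16$ the estimate can be made entirely explicit. A more Ehrhart-theoretic alternative starts from $\h_P(z) = 1 + h^*_1 z + h^*_2 z^2 + h^*_3 z^3$, where $h^*_1 = |P \cap \Z^3| - 4 \leq 12$ and $\Vol(P) = 1 + h^*_1 + h^*_2 + h^*_3$; then one combines Stanley's non-negativity of the $h^*$-coefficients with further inequalities relating $h^*_2, h^*_3$ to $h^*_1$ (distinguishing the hollow case from the case $h^*_3 > 0$) and with the strong local rigidity imposed by smoothness at each vertex.

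Second, with the bound in hand, I would iterate through the $6\,064\,034$ three-dimensional polytopes of volume at most $36$ produced by Theorem~\ref{thm:result} and discard every non-smooth one. Smoothness is a purely local vertex condition: each vertex must be incident to exactly three edges (simplicity), and the three primitive integer edge directions at the vertex must form a $\Z$-basis of $\Z^3$, equivalently their matrix must have determinant $\pm 1$. Both checks are routine given the vertex--edge incidence data already stored in the database. A second pass retains only those smooth polytopes with $|P \cap \Z^3| \leq 16$, and the count should come out to exactly $103$, agreeing with \cite{Lun13}.

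The principal obstacle is the volume bound itself. The classification of Theorem~\ref{thm:result} has a hard cutoff at $\Vol = 36$ in dimension three, so a single smooth polytope with at most $16$ lattice points but volume greater than $36$ would invalidate the algorithmic filter. Making the Bogart--Haase--Hering estimate quantitative, or squeezing enough out of the Ehrhart inequalities in dimension three to force $h^*_2 + h^*_3 \leq 23$ whenever $h^*_1 \leq 12$ and $P$ is smooth, is where the genuine work lies. Once this bound is secured, the remainder of the proof is a routine computation already performed by the machinery of Sections~\ref{sec:LZproof} and \ref{sec:implementation}.
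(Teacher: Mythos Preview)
The paper does not prove this statement; it is quoted from \cite{Lun13}. What the paper does do (Theorem~\ref{thm:smooth-3p} and item~(5) in Section~\ref{sec:results}) is confirm that the database $\Pol_{\leq 36}^3$ contains exactly these 103 smooth polytopes. But the reason this confirmation is complete is stated explicitly in the text: ``The largest polytope in Lundman's classification has normalized volume 23.'' In other words, the paper learns the volume cutoff \emph{from} Lundman's output, not from an a~priori inequality; the role of the database is to cross-check an existing classification, not to reprove it.

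Your proposal, by contrast, tries to make the deduction self-contained by establishing $\Vol(P)\le 36$ independently, and you correctly flag this as the crux. As written, though, neither of your two routes closes the gap. The Ehrhart route only gives you $h^*_3\le h^*_1\le 12$ from Proposition~\ref{prop:h^*-ineq}; there is no proven inequality in the paper (or in the standard literature) that bounds $h^*_2$ above by a function of $h^*_1$ alone, and the relevant statement in Conjecture~\ref{conj:main} is both conjectural and in terms of $h^*_3$, not $h^*_1$. The \cite{BHH15} route is more promising in principle, but their argument passes through an auxiliary bound on the number of facets and then through edge-length estimates, and making it yield $\Vol\le 36$ for $n\le 16$ is a nontrivial calculation you have not carried out; indeed Lundman's own paper needed additional case analysis beyond \cite{BHH15} precisely because the general bound is too coarse. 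Until that bound is actually produced, your filtering argument is circular: you would be using Lundman's classification to know that nothing escapes the filter.
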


The largest polytope in Lundman's classification has normalized volume 23. 
As a consequence the following result enlarges the current census of
``small'' three-dimensional polytopes.

\begin{thm} 
    \label{thm:smooth-3p}
    Up to unimodular equivalence there exist exactly $1 \, 588$ smooth
    three-dimensional lattice polytopes $P \subseteq \R^3$ such that 
    $\Vol(P) \le 36$. 
    The 103 polytopes having at most 16 lattice points are a subset of them.
    The distribution of smooth three-dimensional polytopes by their volume
    is summarized in Table~\ref{tab:smooth-3p}.
\end{thm}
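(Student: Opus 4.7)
The proof is a direct computational corollary of Theorem~\ref{thm:result}: the complete list $\Pol_{\leq 36}^{3}$ of three-dimensional lattice polytopes of normalized volume at most $36$ has already been produced by Algorithm~\ref{alg:pol}, so the plan is simply to filter this list by the smoothness predicate, count the survivors, stratify them by volume, and cross-check the output against Lundman's published classification. Since smoothness is preserved under unimodular equivalence, and $\Pol_{\leq 36}^{3}$ contains one representative of every equivalence class, every smooth three-dimensional lattice polytope with $\Vol(P) \leq 36$ must appear exactly once in the filtered list.

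For each $P \in \Pol_{\leq 36}^{3}$ I would compute the vertex set $\ve(P)$ and the edge graph (obtainable from the face lattice routines in {\sc Magma}), then test the two defining properties of smoothness. First, simplicity: every vertex must be incident to exactly $3$ edges. Second, unimodularity at each vertex: if $v$ is adjacent to $v_1,v_2,v_3$, then the primitive lattice vectors $u_i = (v_i - v)/\gcd(v_i - v)$ must satisfy $|\det(u_1\,u_2\,u_3)| = 1$. A polytope is smooth precisely when both tests pass at every vertex. Because each element of $\Pol_{\leq 36}^{3}$ is already stored in a canonical form, the filter can be applied in place with no further equivalence reduction, and the surviving $1\,588$ representatives can be sorted by $\Vol$ to produce Table~\ref{tab:smooth-3p}.

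It remains to confirm containment of Lundman's $103$ polytopes. Since the largest polytope in \cite{Lun13} has normalized volume $23 \leq 36$, each such polytope lies in the range of the enumeration; converting each to normal form and searching for it in the filtered list verifies the containment. The only substantive risk is implementation error, either in the underlying enumeration of Theorem~\ref{thm:result} or in the smoothness test, and this is the ``main obstacle'' in the weak sense that the correctness of the statement reduces to trust in the computation. That risk is mitigated by the independent agreement with \cite{BHH15,Lor10,Lun13} and with the other reference classifications recalled in Section~\ref{sec:results}, which provide multiple overlapping checks on both the database and the predicate being applied.
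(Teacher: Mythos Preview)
Your proposal is correct and matches the paper's approach: the theorem is presented there without a separate proof, as a direct computational corollary of Theorem~\ref{thm:result} obtained by filtering $\Pol_{\leq 36}^{3}$ for smoothness, with the containment of Lundman's list justified by the observation (stated just before the theorem) that the largest polytope in \cite{Lun13} has normalized volume $23 \leq 36$. Your write-up simply makes the filtering procedure and the cross-checks explicit.
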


This highlights how Algorithm~\ref{alg:pol} seems to be more efficient
than the ones used to classify smooth polytopes in \cite{BHH15,Lor10,Lun13},
althought it is not shaped to deal with smooth polytopes.
Moreover, Algorithm~\ref{alg:pol} can be freely used in higher dimension.
In particular we can easily obtain results analogous to 
Theorem~\ref{thm:smooth-3p} results up to dimension six.

\begin{thm}
    \label{thm:smooth}
    Up to unimodular equivalence there are 
    \begin{itemize}
        \item $1 \, 530$ two-dimensional smooth polytopes having normalized
              volume at most 50;
        \item $1 \, 588$ three-dimensional smooth polytopes having normalized 
              volume at most 36;
        \item $738$ four-dimensional smooth polytopes having normalized 
              volume at most 24;
        \item $412$ five-dimensional smooth polytopes having normalized 
              volume at most 20;
        \item $127$ six-dimensional smooth polytopes having normalized 
              volume at most 16.
    \end{itemize}
    The distribution of the classified smooth polytopes by their volume
    is summarized in Appendix~\ref{app:smooth}.
\end{thm}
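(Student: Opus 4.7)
The plan is essentially a filtering argument: Theorem~\ref{thm:result} already provides a complete classification, up to unimodular equivalence, of the lattice polytopes of dimension $d$ and normalized volume at most $K$ for each of the five pairs $(d,K) \in \{(2,50),(3,36),(4,24),(5,20),(6,16)\}$. Since smoothness is a unimodular invariant, it suffices to traverse the database produced by Algorithm~\ref{alg:pol} and retain precisely those polytopes which satisfy the definition of smoothness given in Section~\ref{sec:smooth}.

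Concretely, I would proceed as follows. First, for each classified polytope $P \in \Pol_{\leq K}^d$ in its normal form, compute the vertex--facet incidence structure. Check simplicity by verifying that every vertex lies on exactly $d$ facets; polytopes failing this test are discarded immediately. For the simple survivors, compute at each vertex $\bv$ the $d$ primitive lattice vectors $\be_1(\bv),\ldots,\be_d(\bv) \in \Z^d$ pointing along the edges emanating from $\bv$, and test whether the $d \times d$ integer matrix they form has determinant $\pm 1$. A polytope is smooth precisely when this determinant condition holds at every vertex, so this produces the exact smooth sublist.

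After filtering, count the resulting polytopes, stratified by normalized volume, to obtain the tables referenced in Appendix~\ref{app:smooth}. The totals $1530$, $1588$, $738$, $412$, $127$ are then simply read off. As a sanity check, one should verify that the 103 smooth three-dimensional polytopes of \cite{Lun13} (classified by lattice point count $\leq 16$) all appear among the $1588$ smooth three-dimensional polytopes found here, since each of them has normalized volume at most $23 \leq 36$ by the remark preceding Theorem~\ref{thm:smooth-3p}; and that the analogous classifications of \cite{BHH15,Lor10} are subsumed. Since Theorem~\ref{thm:result} has already been validated against \cite{HT17,Kas10,BK16,BS17,AWW11,AKW17}, such cross-checks give independent confirmation of correctness.

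The main obstacle here is not mathematical but computational bookkeeping: one must trust that Algorithm~\ref{alg:pol} produces no duplicates and no omissions in the input lists, since any miscount in $\Pol_{\leq K}^d$ would propagate directly to the smooth enumeration. This has already been addressed in Section~\ref{sec:implementation} through the use of normal forms, the control over volume vectors $\W_K^d$, and the Ehrhart-theoretic truncation via Lemma~\ref{lem:degree1}; together with the agreement with all existing classifications listed after Theorem~\ref{thm:result}, this gives strong evidence that the filtered counts are correct. The distribution by volume displayed in Appendix~\ref{app:smooth} then completes the statement.
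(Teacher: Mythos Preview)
Your proposal is correct and matches the paper's approach: Theorem~\ref{thm:smooth} is a computational result with no separate proof in the paper, obtained exactly as you describe by filtering the database of Theorem~\ref{thm:result} for the smoothness condition (simplicity plus unimodularity of the primitive edge directions at every vertex) and counting. The paper states this only implicitly in the sentence preceding the theorem (``Algorithm~\ref{alg:pol} can be freely used in higher dimension''), and your write-up in fact makes the procedure and its validation more explicit than the paper does.
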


Note that, in dimension two, volume and number of lattice points are
strictly correlated.
It is easy to prove that, for a two-dimensional polytope $P$ 
\[
   |P \cap \Z^2| - 2 \leq \Vol(P) \leq 2|P \cap \Z^2|-5. 
\]
One can see as a consequence of some basic results of Ehrhart Theory, 
developed in the following section.
As a consequence, the classification of two-dimensional smooth polytopes
contains all those having up to 27 lattice points.
This extends \cite[Theorem~32]{BHH15}.

\begin{cor}
    Up to unimodular equivalence there are exactly $458$ two-dimensional smooth
    polytopes having up to 27 lattice points.
\end{cor}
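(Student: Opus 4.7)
The plan is to deduce the corollary directly from Theorem~\ref{thm:smooth} by using the displayed inequality $|P \cap \Z^2| - 2 \leq \Vol(P) \leq 2|P \cap \Z^2|-5$ to reduce the count to a lookup inside the already-classified database of smooth two-dimensional polytopes of normalized volume at most $50$. First I would justify the right-hand half of that inequality via Pick's theorem: writing $i(P)$ and $b(P)$ for the numbers of interior and boundary lattice points of $P$, one has $\Vol(P) = 2 i(P) + b(P) - 2$ and $|P \cap \Z^2| = i(P) + b(P)$, whence
\[
    \Vol(P) = 2|P \cap \Z^2| - b(P) - 2 \leq 2|P \cap \Z^2| - 5,
\]
the last step using $b(P) \geq 3$ because every lattice polygon has at least three vertices. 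Applied to any smooth $P \subset \R^2$ with $|P \cap \Z^2| \leq 27$ this gives $\Vol(P) \leq 49 \leq 50$, so every such $P$ lies in the family of $1\,530$ smooth polygons enumerated by Theorem~\ref{thm:smooth}.

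Second, I would observe that the converse inclusion is not automatic: the left half of the inequality merely yields $|P \cap \Z^2| \leq \Vol(P) + 2 \leq 52$, so the database does contain smooth polygons with more than $27$ lattice points which must be discarded. To finish, I would traverse the database of Theorem~\ref{thm:smooth} (available at the github repository cited after Theorem~\ref{thm:result}), compute $|P \cap \Z^2|$ for each entry, and count those satisfying $|P \cap \Z^2| \leq 27$. The output of this filter is $458$, which gives the stated corollary and, in particular, extends \cite[Theorem~32]{BHH15} from $12$ lattice points up to $27$.

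The main obstacle is purely computational rather than theoretical: no new argument is required beyond the invocation of Pick's theorem and trust in the classification underlying Theorem~\ref{thm:smooth}. The content of the corollary is that the volume bound $50$ in that theorem is generous enough to capture every smooth polygon with at most $27$ lattice points; the displayed inequality makes this quantitatively precise, and the remainder is a direct database query.
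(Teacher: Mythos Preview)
Your proposal is correct and follows essentially the same route as the paper: the paper also deduces the corollary from Theorem~\ref{thm:smooth} via the displayed inequality $\Vol(P)\leq 2|P\cap\Z^2|-5$ (which it attributes to ``basic results of Ehrhart Theory'' rather than Pick, but these are equivalent in dimension two), concluding that the volume-$50$ classification contains every smooth polygon with at most $27$ lattice points, and then reads off the count $458$ from the database. Your explicit derivation via Pick and the observation that one must still filter out the polygons with more than $27$ lattice points are both accurate and make the argument more self-contained than the paper's brief remark.
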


Conjecture~\ref{conj:oda} can now be easily verified on the classified 
polytopes.

\begin{thm}
    Conjecture~\ref{conj:oda} holds for all the smooth polytopes of 
    Theorem~\ref{thm:smooth}.
\end{thm}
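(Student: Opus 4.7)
The plan is direct computational verification on the database of smooth polytopes produced by Theorem~\ref{thm:smooth}. In total this classification contains $1530 + 1588 + 738 + 412 + 127 = 4395$ smooth polytopes in dimensions two through six, and the theorem asserts that each of them is IDP; since IDP is an intrinsic property of a single polytope, it is enough to test the polytopes one at a time.

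The main technical point is that IDP is \emph{a priori} an infinite condition — one requirement per positive integer $n$ — so it must first be reduced to a finite check. The standard reduction goes via the cone construction: for a $d$-dimensional lattice polytope $P$, set $C_P := \mathrm{cone}(P \times \{1\}) \subseteq \R^{d+1}$ and let $S_P$ denote the affine semigroup generated by $(P \cap \Z^d) \times \{1\}$. Then $P$ is IDP if and only if $S_P = C_P \cap \Z^{d+1}$, which in turn happens if and only if the Hilbert basis of $C_P \cap \Z^{d+1}$ is contained in $P \times \{1\}$. Both the Hilbert basis and the containment are computable by standard software such as \textsc{Normaliz}, and since the polytopes under consideration have small volume these computations are cheap. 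An equally valid elementary alternative is to enumerate $nP \cap \Z^d$ for the small values of $n$ needed and to test directly whether each lattice point is an $n$-fold Minkowski sum of lattice points of $P$.

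The main obstacle is not conceptual but hygienic: one must actually carry out the test for every polytope in the classification and rely on the correctness of the underlying Hilbert basis computation. After running this verification on all $4395$ smooth polytopes, every one turns out to be IDP, which establishes the theorem and, in particular, confirms Conjecture~\ref{conj:oda} within the range of the classification.
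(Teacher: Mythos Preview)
Your proposal is correct and is exactly the approach the paper takes: the theorem is stated without proof in the paper and is understood as a direct computational verification on the database, precisely as you describe via the Hilbert basis of the cone $C_P$. Your explicit reduction of IDP to a finite check is a useful addition that the paper leaves implicit; note also that the $1530$ two-dimensional cases are automatic, since every lattice polygon is IDP.
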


By observing Tables~\ref{tab:smooth-2p}-\ref{tab:smooth-6p} in Appendix~\ref{app:smooth}, one can notice 
that, in each dimension $d \leq 6$, there are only two smooth polytopes of 
normalized volume lower than or equal to $d$.
They are the \emph{unimodular simplex} $\D_d$, defined as the convex hull of 
the origin and the standard basis, which has volume one, and the prism
$\D_{d-1} \times \D_{1}$, which has volume $d$.
We now verify that this is always the case, for all $d$.
This is indeed a consequence of the combinatorics of simple polytopes.

We are going to use two classical results for simple polytopes. The first is 
Barnette's Lower Bound Theorem for simple polytopes.

\begin{thm}[{\cite[Theorems~1-2]{Bar71}}]
    \label{thm:barnette}
    Let $P$ be a $d$-dimensional simple polytope.
    Denote by $f_0$ and $f_{d-1}$ the number of vertices and facets of
    $P$, respectively.
    Then,
    \[
        f_0 \geq (d-1)f_{d-1} - (d+1)(d-2).
    \]
    If $ d\geq 4$, the inequality above is attained with equality only if $P$
    of obtained from a simplex via progressive truncations of vertices.
\end{thm}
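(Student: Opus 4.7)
The strategy is to pass to the polar dual and translate the statement into the Lower Bound Theorem for simplicial polytopes, then exploit the $h$-vector formalism together with the infinitesimal rigidity of simplicial polytopes.

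First I would replace $P$ by its dual simplicial polytope $Q = P^{\ast}$, so that $n := f_0(Q) = f_{d-1}(P)$ and $m := f_{d-1}(Q) = f_0(P)$. The claim becomes $m \geq (d-1)n - (d+1)(d-2)$ for every simplicial $d$-polytope $Q$, with equality characterization for $d \geq 4$. To pin down the extremal case, I would look at stacked polytopes, obtained by iterated stellar subdivisions of a $d$-simplex. A direct count shows each stellar subdivision adds one vertex and $d-1$ facets, so a stacked polytope with $n$ vertices has exactly $(d+1) + (n-d-1)(d-1) = (d-1)n - (d+1)(d-2)$ facets. These are precisely the duals of the truncation polytopes in the statement.

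Next I would expand $m$ via the $h$-vector of $Q$. Using $h_0 = h_d = 1$, $h_1 = h_{d-1} = n-d$, the Dehn--Sommerville relations $h_i = h_{d-i}$, and $m = \sum_{i=0}^{d} h_i$, one gets
\[
    m = 2 + 2(n-d) + \sum_{i=2}^{d-2} h_i,
\]
so the inequality reduces to $h_i \geq h_1$ for $2 \leq i \leq d-2$. The decisive case is $i=2$: a short manipulation shows $h_2 \geq h_1$ is equivalent to $f_1(Q) \geq d\, n - \binom{d+1}{2}$, which asserts that the $1$-skeleton of $Q$, viewed as a framework in $\R^d$, admits no non-trivial infinitesimal flex. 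For $d \geq 3$ this is Kalai's rigidity theorem and is the heart of the matter; I expect it to be the main obstacle. The remaining inequalities $h_i \geq h_1$ for $3 \leq i \leq d-2$ follow either from Barnette's original combinatorial induction on vertex figures or, more cleanly, from the generalized rigidity inequalities on the $g$-vector of a simplicial polytope.

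For the equality case with $d \geq 4$, equality in the overall bound forces $h_2 = h_1$, i.e.\ the edge framework of $Q$ is minimally rigid. The characterization of equality in the rigidity inequality says $Q$ must contain a \emph{missing facet}, that is, a set of $d$ vertices pairwise joined by edges but not spanning a facet of $Q$. Reversing a stellar subdivision along such a set yields a simplicial polytope on $n-1$ vertices still meeting the bound; induction on $n$ dismantles $Q$ down to the $d$-simplex, showing that $Q$ is stacked. Dualizing back, $P$ arises from a $d$-simplex by iterated vertex truncations, as required.
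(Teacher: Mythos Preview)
The paper does not prove this theorem at all: it is quoted verbatim from Barnette's 1971 paper via the citation \cite[Theorems~1-2]{Bar71} and is used as a black box in the proof of Lemma~\ref{lem:few_vertices}. There is therefore no ``paper's own proof'' to compare your attempt against.

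That said, your outline is a correct and well-known route to the result, though it is not Barnette's original argument. Barnette's 1971 proof is purely combinatorial, based on an inductive removal of vertices from the simplicial polytope (via what he calls M-sequences / facet systems); the equality case was handled separately in a companion paper. What you describe is essentially Kalai's 1987 proof: dualize, rewrite the facet bound as $\sum_{i=2}^{d-2} h_i \geq (d-3)h_1$, and deduce $h_2 \geq h_1$ from infinitesimal rigidity of the $1$-skeleton, with the missing-face argument giving the stacked characterization. Your reduction to $h_i \geq h_1$ for each $2 \leq i \leq d-2$ is slightly stronger than needed (the sum inequality suffices), and the step ``the remaining inequalities follow from generalized rigidity / the $g$-theorem'' is where the real work hides; but as a proof plan this is sound and is the standard modern treatment.
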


After that we are going to use a description of simple polytopes with $d+2$
facets, that can be found (in a dual version for simplicial polytopes) in
Gr{\"u}nbaum's textbook.

\begin{thm}[{\cite[Theorem~6.1.1]{Gru03}}]
    \label{thm:grunbaum}
    There exist $\lfloor \frac{1}{2}d \rfloor$ combinatorial types of 
    $d$-dimensional simple polytopes with $d+2$ facets.
    Those are exactly the products $\D_{d-i} \times \D_{i}$ for 
    $i=1,\ldots,\lfloor \frac{1}{2}d \rfloor$.
\end{thm}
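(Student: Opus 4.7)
The plan is to proceed by polar duality and apply the theory of Gale transforms to the resulting simplicial polytope.

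First I observe that the $\lfloor d/2 \rfloor$ polytopes $\D_{d-i} \times \D_i$ for $i = 1, \ldots, \lfloor d/2 \rfloor$ are mutually non-isomorphic (the unordered pair of factor dimensions is a combinatorial invariant), each is simple (products of simple polytopes are simple), of dimension $(d-i) + i = d$, and has $(d-i+1) + (i+1) = d+2$ facets. This gives the lower bound $\lfloor d/2 \rfloor$ on the number of combinatorial types.

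For the upper bound I pass to the polar dual: a simple $d$-polytope $P$ with $d+2$ facets corresponds to a simplicial $d$-polytope $P^*$ with $d+2$ vertices. Since $n-d-1 = 1$, the Gale transform of $P^*$ is a configuration of $d+2$ real numbers. Simpliciality forces all of them to be nonzero, and, up to the action of $GL_1(\R)$, each entry can be normalised to $\pm 1$. The combinatorial type is therefore fully encoded in the unordered pair $(p,q)$ with $p+q=d+2$, where $p$ (resp.\ $q$) counts entries of sign $+$ (resp.\ $-$). Both $p,q \geq 1$ is needed for $P^*$ to span $\R^d$; if $p=1$ or $q=1$ then $P^*$ would be a pyramid over a $d$-simplex, hence itself a $d$-simplex with only $d+1$ vertices, contradicting the vertex count. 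Hence $p,q \geq 2$, and the unordered pairs with $p+q = d+2$, $p,q \geq 2$ number exactly $\lfloor d/2 \rfloor$, parametrised by $i = 1, \ldots, \lfloor d/2 \rfloor$ via $(p,q) = (d+1-i,\, i+1)$.

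It remains to identify each pair $(p,q)$ with $\D_{d-i} \times \D_i$: a direct computation shows that the free sum $\D_{p-1} \oplus \D_{q-1}$ has the sign pattern $(p,q)$ as Gale transform, and its polar dual is $\D_{p-1} \times \D_{q-1} = \D_{d-i} \times \D_i$. The main obstacle is putting the Gale-duality framework on firm ground, in particular verifying the 1-dimensional classification of face lattices via sign vectors (a face corresponds to a subset whose complement in the Gale diagram contains entries of both signs) and the equivalence between simpliciality and nonvanishing of transform entries. Once this standard machinery is imported, the counting argument is immediate. A self-contained alternative would be induction on $d$, exploiting that each facet of a simple $d$-polytope with $d+2$ facets is either a simplex or itself a simple $(d-1)$-polytope with $(d-1)+2$ facets, reducing to the same classification one dimension lower.
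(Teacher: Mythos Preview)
The paper does not prove this theorem; it is quoted from Gr\"unbaum's textbook as a known result and used as a black box in the proof of Lemma~\ref{lem:few_vertices}. Your proposal therefore cannot be compared against a proof in the paper itself, but it can be compared against the argument in the cited source, and it is essentially that argument: Gr\"unbaum's Theorem~6.1.1 is stated (dually) for simplicial $d$-polytopes with $d+2$ vertices and is proved precisely via one-dimensional Gale diagrams, exactly as you do after passing to the polar.

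Your argument is correct in outline, with two small imprecisions worth tightening. First, the phrase ``up to the action of $GL_1(\R)$, each entry can be normalised to $\pm 1$'' is not literally true, since $GL_1(\R)$ acts by a single global scalar; what you mean (and what suffices) is that in dimension one the face lattice depends only on the sign pattern of the Gale vector, so one may replace each nonzero entry by its sign. Second, the exclusion of $p=1$ or $q=1$ is more directly phrased as follows: if $p=1$ then removing the unique positive index leaves a Gale configuration with only negative entries, so the corresponding singleton is not a face, i.e.\ that point is not a vertex of $P^*$, contradicting that $P^*$ has $d+2$ vertices. Your ``pyramid over a $d$-simplex, hence itself a $d$-simplex'' reaches the same conclusion but is a slightly roundabout way of saying that one point is redundant. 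With these cosmetic fixes the proof is complete and matches the standard treatment.
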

    
As a corollary of these result, we describe the combinatorics of simple
lattice polytopes having few lattice points.

\begin{lem}
    \label{lem:few_vertices}
    Let $P$ be a simple $d$-dimensional lattice polytope having at most $3d-4$ 
    lattice points.
    Then $P$ is either combinatorially equivalent to the simplex $\D_d$, or to
    the prism $\D_{1} \times \D_{d-1}$.
\end{lem}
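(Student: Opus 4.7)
The plan is to bound the number of facets of $P$ via Barnette's Lower Bound Theorem (Theorem~\ref{thm:barnette}) and then to invoke Grünbaum's classification of simple $d$-polytopes with $d+2$ facets (Theorem~\ref{thm:grunbaum}) in order to pin down the combinatorial type. The whole argument is essentially a clean combination of these two cited results with the vertex count of a product of simplices.

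First I would use that the number of vertices $f_0$ of $P$ is at most the number of lattice points, so $f_0 \le 3d-4$. Substituting this into Theorem~\ref{thm:barnette} and solving for the number of facets $f_{d-1}$ yields
\[
    (d-1)\, f_{d-1} \;\le\; f_0 + (d+1)(d-2) \;\le\; 3d-4 + (d+1)(d-2) \;=\; d^2 + 2d - 6,
\]
which, after dividing by $d-1$ and using that $f_{d-1}$ is an integer, forces $f_{d-1} \le d+2$. Since $P$ is $d$-dimensional we also have $f_{d-1} \ge d+1$, so only two cases remain.

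In the first case $f_{d-1} = d+1$, which means $P$ is a $d$-simplex and hence combinatorially equivalent to $\Delta_d$. In the second case $f_{d-1} = d+2$, and Theorem~\ref{thm:grunbaum} identifies $P$ combinatorially with some product $\Delta_{d-i} \times \Delta_i$ for $1 \le i \le \lfloor d/2 \rfloor$, whose number of vertices is $(d-i+1)(i+1)$. For $i=1$ this gives exactly $2d$, which is compatible with the hypothesis as soon as $d \ge 4$. For any $i \ge 2$, the elementary observation that a product of two positive integers with fixed sum is minimized at the most unbalanced split yields $(i+1)(d-i+1) \ge 3(d-1) = 3d-3$, already exceeding $3d-4$. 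Hence only $i=1$ survives, so $P \cong \Delta_1 \times \Delta_{d-1}$.

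The last thing I would verify is that small dimensions do not cause trouble. For $d=2$ the hypothesis is vacuous, and for $d=3$ the bound $f_0 \le 5$ combined with Barnette forces $f_2 \le 4$, so $P$ is a tetrahedron; equivalently, the prism $\Delta_1 \times \Delta_2$ has $6$ vertices and is excluded directly by $f_0 \le 3d-4 = 5$. Thus the dichotomy holds in every dimension. I do not expect any real obstacle beyond carefully checking the above numerical inequalities; the conceptual work has already been done by the two cited theorems.
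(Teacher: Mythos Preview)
Your proof is correct and follows essentially the same approach as the paper: bound $f_{d-1}\le d+2$ via Barnette's inequality, invoke Gr\"unbaum's classification for the $d+2$-facet case, and rule out $i\ge 2$ by the vertex count $(i+1)(d-i+1)$. Your explicit treatment of the small cases $d=2,3$ is, if anything, slightly more careful than the paper's.
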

\begin{proof}
    $P$ has at most $3d-4$ vertices.
    By plugging this number into the inequality of Theorem~\ref{thm:barnette}, 
    we get an upper bound for the number of facets $f_{d-1}$ that $P$ can have:
    \[
        f_{d-1} \leq d+2.
    \]
    Since $f_{d-1} = d+1$ if and only if $P$ is a simplex, we can focus on the 
    case $f_{d-1} = d+2$. 
    By Theorem~\ref{thm:grunbaum} $P$ is combinatorially equivalent to the 
    product $\D_i \times \D_{d-i}$, for some 
    $1 \leq i \leq \lfloor \frac{1}{2} d \rfloor$.
    In particular, $P$ has $f_0 = (i+1)(d-i+1)$ vertices. 
    Fixing $d$, this quantity is growing in $i$, so the inequality
    \[
        f_0 = (i+1)(d-i+1) \leq 3d-4
    \]
    is satisfied if and only if $i=1$.
\end{proof}

Now, by adding the constraint of being a smooth polytope, we can get a simple
description of smooth polytopes having few lattice points.

\begin{thm}
    \label{thm:smooth_few_points}
    Let $P$ be a smooth $d$-dimensional lattice polytope having at most $3d-4$
    lattice points.
    Then $P$ is a either the unimodular simplex $\D_d$ or a Lawrence prism
    with heights $a_0,\ldots,a_{d-1}$ with $a_i \geq 1$ for all $i$,  and 
    $\sum_{i=0}^{d-1} a_i = 2d-4$.
\end{thm}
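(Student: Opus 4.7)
The plan is to combine Lemma~\ref{lem:few_vertices} with smoothness and the lattice-point bound. Since $P$ is smooth it is simple, and every vertex of a lattice polytope is a lattice point, so the number of vertices is at most $|P\cap\Z^d| \leq 3d-4$. Lemma~\ref{lem:few_vertices} then forces $P$ to be combinatorially either $\D_d$ or $\D_1\times\D_{d-1}$, and I treat each case in turn.

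In the simplex case I would translate so that one vertex lies at the origin and apply a unimodular map carrying the primitive edge directions at that vertex to the standard basis $\be_1,\ldots,\be_d$; the remaining vertices then become $v_i = k_i \be_i$ for positive integer edge lengths $k_i$. Writing out smoothness at $v_1$, using the primitive direction of the edge from $v_1$ to $v_j = k_j\be_j$, forces $k_1 \mid k_j$ for every $j$, and by symmetry all the $k_i$ coincide. So $P \cong k\D_d$, and since $\binom{k+d}{d} > 3d-4$ already at $k=2$, one gets $k=1$ and $P = \D_d$.

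For the prism case, label the two simplex facets by $v_0,\ldots,v_{d-1}$ and $w_0,\ldots,w_{d-1}$, joined by vertical edges $v_iw_i$. Faces of smooth polytopes are smooth, so each simplex facet is itself a smooth $(d{-}1)$-simplex and, by the simplex case applied in dimension $d-1$, is unimodularly a dilate $k\D_{d-1}$. The two facets are disjoint, so combining their lattice-point counts with $|P\cap\Z^d|\leq 3d-4$ forces both dilations to be trivial and each facet to be $\D_{d-1}$. I then fix coordinates so the bottom facet is standard ($v_0=\bzero$, $v_i=\be_i$) and, using smoothness at $v_0$, align $\be_d$ with the primitive direction of $v_0w_0$, so $w_0 = a_0\be_d$ with $a_0\geq 1$. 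For each $i \geq 1$ the $2$-face $\conv(v_0,v_i,w_i,w_0)$ is contained in the coordinate plane spanned by $\be_i$ and $\be_d$, so $w_i = \alpha_i\be_i + \beta_i\be_d$ with $\alpha_i,\beta_i \geq 1$ by convexity of the quadrilateral.

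To promote this to a Lawrence prism, smoothness at $w_0$ forces $\alpha_i\mid(\beta_i-a_0)$, so the primitive direction of $w_i-w_0$ has the form $p_i := \be_i + \gamma_i\be_d$ for an integer $\gamma_i$, and $p_1,\ldots,p_{d-1},\be_d$ is a $\Z$-basis of $\Z^d$. In the basis $p_1,\ldots,p_{d-1}$ of the intrinsic affine lattice of the top facet, that facet becomes $\conv(\bzero,\alpha_1 p_1,\ldots,\alpha_{d-1} p_{d-1})$, whose normalized volume is $\prod\alpha_i$; since the top facet is $\D_{d-1}$ we conclude $\alpha_i=1$ for all $i$. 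Setting $a_i := \beta_i$ then yields $w_i = \be_i + a_i\be_d$, so $P$ is the Lawrence prism with heights $a_0,\ldots,a_{d-1}\geq 1$. This prism has exactly $d + \sum a_i$ lattice points (the integer points on its $d$ vertical edges, with no others), so $|P\cap\Z^d|\leq 3d-4$ translates into $\sum a_i \leq 2d-4$. The main obstacle is the step asserting $\alpha_i=1$: one has to identify the intrinsic affine lattice of the top facet with the sublattice of $\Z^d$ generated by the $p_i$'s, so that the intrinsic normalized volume of the top facet really equals $\prod\alpha_i$; once that identification is in place the remaining book-keeping is direct.
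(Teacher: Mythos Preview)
Your argument is correct and takes a somewhat different route from the paper in the prism case. Both proofs invoke Lemma~\ref{lem:few_vertices} and use that the two simplex facets, being smooth $(d{-}1)$-simplices with too few lattice points to be a nontrivial dilation, must be unimodular copies of $\D_{d-1}$. From there the paper looks at the \emph{first} lattice point $\bp_i$ along each vertical edge $E_i$: smoothness at the bottom vertex $\be_i$ makes $\conv(F\cup\{\bp_i\})$ a unimodular simplex, so every $\bp_i$ sits at height~$1$, and the paper then argues that the slice $P\cap\{x_d=1\}$ coincides with a dilate $tF\times\{1\}$ with $t=1$, forcing all $E_i$ to point along $\be_d$. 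You instead work directly with the top vertices $w_i$: the side $2$-faces pin down $w_i=\alpha_i\be_i+\beta_i\be_d$, smoothness at $w_0$ forces the primitive top-edge directions to be $p_i=\be_i+\gamma_i\be_d$ and, crucially, already certifies that $\{p_1,\dots,p_{d-1}\}$ is a $\Z$-basis of the intrinsic lattice of the top facet (since $\{-\be_d,p_1,\dots,p_{d-1}\}$ is a $\Z$-basis of $\Z^d$), which dissolves the ``obstacle'' you flag and yields $\prod\alpha_i=1$. Your route is more coordinate-heavy but arguably tighter at the crux, whereas the paper's assertion that the height-$1$ slice is literally $tF\times\{1\}$ is stated tersely.

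Two minor remarks. In the simplex case the divisibility you obtain from smoothness at $v_1$ is actually $k_j\mid k_1$ rather than $k_1\mid k_j$; this is harmless since symmetry over all vertices still forces $k_1=\cdots=k_d$. And your final conclusion $\sum a_i\le 2d-4$ is the right one: the ``$=$'' in the displayed statement is a slip, as the corollaries immediately following the theorem (which use $\Vol(P)\le 2d-4$) confirm.
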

\begin{proof}
    If $P$ is a simplex, the statement is trivial. 
    By Lemma~\ref{lem:few_vertices}, $P$ is combinatorially equivalent to 
    $\D_1 \times \D_{d-1}$, in particular $P$ has two facets $F$ and $F'$ which 
    are $(d-1)$-dimensional simplices.
    Since faces of smooth polytopes are smooth, $F$ and $F'$ are dilations of 
    $\D_{d-1}$. 
    The $t$-th dilation of a $(d-1)$-dimensional simplex has 
    $\binom{d+t-1}{d-1}$ lattice points (see e.g. \cite[Theorem~2.2]{BR15}),
    which is lower than or equal to $3d-4$ only for $t=1$.
    This proves that both $F$ and $F'$ are unimodular equivalent to $\D_{d-1}$.
    Since $P$ is smooth we can assume that
    \[
        F = \D_{d-1} = \conv(\bzero,\be_1,\ldots,\be_{d-1}),
    \]
    and that $\be_d$ is a lattice point of $P$.
    Let $E_0,\ldots,E_{d-1}$ be the edges of $P$ which are not in $F$ nor in
    $F'$, labeled such that $\be_i$ is one vertex of $E_i$ for 
    $i=1,\ldots,d-1$, while $\bzero$ and $\be_d$ are in $E_0$ ($\be_d$ might
    not be a vertex).
    Let moreover $\bp_i$ be the first lattice point met traveling from $\be_i$
    along $E_i$, for $i=0,\ldots,d-1$, so that $\bp_0=\be_d$.
    The statement follows by proving that 
    \[
        \conv(\bzero,\be_1,\ldots,\be_{d-1},\bp_0,\ldots,\bp_{d-1}) 
        = F \times \D_1.
    \]
    By the smoothness assumption, the simplex $\conv(F \cup \bp_i)$ is
    unimodular for all $i$.
    This proves that all the $\bp_i$'s are at height one, i.e
    \[
        \tilde{F} \coloneq \conv(\bp_0,\ldots,\bp_{d-1}) = P \cap \{x_d = 1\}.
    \]
    The combinatorics of $P$ implies that $\tilde{F}$ equals to $tF \times {1}$
    for a dilation factor $t$, but by the same argument as above, $t=1$.
\end{proof}

Having a Lawrence prism structure is very restricting.
Lattice points and volume of a Lawrence prisms $P$ are linked by the formula 
$|P \cap \Z^d| = d + \Vol(P)$ (see Lemma~\ref{lem:degree1}).

\begin{cor}
    In dimension $d$ the only smooth polytopes having normalized volume
    at most $d$ are the unimodular simplex $\D_d$ and the prism 
    $\D_{d-1} \times [0,1]$.
    They have normalized volume 1 and $d$, respectively.
\end{cor}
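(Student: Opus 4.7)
My plan is to combine Theorem~\ref{thm:smooth_few_points} with the lattice-point bound of Lemma~\ref{lem:degree1}, and then exploit the rigidity of the Lawrence prism structure.

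First I would convert the volume hypothesis into a lattice-point bound. By Lemma~\ref{lem:degree1} we have $|P\cap\Z^d|\leq d+\Vol(P)\leq 2d$, and for $d\geq 4$ this satisfies $|P\cap\Z^d|\leq 3d-4$, which is exactly the hypothesis of Theorem~\ref{thm:smooth_few_points}. Applying that theorem, $P$ is either the unimodular simplex $\D_d$, which has normalized volume $1$, or a Lawrence prism with heights $a_0,\ldots,a_{d-1}$, each $a_i\geq 1$.

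In the Lawrence prism case, each of the $d$ vertical edges of $P$ already contains $a_i+1$ distinct lattice points, giving the direct lower bound $|P\cap\Z^d|\geq d+\sum_{i=0}^{d-1}a_i$. Combining this with the upper bound $|P\cap\Z^d|\leq 2d$ from the previous paragraph yields $\sum a_i\leq d$. Since each $a_i\geq 1$, this forces $\sum a_i=d$ and $a_i=1$ for every $i$. Reading off the Lawrence prism definition with all heights equal to $1$ recognizes $P$ at once as the prism $\D_{d-1}\times[0,1]$, which has normalized volume $d$.

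The dimensions $d\leq 3$ lie outside the scope of Theorem~\ref{thm:smooth_few_points} (since $2d>3d-4$ there), so these I would dispatch separately: $d=1$ is trivial, while $d=2$ and $d=3$ follow at once by inspection of the enumeration already established in Theorem~\ref{thm:smooth}. The only real content is the rigidity step in the Lawrence prism case, which is essentially forced by the pair of inequalities $a_i\geq 1$ and $\sum a_i\leq d$; everything else is routine or an appeal to results already proved above.
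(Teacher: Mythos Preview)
Your argument is correct and follows essentially the same route the paper intends: bound the lattice points via Lemma~\ref{lem:degree1}, feed this into Theorem~\ref{thm:smooth_few_points}, and then pin down the Lawrence prism. The only cosmetic difference is that the paper, right before the corollary, records the exact identity $|P\cap\Z^d|=d+\Vol(P)$ for Lawrence prisms (equivalently $\Vol(P)=\sum_i a_i$), so one can go directly from $\Vol(P)\le d$ and $a_i\ge 1$ to $a_i=1$ for all $i$, without passing through your edge-count inequality; your version is of course equivalent. Handling $d\le 3$ via the explicit enumerations of Theorem~\ref{thm:smooth} is a legitimate shortcut, and your observation that $2d>3d-4$ in this range is exactly why a separate treatment is needed.
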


Lawrence prisms have a very restrictive geometry.
It is easy to show (for example using pushing or pulling triangulations)
that they have a \emph{quadratic} triangulation, i.e. a triangulation which
is regular unimodular and flag. 
We refer to \cite{HPPS14} for definitions and terminology about triangulations.
The existence of quadratic triangulations for a smooth polytope is a central question
in toric geometry as it implies that the associated projective toric variety has
a defining ideal generated by quadrics (see \cite{Stu96}). This problem and several of its variations
are sometimes known as B\"{o}gvad Conjecture.

\begin{cor}
    Let $P$ be a $d$-dimensional smooth polytope satisfying one of the 
    following equivalent conditions:
    \begin{itemize}
        \item $|P \cap \Z^d| \leq 3d-4$,
        \item $\Vol(P) \leq 2d-4$.
    \end{itemize}
    Then $P$ has a quadratic triangulation. In particular it is IDP.
\end{cor}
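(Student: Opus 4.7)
The plan is to use Theorem~\ref{thm:smooth_few_points} to reduce to two explicit structural cases and then to exhibit a quadratic triangulation in each.

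First, I would establish the equivalence of the two hypotheses using Lemma~\ref{lem:degree1}. The inequality $|P\cap\Z^d|\leq d+\Vol(P)$ holds unconditionally, so $\Vol(P)\leq 2d-4$ immediately gives $|P\cap\Z^d|\leq 3d-4$. Conversely, under the lattice-point bound, Theorem~\ref{thm:smooth_few_points} forces $P$ to be either the unimodular simplex $\D_d$ (with volume $1\leq 2d-4$ in the non-vacuous range $d\geq 3$) or a Lawrence prism with heights $a_i\geq 1$ summing to $2d-4$. In the Lawrence case the equality clause of Lemma~\ref{lem:degree1} upgrades the inequality to $\Vol(P)=\sum a_i=2d-4$, giving the equivalence.

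Next, I would construct a quadratic triangulation in each case. For $\D_d$ the triangulation consisting of $\D_d$ itself is trivially regular, unimodular, and flag, so the substance lies in Lawrence prisms. Let $P$ be such a prism with heights $a_0,\ldots,a_{d-1}\geq 1$; set $\bv_0=\bzero$ and $\bv_i=\be_i$ for $1\leq i\leq d-1$. Smoothness together with the Lawrence shape forces the lattice points of $P$ to sit on the $d$ vertical edges, namely they are exactly $\bp_{i,k}\coloneq\bv_i+k\be_d$ for $0\leq k\leq a_i$. I would build a \emph{staircase triangulation}: fix any monotone unit-step lattice path in the product grid $\prod_{i=0}^{d-1}\{0,\ldots,a_i\}$ from $(0,\ldots,0)$ to $(a_0,\ldots,a_{d-1})$, and record, at each unit step in which some coordinate $j$ advances from $k_j$ to $k_j+1$, the simplex $\conv(\bp_{0,k_0},\ldots,\bp_{d-1,k_{d-1}},\bp_{j,k_j+1})$. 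Each such simplex is a lattice translate of the standard $d$-simplex $\conv(\bzero,\be_1,\ldots,\be_d)$, hence unimodular---which is precisely where smoothness enters.

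Finally, I would verify the three properties. Unimodularity is explicit. Regularity follows by presenting the staircase as a pulling triangulation with respect to a suitably generic lifting of the lattice points $\bp_{i,k}$ (for instance the map $\bp_{i,k}\mapsto(i,k)$ composed with a generic linear functional). The main obstacle is \emph{flagness}: the edges of the triangulation form a particular subgraph of the natural comparability graph on the lattice points, dictated by the chosen monotone path, and one must show that every clique in this subgraph is the vertex set of some staircase simplex. I would handle this by induction on $d$, reducing to a combinatorial property of monotone paths in chain products; the base case $d=2$ is a short direct check. Once flagness is in place, the IDP conclusion is immediate, since any lattice polytope admitting a unimodular triangulation is IDP: every dilate $nP$ is tiled by $n$-fold dilates of unimodular simplices, whose lattice points are manifest sums of vertices of $P$.
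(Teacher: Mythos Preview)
Your overall strategy matches the paper's exactly: reduce via Theorem~\ref{thm:smooth_few_points} to the two cases $\D_d$ and Lawrence prisms, and then exhibit a quadratic triangulation in the Lawrence case. The paper, however, does not carry out the Lawrence case explicitly; it simply asserts before the corollary that Lawrence prisms have quadratic triangulations ``for example using pushing or pulling triangulations'' and refers to \cite{HPPS14} for background. So your proposal attempts rather more than the paper itself does.

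A few remarks on your explicit construction. First, a small slip: the simplices in your staircase are unimodularly \emph{equivalent} to $\D_d$, not lattice \emph{translates} of it (already in dimension two the triangle $\conv((0,1),(1,0),(0,2))$ arising from your path is not a translate of $\D_2$). Second, your description does yield a genuine unimodular triangulation: each step cuts off exactly the region of $P$ lying between the two affine hyperplanes $x_d=k_0+\sum_{i\geq 1}(k_i-k_0)x_i$ associated to consecutive states of the path, and that region is precisely your simplex. Third, the sketches for regularity and especially flagness are where the substantive work remains, and neither is yet a proof. Identifying your staircase with a pulling triangulation would settle both at once, but the lifting you propose---$(i,k)$ composed with a generic linear functional---does not obviously recover the specific staircase attached to a given monotone path, since different paths give genuinely different triangulations. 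Since the paper itself delegates these verifications to the literature, your outline is acceptable as a sketch in this context; if you want a self-contained argument you should either complete the flagness induction carefully or, following the paper's hint, argue directly that a pulling triangulation of a Lawrence prism is unimodular and flag. One final nitpick on the equivalence: the equality clause of Lemma~\ref{lem:degree1} gives $\Vol(P)=|P\cap\Z^d|-d\leq 2d-4$, not $=2d-4$.
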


Another consequence is the following finiteness result, independent
of the dimension.

\begin{cor}
    There are finitely many smooth polytopes of normalized volume $V$, for 
    any fixed integer $V>1$.
\end{cor}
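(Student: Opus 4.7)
The plan is to combine Theorem~\ref{thm:smooth_few_points} with the Lagarias--Ziegler Theorem~\ref{thm:LZ}, splitting into two cases according to whether the dimension is ``large'' or ``small'' relative to $V$. The essential observation, via Lemma~\ref{lem:degree1}, is that any lattice polytope $P$ of volume $V$ satisfies $|P \cap \Z^d| \leq d + V$. Consequently, as soon as $d + V \leq 3d-4$, i.e.\ $d \geq (V+4)/2$, the structure Theorem~\ref{thm:smooth_few_points} applies to every smooth $P$ of volume $V$.

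First I would handle the ``large dimension'' case $d \geq (V+4)/2$. Here Theorem~\ref{thm:smooth_few_points} leaves only two possibilities for $P$: the unimodular simplex $\D_d$, or a Lawrence prism with positive heights $a_0,\ldots,a_{d-1}$ summing to $2d-4$. The simplex option is ruled out immediately since $\Vol(\D_d)=1 < V$. For a Lawrence prism, counting the $a_i+1$ lattice points on each of the $d$ vertical fibres gives $|P \cap \Z^d| = d + \sum a_i$, and combining this with the equality case of Lemma~\ref{lem:degree1} yields $\Vol(P) = \sum a_i = 2d-4$. Setting this equal to $V$ pins down a unique admissible dimension $d = (V+4)/2$ (possible only when $V$ is even and $\geq 4$). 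In that single dimension, Lawrence prisms of volume $V$ correspond to compositions of $2d-4$ into $d$ positive integer parts, of which there are clearly only finitely many up to unimodular equivalence.

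Next I would dispatch the ``small dimension'' case $d < (V+4)/2$. Here the dimension is bounded above by an explicit function of $V$, so only finitely many dimensions are involved, and in each of them Theorem~\ref{thm:LZ} guarantees finitely many lattice polytopes (smooth or not) of volume at most $V$. Taking the finite union over this bounded range of dimensions together with the finite contribution from the large-dimension case yields the desired global finiteness. The hypothesis $V > 1$ is used only to discard the single infinite family of unimodular simplices; no step of the argument presents any real obstacle, since the structure theorem does almost all the work, and its combination with Lagarias--Ziegler is forced by the elementary dimension dichotomy above.
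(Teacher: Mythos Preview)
Your approach is essentially correct and matches what the paper intends (the corollary is stated there without proof, as an immediate consequence of Theorem~\ref{thm:smooth_few_points}). There is, however, one small slip in your large-dimension case. You take literally the clause in Theorem~\ref{thm:smooth_few_points} that the heights satisfy $\sum a_i = 2d-4$, but this is a typo in the paper: the hypothesis $|P\cap\Z^d|\leq 3d-4$ together with $|P\cap\Z^d|=d+\sum a_i$ only yields $\sum a_i\leq 2d-4$, as the corollary immediately following the theorem confirms (it lists $\Vol(P)\leq 2d-4$, not $=2d-4$, as the condition equivalent to $|P\cap\Z^d|\leq 3d-4$). With the correct inequality you cannot conclude $V=2d-4$, so $d$ is not uniquely determined as you claim.

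The repair is immediate and uses an ingredient you already quoted: since each $a_i\geq 1$, one has $V=\Vol(P)=\sum a_i\geq d$, so the large-dimension case in fact gives the bounded range $(V+4)/2\leq d\leq V$. For each such $d$ there are only finitely many compositions of $V$ into $d$ positive parts, hence finitely many Lawrence prisms up to equivalence. The small-dimension case via Theorem~\ref{thm:LZ} is exactly right, and the overall argument goes through.
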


By putting together this, together with our classification, we can easily classify
all smooth polytopes having normalized volume up to 10.

\begin{prop}
	\label{prop:small_smooth}
	Let $P$ be a smooth polytope having normalized volume at most 10.
	Then $P$ is either a Lawrence prism, or one of the following 14 polytopes:
	\begin{enumerate}
		\item[(2.a)] $\conv(\bzero,2\be_1,2\be_2)$,
		\item[(2.b)] $\conv(\bzero,3\be_1,3\be_2)$,
		\item[(2.c)] $\conv(\bzero,2\be_1,2\be_2,2\be_1+2\be_2)$,
		\item[(2.d)] $\conv(\bzero,\be_1,\be_2,-2\be_1 + \be_2, -4\be_1 + \be_2)$,
		\item[(2.e)] $\conv(\bzero,\be_1,\be_2,3\be_1+2\be_2,2\be_1+3\be_2,3\be_1+3\be_2)$,
		\item[(2.f)] $\conv(\bzero,\be_1,\be_2,2\be_1+\be_2,\be_1+2\be_2,2\be_1+2\be_2)$,
		\item[(2.g)] $\conv(\bzero,\be_1,\be_2,3\be_1+\be_2,\be_1+2\be_2,4\be_1+2\be_2)$,
		\item[(2.h)] $\conv(\bzero,\be_1,\be_1+2\be_2,-2\be_1+2\be_2)$,
		\item[(3.a)] $\conv(\bzero,3\be_1,3\be_2,3\be_3)$,
		\item[(3.b)] $\conv(\bzero,\be_1,\be_2,\be_1+\be_2,\be_3,\be_1+\be_3,-2\be_2+\be_3,\be_1-2\be_2+\be_3)$,
		\item[(3.c)] $\conv(\bzero,\be_1,\be_2,\be_3,-2\be_2+\be_3,2\be_1-2\be_2+\be_3)$,
		\item[(3.d)] $\conv(\bzero,\be_1,\be_2,\be_1+\be_2,\be_3,\be_1+\be_3,\be_2+\be_3,\be_1+\be_2+\be_3)$,
		\item[(4.a)] $\conv(\bzero,\be_1,\be_2,\be_3,\be_4,\be_1+\be_3,\be_1+\be_4,\be_2+\be_3,\be_2+\be_4)$,
		\item[(5.a)] $\conv(\bzero,\be_1,\be_2,\be_3,\be_4,\be_5,\be_1+\be_4,\be_1+\be_5,\be_2+\be_4,\be_2+\be_5,\be_3+\be_4,\be_3+\be_5)$,
	\end{enumerate}
\end{prop}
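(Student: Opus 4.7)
The plan is to dichotomize by dimension: in large enough dimensions the structure theorem~\ref{thm:smooth_few_points} handles everything immediately, while for small dimensions the computational classification of Theorem~\ref{thm:smooth} provides the finite list of exceptions.

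In the high-dimensional case, assume $d \geq 7$. By Lemma~\ref{lem:degree1},
\[
    |P \cap \Z^d| \leq d + \Vol(P) \leq d + 10 \leq 3d - 4,
\]
the last inequality being equivalent to $d \geq 7$. Theorem~\ref{thm:smooth_few_points} then forces $P$ to be either the unimodular simplex $\D_d$ or a Lawrence prism. Since $\D_d$ is itself the Lawrence prism with heights $a_0 = 1$ and $a_1 = \cdots = a_{d-1} = 0$, this case contributes only Lawrence prisms, and no exceptional polytope can occur for $d \geq 7$. The trivial cases $d \in \{0,1\}$ produce a point or a segment, both of which are Lawrence prisms.

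For the low-dimensional case, let $2 \leq d \leq 6$. Theorem~\ref{thm:smooth} enumerates all smooth $d$-dimensional lattice polytopes of normalized volume up to $50$, $36$, $24$, $20$, $16$ respectively, so in particular every smooth $P$ with $\Vol(P) \leq 10$ appears there. One then inspects this finite list: first separating those with the combinatorial shape of a Lawrence prism (two parallel $(d-1)$-simplex facets connected by $d$ parallel edges), and then checking that the remaining $8+4+1+1 = 14$ polytopes are exactly the ones in items (2.a)--(5.a). Note that the count is consistent with the absence of any exceptional polytope in dimension $d = 6$, where the gap between $d + \Vol(P) \leq 16$ and $3d - 4 = 14$ turns out to be empty for smooth polytopes.

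The main obstacle is essentially a careful case-by-case verification on the database: confirming that the 14 polytopes appearing in the statement are smooth, have the stated volumes, are pairwise non-equivalent under unimodular equivalence, and are not Lawrence prisms, while also checking that they exhaust the non-Lawrence-prism smooth polytopes of volume at most $10$ in each dimension $2 \leq d \leq 6$. Beyond Theorem~\ref{thm:smooth_few_points} and the enumeration of Theorem~\ref{thm:smooth}, no new conceptual ingredient is required.
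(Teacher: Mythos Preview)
Your proposal is correct and follows precisely the approach the paper indicates: the paper states the proposition immediately after remarking that it follows ``by putting together'' the structure result (Theorem~\ref{thm:smooth_few_points} and its corollaries) with the computational classification of Theorem~\ref{thm:smooth}, without spelling out further details. Your dichotomy at $d=7$ via Lemma~\ref{lem:degree1} and the subsequent database inspection for $2\le d\le 6$ is exactly this argument made explicit.
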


\section{Classifications of 3-simplices with few interior lattice points}
\label{sec:3-simplices}

In this section we show how the classification of polytopes (in particular 
simplices) with small volume can be used to enumerate all those having 
a fixed small number of interior lattice points. 

It is natural to wonder how large can the volume of a $d$-dimensional 
polytope $P$ be, when we fix the number of its interior lattice points to be
a nonnegative integer $k$. 
In case $k=0$, i.e. when $P$ is hollow, the answer is clear.
One can indeed fit an arbitrarily large $d$-dimensional hollow polytope in 
the ``slab'' $[0,1] \times \R^{d-1}$ . 
But in case $k \neq 0$ the answer is different:
Hensley~\cite{Hen83} proved that if $P$ is not hollow, then 
its volume is bounded by a constant depending only on the dimension $d$ and
the number of interior lattice points $k$.
In dimension two this was already know, thanks to a sharp bound proven by 
Scott in 1976 (see Theorem~\ref{thm:scott}, in the following section).
Hensley's bound has been improved first by Lagarias--Ziegler~\cite{LZ91}, 
later by Pikhurko~\cite{Pik01}, who shown the best currently known bound.
 
\begin{thm}
    \label{thm:pik}
    Let $P$ be a $d$-dimensional lattice polytope having $k$ interior lattice 
    points, $k\geq 1$. Then:
    \[
        \Vol(P) \leq d! \cdot(8d)^d \cdot 15^{d\cdot 2^{2d+1}}\cdot k
    \]
\end{thm}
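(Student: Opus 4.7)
The plan is to prove the bound by induction on the dimension $d$. The base case $d=1$ is immediate: a one-dimensional lattice polytope with $k\ge 1$ interior lattice points is a segment of normalized volume exactly $k+1$, which fits within the claimed bound easily.

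For the inductive step, the key is to control the lattice width of $P$. Fix an interior lattice point $\bp \in \mathrm{int}(P) \cap \Z^d$; after translation we may assume $\bp = \bzero$. For any primitive vector $u \in (\Z^d)^*$, the slices $P \cap \{u = j\}$ for $j \in \Z$ partition the lattice points of $P$. If the lattice width $\width(P) = \max_u \width_u(P)$ were too large, then by pigeonhole one of the ``slices'' close to $\bzero$ (which is itself a $(d-1)$-dimensional lattice polytope with $\bzero$ in its relative interior for at least one direction) would have to be either very thin or contain few interior points of $P$, and we would obtain a contradiction by comparing with the bound on interior lattice points given by the induction hypothesis applied to the slice. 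Quantitatively, this shows that in every lattice direction the width of $P$ is bounded by a function $W(d,k)$, and moreover that there exists a direction $u$ along which $P$ projects onto a $(d-1)$-dimensional lattice polytope $P'$ whose own volume and interior point count can be controlled.

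The second half of the step is to combine the two bounds. Choose the direction $u$ minimizing the width, so $\width_u(P) \le W(d,k)$. The projection $\pi : P \to P'$ satisfies
\[
    \Vol(P) \le \width_u(P) \cdot \Vol(P'),
\]
up to a combinatorial factor coming from the fact that volume is the integral of fiber lengths over $P'$. The induction hypothesis applied to $P'$ (after verifying that $P'$ has at most $k'$ interior lattice points, where $k'$ is controlled by $k$ and $W(d,k)$ through a careful accounting of which lattice columns of $P$ contain interior points) then yields the desired bound. The double-exponential shape $15^{d \cdot 2^{2d+1}}$ arises because the number of interior lattice points of the projected polytope can be as large as a polynomial in $k$ and $W(d,k)$, and the width bound $W$ itself depends exponentially on the previous step of the recursion, so the recursion essentially squares the constants at each level.

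The main obstacle is the quantitative control of the width bound $W(d,k)$ and the accounting of interior points of the projection. The naive bound $W(d,k) \le 2k+1$, which would suffice for centrally symmetric bodies by Minkowski's theorem, is false in general for lattice polytopes. Instead one has to work with a sharper geometric inequality, essentially the ``flatness theorem'' for lattice polytopes, in conjunction with the induction hypothesis itself: the resulting estimate on $W(d,k)$ is already polynomial in the volume bound at dimension $d-1$. It is precisely the substitution of the dimension-$(d-1)$ bound into the flatness estimate, and then back into the volume inequality, that produces the tower of exponentials in the final constant.
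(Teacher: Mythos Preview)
First, note that the paper does not prove this theorem: it is quoted from \cite{Pik01} and used as a black box, so there is no in-paper argument to compare against. Your proposal must stand on its own as a sketch of Pikhurko's proof.

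It does not. The inductive step has a genuine gap at the point where you apply the hypothesis to the projection $P'$: you need to bound the number $k'$ of interior lattice points of $P'$ in terms of $k$, and this is not available. An interior lattice point $q$ of $P'$ lifts to a fiber segment lying in the interior of $P$, but if that segment has length below $1$ it need not contain any lattice point of $P$ at all; hence $k'$ can be much larger than $k$, and your ``careful accounting of which lattice columns of $P$ contain interior points'' is precisely the missing lemma rather than a routine step. There is also a circularity problem in the width bound: the flatness theorem controls the lattice width of \emph{hollow} bodies, whereas a polytope with interior lattice points can be arbitrarily wide, and the standard way to bound its width is \emph{via} a volume bound --- which is what you are trying to prove. (Minor point: lattice width is $\min_u \width_u(P)$, not $\max_u$ as you wrote; your later ``choose $u$ minimizing the width'' is the correct convention.)

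For orientation, the actual proofs of Hensley, Lagarias--Ziegler and Pikhurko do not proceed by projection and induction on $d$. One fixes an interior lattice point as the origin, finds a simplex $S\subset P$ containing $\bzero$ in its interior, and uses a pigeonhole argument in barycentric coordinates (iterated over the facets of $S$) to show that $\bzero$ cannot sit too close to any facet; this yields an inclusion of $P$ into an explicit dilate of $-S$, from which the volume bound follows. The double-exponential constant arises from iterating the barycentric estimate across the $d+1$ facets, not from a recursion on dimension.
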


Although it grows linearly with $k$ (which is the conjectured behaviour, as 
stated later in Conjecture~\ref{conj:volume_bound}), the bound is expected to 
be very rough.
The current largest known volume of a $d$-dimensional lattice polytope having 
$k$ interior points, $k\geq 1$, is given by the \emph{ZPW simplex} defined as
\begin{equation}
    S^d_k:=\conv(\bzero,s_1 \be_1,\ldots,s_{d-1}\be_{d-1},(k+1)(s_d-1)\be_d)
\end{equation}
first described by Zaks--Perles--Wills~\cite{ZPW82} (hence the name\footnote{
    In a personal communication, J. M. Wills explained that the 
    unusual order of the authors of the two pages paper was agreed by the three
    in order to allow J. Zaks to be the first author of a coauthored paper, at 
    least once. 
}).
Here $(s_i)_{i\in\Z_{\geq 1}}$ is the \emph{Sylvester sequence}, 
defined by the following recursion
\[
    s_1=2,\qquad s_i=s_1\cdots s_{i-1}+1.
\]
It has been conjectured that, once $d$ and $k$ are fixed, the ZPW simplex 
$S_k^d$ maximises the volume amongst all $k$-point $d$-dimensional polytopes. 
This conjecture has been explicitly stated in \cite{BK16}, but has been 
already hinted in some of the previously cited works~\cite{ZPW82,Hen83,LZ91}.

\begin{conj}[{\cite[Conjecture~1.5]{BK16}}]
    \label{conj:volume_bound}
    Fix $d\geq 3$ and $k\geq 1$. A $d$-dimensional lattice polytope $P$ having 
    $k$ interior lattice points satisfies
    \begin{equation}\label{eq:conj}
        \Vol(P)\leq (k+1)(s_d-1)^2.
    \end{equation}
    With the exception of the case when $d=3$, $k=1$, this inequality is an 
    equality if and only if $P=S^d_k$.
\end{conj}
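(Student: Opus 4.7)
The plan is to attack the conjecture by induction on the dimension $d$, with the base $d=3$, $k=1$ handled by the existing classification of canonical toric Fano threefolds (and consistent with Kasprzyk's list). The inductive strategy splits into two parts: first reduce to the simplex case, then establish the sharp simplicial bound by a projection argument synchronized with the Sylvester recursion $s_d = s_1 \cdots s_{d-1} + 1$.

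For the reduction to simplices I would argue as follows. Suppose $P$ is a volume-maximizer in the class of $d$-dimensional lattice polytopes with exactly $k$ interior lattice points. Using Carath\'eodory-type or vertex-pruning moves one tries to show that if $P$ has more than $d+1$ vertices, then some vertex can be pushed out (keeping the interior lattice point count fixed) to produce a simplex with larger volume, contradicting maximality. A convenient machinery for this is the corank-one/volume-vector language of Section~\ref{sec:point_configurations}: every $(d+2)$-vertex configuration admits two triangulations $\mathcal{T}_+$ and $\mathcal{T}_-$ with signed simplicial volumes summing to zero, which gives a concrete local move to compare volumes and interior lattice points.

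The heart of the argument is the simplicial case. Given a full-dimensional simplex $S$ with $k$ interior lattice points, I would fix a vertex $\bv$ of maximal "width" contribution and project $S$ from $\bv$ onto the opposite facet $F$, which is a $(d-1)$-dimensional lattice polytope (after a suitable sublattice renormalization). One then partitions the interior points of $S$ into horizontal slices and applies the inductive hypothesis to the "effective" $(d-1)$-dimensional polytopes obtained from these slices. The arithmetic of the Sylvester sequence enters because, to maximize $\Vol(S)$ under a fixed interior-point budget $k$, the greedy dimension-by-dimension allocation forces each new dimension to contribute a factor of $s_i$, and the recursion $s_d = s_1\cdots s_{d-1}+1$ is exactly what makes $(k+1)(s_d-1)^2$ the correct closed form, matching $\Vol(S_k^d)$.

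The hard step, and the genuine obstacle, is step three: proving rigidity of the inductive bound, i.e.\ that an extremal simplex must have each intermediate "lower-dimensional piece" also extremal in its own right. This is what drives the uniqueness assertion $P \cong S_k^d$ and explains the genuine exception at $d=3,k=1$ (where coincidentally several polytopes attain the bound). For this rigidity I would combine Ehrhart-theoretic constraints on the $h^*$-polynomial of an extremizer with the Pikhurko-type flatness arguments that underlie Theorem~\ref{thm:pik}; the goal is to show that any deviation from the Sylvester-tower structure strictly decreases the volume, a statement that I expect to require a delicate case analysis near the top of the pyramid. As partial but unconditional evidence, the classification of three-dimensional simplices with up to $11$ interior lattice points produced by Algorithm~\ref{alg:sim} can verify \eqref{eq:conj} for $d=3$ and $1 \le k \le 11$, giving the conjecture an unconditional checkable base well beyond the previously known cases.
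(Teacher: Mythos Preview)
The statement you are addressing is a \emph{conjecture}, and the paper does not prove it. What the paper actually does with Conjecture~\ref{conj:volume_bound} is (i) record the known partial results (simplices with one interior point \cite{AKN15}, reflexive polytopes \cite{BKN16}, Averkov's local result \cite{Ave18}, and the explicit classifications for $d=3$, $k\in\{1,2\}$), and (ii) verify it computationally for three-dimensional \emph{simplices} with $1\le k\le 11$ via the enumeration of $\Sim_{\le 1000}^3$. There is no proof in the paper for you to compare against.

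Your proposal is not a proof but a programme, and you flag this yourself (``the hard step, and the genuine obstacle'', ``I expect to require a delicate case analysis''). Two of the steps you treat as routine are in fact the substance of the open problem. First, the reduction to simplices: your vertex-pushing argument assumes you can enlarge $P$ to a simplex while \emph{keeping the interior lattice point count fixed}; the corank-one/volume-vector machinery controls volumes, not interior lattice points, and there is no known mechanism that guarantees such a move exists. (That the conjectural extremizers happen to be simplices is evidence, not an argument.) Second, the inductive projection: projecting a lattice simplex onto a facet does not in general produce a lattice polytope in the correct sublattice with a controllable interior-point count, and there is no established way to ``partition the interior points into slices'' so that the $(d-1)$-dimensional inductive hypothesis applies cleanly; if this worked, the conjecture would have been settled shortly after \cite{Hen83} or \cite{LZ91}. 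The Sylvester arithmetic you describe is the heuristic for why $S_k^d$ should be extremal, not a proof that it is.

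What is genuinely correct in your write-up is the final paragraph: the enumeration of three-dimensional simplices does give unconditional verification of \eqref{eq:conj} for $d=3$ and $1\le k\le 11$. That is precisely the contribution the paper makes toward this conjecture, and nothing more.
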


The case when $d=3$, $k=1$ has been addressed in~\cite{Kas10}: in addition 
to the ZPW simplex $S^3_1$, the maximum normalized volume of $72$ is also 
attained by the simplex
\[
    \conv((0,0,0),(2,0,0),(0,6,0),(0,0,6)).
\]

In recent years, Conjecture~\ref{conj:volume_bound} has been proven for 
several families of lattice polytopes.
Explicit classifications \cite{Kas10,BK16} settle the cases $d=3$ and 
$k \in \{1,2\}$.
Averkov--Kr\"{u}mpelmann--Nill~\cite{AKN15} proved it for simplices with
one interior point, while Balletti--Kasprzyk--Nill \cite{BKN16} for
reflexive polytopes.
Recently Averkov \cite{Ave18} proved it for simplices having a facet with one 
lattice point in its relative interior.
In all these families the bound for the volume is sharp as they include 
the ZPW simplices.

We now use our classification to enumerate all the three-dimensional simplices
having few interior lattice points, in this way we will be able to verify
Conjecture~\ref{conj:volume_bound} in this additional cases.
The idea is to use volume bounds to make sure that our classification contains
all lattice polytopes having small numbers of interior points.
Note that using the general bounds, even the best ones known
(Theorem~\ref{thm:pik}) would be futile: one would have to classify all the
simplices having up to normalized volume $3.4 \cdot 10^{456}$, in order to be
sure that the classification contains all the three-dimensional polytopes 
with one interior lattice point.
Luckily some better bounds are known in special cases.

\begin{thm}[{\cite{Pik01}}]
    Let $S$ be a three-dimensional lattice simplex having $k$-interior lattice 
    points, with $k \geq 1$. Then
    \[
        \Vol(S) \leq \frac{29791}{352} k \leq 85k.
    \]
\end{thm}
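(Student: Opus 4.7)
The plan is to follow the strategy behind Pikhurko's general bound (Theorem~\ref{thm:pik}) but to sharpen it heavily by exploiting the low dimension $d = 3$; the argument splits into a geometric step (choosing a well-placed interior lattice point of $S$) and an arithmetic step (a Sylvester-type extremal problem in the resulting barycentric coordinates). Let $S = \conv(v_0, v_1, v_2, v_3)$ be a lattice 3-simplex with $k \geq 1$ interior lattice points, and fix any interior lattice point $p$ of $S$. Expressing $p$ in barycentric coordinates, $p = \sum_{i=0}^{3} \lambda_i v_i$ with $\lambda_i > 0$ and $\sum_i \lambda_i = 1$, the four subsimplices $S_i \coloneq \conv(\{v_0, \ldots, v_3, p\} \setminus \{v_i\})$ tile $S$ with normalized volumes $\Vol(S_i) = \lambda_i \Vol(S)$. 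Since each $S_i$ is itself a lattice simplex, the values $a_i \coloneq \lambda_i \Vol(S)$ are positive integers summing to $\Vol(S)$.

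Next, I would choose $p$ carefully so as to keep all $\lambda_i$ bounded away from $1$, ensuring that no single subsimplex $S_i$ accounts for almost all of the volume. The presence of $k$ interior lattice points makes this possible in a quantitative way: a pigeonhole argument over the family of slabs parallel to the facets of $S$ should produce an interior lattice point $p$ for which $1 - \max_i \lambda_i \geq c/k$ for an explicit constant $c > 0$. This is exactly the step that converts the number of interior points into a linear factor of $k$ on the right-hand side of the final bound.

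With such a $p$ fixed, the remaining task is an extremal integer problem in the $a_i$: maximize $\Vol(S) = a_0 + a_1 + a_2 + a_3$ subject to the integrality of the $a_i$, the lower bound $\min_i a_i \geq (c/k)\Vol(S)$, and the compatibility constraints that force the $\lambda_i$ to arise from an interior lattice point of a lattice simplex. This is the three-dimensional analogue of the one-interior-point Sylvester maximization used in \cite{AKN15}, where the Sylvester recursion produces the factor $(s_3 - 1)^2 = 36$ appearing in the ZPW bound. Running the analogous optimization in $d = 3$ should yield the claimed constant $29791/352 = 31^3/(11 \cdot 32)$, from which $\Vol(S) \leq 85 k$ follows since $29791/352 < 85$.

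The main obstacle will be the quantitative spread estimate in the second step: it is intuitively clear that many interior lattice points prevent $p$ from being pushed close to a facet, but proving the precise bound $1 - \max_i \lambda_i \geq c/k$ requires careful control of the distribution of interior lattice points along each facet direction, for instance via lattice-width considerations applied to hollow slabs. The arithmetic optimization in the third step is then a finite and routine computation, although extracting the exact constant $29791/352$ requires some delicate bookkeeping on the denominators that can appear.
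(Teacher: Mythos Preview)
The paper does not contain a proof of this statement. It is quoted verbatim from Pikhurko \cite{Pik01} and used as a black box: the only role it plays is to guarantee that every three-dimensional lattice simplex with at most $11$ interior lattice points has normalized volume at most $85 \cdot 11 < 1000$, so that such simplices all appear in the precomputed set $\Sim_{\leq 1000}^3$. There is therefore no ``paper's own proof'' to compare your proposal against.

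As for the proposal itself, it is a plan rather than a proof, and you say so explicitly: the two substantive steps are both left open. The first gap is the claim that among the $k$ interior lattice points one can always find some $p$ whose barycentric coordinates satisfy $1-\max_i \lambda_i \geq c/k$ for an absolute constant $c$. Your pigeonhole-over-slabs heuristic does not obviously deliver this; being far from one facet does not prevent $p$ from being extremely close to another, and controlling all four directions simultaneously is exactly the delicate part of Pikhurko's argument. The second gap is the optimization that is supposed to produce $29791/352 = 31^3/(11\cdot 32)$: you assert that a ``Sylvester-type'' maximization yields this constant, but you give no indication of which constraints are being optimized over or why these particular numbers appear. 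Without those two pieces the outline does not constitute a proof, and in particular the specific constant cannot be recovered from what you have written.
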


We now use Algorithm~\ref{alg:sim} to classify all the elements in
$\Sim_{\leq 1000}^3$.

\begin{prop}
    There are $28 \, 015 \, 923$ three-dimensional simplices having normalized 
    volume at most $1 \, 000$.
\end{prop}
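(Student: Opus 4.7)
The plan is purely computational: apply Algorithm~\ref{alg:sim} iteratively for dimensions $d=1,2,3$ with $K=1000$, producing $\Sim_{\leq 1000}^3 = \bigcup_{V=1}^{1000} \Sim_V^3$, and then count its elements. The theoretical justification is already in place through Lemma~\ref{lem:LZsimplices}: every $d$-dimensional lattice simplex is unimodularly equivalent to one whose vertices are the origin and the columns of an upper-triangular matrix, with diagonal entries $a_{1,1},\ldots,a_{d,d}$ multiplying to $\Vol(S)$ and off-diagonal entries $a_{i,j}$ bounded by $a_{j,j}-1$. So the enumeration reduces to a finite and easily parameterized loop.

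First I would initialize $\Sim_V^1 = \{\conv(\bzero,V\be_1)\}$ for each $V \in [1,1000]$, which gives a single simplex per volume. Then for each $V$ and each divisor $v$ of $V$, Algorithm~\ref{alg:sim} builds $\Sim_V^2$ by lifting the unique element of $\Sim_v^1$ with an apex $(p_1, V/v)$ for $p_1 \in [0, V/v - 1]$. Iterating the same construction in dimension three produces candidate representatives for every class in $\Sim_V^3$. Summing $|\Sim_V^3|$ over $V \in [1,1000]$ then yields the claimed count $28\,015\,923$.

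The main obstacle is twofold. First, the enumeration by Hermite normal forms does not fully quotient by unimodular equivalence: different orderings of the vertices of a single simplex generally produce distinct upper-triangular matrices, so the raw output of Algorithm~\ref{alg:sim} overcounts. One must therefore canonicalize each simplex, for instance by computing an affine normal form in the spirit of PALP and hashing, to eliminate duplicates. Second, since the candidate set grows rapidly when $V$ is near $1000$, the bottleneck is computational cost and memory rather than any subtle mathematical argument; this is managed by stratifying the enumeration by volume and by deduplicating on the fly, as discussed at the beginning of Section~\ref{sec:implementation}.

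Correctness of the final tally follows formally: Lemma~\ref{lem:LZsimplices} guarantees that every equivalence class in $\Sim_{\leq 1000}^3$ is produced at least once by the enumeration, while the canonical-form deduplication step ensures that no class is counted twice. The value $28\,015\,923$ is then read off directly from the resulting database (and, for independent confirmation, agrees with the cross-checks mentioned in Theorem~\ref{thm:result} when one restricts to $V\le 36$).
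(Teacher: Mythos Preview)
Your proposal is correct and follows essentially the same approach as the paper: the proposition is a computational result obtained by running Algorithm~\ref{alg:sim} to enumerate $\Sim_{\leq 1000}^3$, with canonicalization to eliminate duplicates among the Hermite-normal-form representatives. The paper offers no further argument beyond invoking the algorithm, so your write-up is in fact more detailed than the original.
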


As an immediate corollary, we are able to fully enumerate the three-dimensional
simplices having up to 11 interior lattice points.

\begin{cor}
    \label{cor:simplices}
    All the 3-simplices having up to 11 interior lattice points are in 
    $\Sim_{\leq 1000}^3$.
    Their distribution by number of interior lattice points is summarized in
    Table~\ref{tab:int_pts}.
\end{cor}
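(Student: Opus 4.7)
The plan is to combine Pikhurko's volume bound with the already-computed classification $\Sim_{\leq 1000}^3$. The key observation is purely arithmetic: Pikhurko's theorem states that any three-dimensional lattice simplex with $k \geq 1$ interior lattice points satisfies $\Vol(S) \leq \tfrac{29791}{352} k \leq 85 k$. Plugging in $k \leq 11$ gives $\Vol(S) \leq 85 \cdot 11 = 935 < 1000$. Thus every three-dimensional simplex with between $1$ and $11$ interior lattice points already appears in $\Sim^3_{\leq 1000}$, so no simplex is missed.

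Concretely, the steps I would carry out are as follows. First, recall the classification result: by iterating Algorithm~\ref{alg:sim} up to $V = 1000$ we obtain the full list $\Sim^3_{\leq 1000}$ of $28\,015\,923$ simplices (this is exactly the content of the proposition preceding the corollary). Second, apply Pikhurko's bound above to certify that any $3$-simplex $S$ with $1 \leq |\mathrm{int}(S) \cap \Z^3| \leq 11$ satisfies $\Vol(S) \leq 935 \leq 1000$, which places $S$ in our database. Third, for every $S \in \Sim^3_{\leq 1000}$ compute $|\mathrm{int}(S) \cap \Z^3|$ (this is a standard routine, e.g.\ using Ehrhart reciprocity or direct lattice-point enumeration in the interior), and tabulate the counts stratified by the value of $k = 0, 1, 2, \ldots, 11$. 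The resulting distribution is exactly the content of Table~\ref{tab:int_pts}.

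There is essentially no analytical obstacle: once Pikhurko's theorem is accepted, the inequality $85 \cdot 11 < 1000$ reduces the classification problem to a finite search inside a finite database. The only practical issue is computational—running the interior-lattice-point count over roughly $2.8 \cdot 10^7$ simplices and bucketing them correctly. This is routine once the classification has been produced, and the honest content of the corollary is really the confluence of two facts: the effectiveness of Algorithm~\ref{alg:sim} up to volume $1000$, and the sharpness (for our purposes) of Pikhurko's linear bound in $k$. The case $k=0$ is not covered—hollow $3$-simplices can have unbounded volume, e.g.\ $\conv(\bzero, \be_1, \be_2, n\be_3)$—so the statement is understood for $k \geq 1$, as reflected in Table~\ref{tab:int_pts}.
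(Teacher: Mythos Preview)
Your proposal is correct and matches the paper's approach exactly: the corollary is stated without explicit proof because it follows immediately from Pikhurko's bound $\Vol(S) \leq 85k$ (for $k \leq 11$ this gives $\Vol(S) \leq 935 < 1000$) combined with the preceding proposition enumerating $\Sim_{\leq 1000}^3$. Your write-up spells out precisely this reasoning, including the computational step of bucketing the database by interior lattice point count.
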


Note that Corollary~\ref{cor:simplices} can be seen as an extension of existing classifications
of three-dimensional simplices performed up to two interior lattice points. The
225 three-dimensional simplices with one interior lattice points have been
enumerated by Borisov and Borisov \cite[pg.~278]{BB92}, while the 471 with
two interior lattice points are classified in \cite[Theorem~3.4]{BK16}. 

\begin{center}
    \small
    \begin{longtable}{c@{\ \ }c@{\ \ }c}
    \caption{
        Number of three-dimensional polytopes and simplices having few
        interior lattice points.
        The numbers of the polytopes are from \cite{Kas10} and \cite{BK16}.
    }
    \label{tab:int_pts}\\
    \toprule
    \begin{tabular}{@{}c@{}} number of \\ interior points \end{tabular} & 
    \begin{tabular}{@{}c@{}} number of \\ simplices       \end{tabular} & 
    \begin{tabular}{@{}c@{}} number of \\ polytopes       \end{tabular} \\
    \cmidrule(lr){1-3}
    \endfirsthead
    \orow 1  & 225  & 674688   \\
    \erow 2  & 471  & 22673449 \\
    \orow 3	 & 741  &		   \\
    \erow 4	 & 1206 &		   \\
    \orow 5	 & 1338 &		   \\
    \erow 6	 & 2063 &		   \\
    \orow 7	 & 2191 &		   \\
    \erow 8	 & 3007 &		   \\
    \orow 9	 & 3257 &		   \\
    \erow 10 & 4216 &		   \\
    \orow 11 & 4087 &		   \\
    \bottomrule
    \end{longtable}
\end{center}

Conjecture~\ref{conj:volume_bound} can be now verified on the classified 
objects. 
In a similar way, we are going to verify that the classify simplices 
satisfy a conjecture known as Duong Conjecture \cite[Conjectures~1-2]{Duo08}
It concerns a special class of three-dimensional lattice simplices.
We call a lattice polytope \emph{clean}, if the only lattice points on its 
boundary are the vertices.

\begin{conj}[{Duong Conjecture}]
    \label{conj:duong}
    Let $S$ be a three-dimensional clean simplex having $k$ interior lattice 
    points, with $k \geq 1$.
    Then 
    \[
        \Vol(S) \leq 12k + 8,
    \]
    where the equality is attained if and only if $S$ is the \emph{Duong 
    simplex}, defined as
    \[
        D^3_k\coloneq\conv\left((0,0,0),(1,0,0),(0,1,0),(3,6k+1,12k+8)\right).
    \]
\end{conj}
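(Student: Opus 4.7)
The conjecture as stated is open in general, so in the context of this paper my goal would be to verify it for the range of $k$ accessible to the classification rather than to settle it in full. The plan rests on Pikhurko's bound $\Vol(S) \leq 85 k$ for any $3$-simplex with $k \geq 1$ interior lattice points: for $k \leq 11$ this ensures $\Vol(S) \leq 935 < 1000$, so every clean 3-simplex with at most 11 interior lattice points already appears in the enumerated list $\Sim_{\leq 1000}^3$ of the preceding proposition.

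The first step is to sieve $\Sim_{\leq 1000}^3$ for the \emph{clean} simplices, namely those whose only boundary lattice points are the four vertices. This can be checked locally on each simplex: (i) each edge must have primitive direction vector (so no interior lattice points on edges), and (ii) each of the four triangular facets must contain no lattice points beyond its three vertices, which reduces to a lattice-width computation on the facet. Both conditions are gcd- and determinant-level tests on the vertex matrix. The second step is to stratify the surviving simplices by their number $k$ of interior lattice points (readily computable once vertices are in hand) and, for each $k \in \{1, \ldots, 11\}$, verify both the inequality $\Vol(S) \leq 12k + 8$ and the equality characterization. An equality case is compared against the Duong simplex $D_k^3$ by reducing the vertex matrix to Hermite normal form, exactly as in the proof of Lemma~\ref{lem:LZsimplices}; this certifies unimodular equivalence.

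The main obstacle is not conceptual but organizational: with over $2.8 \cdot 10^7$ simplices to sieve, the cleanness test and the normal-form comparison must be implemented efficiently, but both are cheap per simplex and reuse routines already in place from Algorithm~\ref{alg:sim}. A genuinely theoretical proof valid for arbitrary $k$ seems substantially harder; it would presumably need to bound the apex of a clean 3-simplex above a triangular base in terms of the Ehrhart data of that base (where Scott-type inequalities are available) and then separately characterize the extremal configurations, in the spirit of Averkov's treatment of simplices with an interior facet point. This theoretical route lies outside the scope of the classification-based method, which is why the natural contribution here is a computational confirmation of Duong's Conjecture in a wider range of cases than previously known.
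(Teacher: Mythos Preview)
Your proposal is correct and aligns with the paper's own treatment: the statement is presented as an open conjecture, and the paper's contribution is precisely the computational verification for $k \le 11$ via Pikhurko's bound $\Vol(S) \le 85k$ and the enumeration $\Sim_{\le 1000}^3$, exactly as you outline. The paper does not spell out the cleanness sieve or the Hermite-normal-form comparison for the equality case, but your added implementation details are sound and consistent with the machinery already in place.
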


\begin{thm}
    Conjectures~\ref{conj:volume_bound} and~\ref{conj:duong} hold for 
    three-dimensional simplices having up to 11 interior lattice points.
\end{thm}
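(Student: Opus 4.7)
The plan is a computer verification that exploits Corollary~\ref{cor:simplices}: since every three-dimensional simplex with at most $11$ interior lattice points lies in $\Sim_{\leq 1000}^3$, and that set has been completely enumerated (as $28\,015\,923$ explicit classes up to unimodular equivalence), both conjectures reduce to finite checks on a known list. First I would partition $\Sim_{\leq 1000}^3$ by the number $k$ of interior lattice points; for each simplex this number is read off as $\h^*_k$ (equivalently, as $|\mathrm{int}(S)\cap\Z^3|$ computed by a bounding-box sweep or by Ehrhart reciprocity), and one discards the hollow simplices and those with $k\geq 12$.

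For Conjecture~\ref{conj:volume_bound} with $d=3$, one has $(k+1)(s_3-1)^2 = 36(k+1)$. For each $k\in\{1,\ldots,11\}$ I would sort the simplices with that many interior lattice points by $\Vol$, verify that the top value is at most $36(k+1)$, and then check that equality is realized exactly as predicted: for $k\geq 2$ uniquely by the ZPW simplex $S^3_k$, and for $k=1$ by exactly the two simplices $S^3_1$ and $\conv(\bzero,2\be_1,6\be_2,6\be_3)$ identified in \cite{Kas10}. Equivalence with these named extremal simplices is checked by comparing normal forms already stored with the database (for instance the Hermite normal form of the vertex matrix after translating one vertex to $\bzero$).

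For Conjecture~\ref{conj:duong} I would first filter the clean simplices, which for a $3$-simplex $S$ is the condition $|S\cap\Z^3|=4+k$ (only the four vertices sit on $\partial S$); this is immediate from the lattice-point count already computed. On the resulting sublist, for each $k\in\{1,\ldots,11\}$ I would check $\Vol(S)\leq 12k+8$ and confirm that the unique maximizer is unimodularly equivalent to the Duong simplex $D^3_k=\conv((0,0,0),(1,0,0),(0,1,0),(3,6k+1,12k+8))$, again by normal-form comparison.

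The only real obstacle has been absorbed into Corollary~\ref{cor:simplices}: once the search space $\Sim_{\leq 1000}^3$ is shown to contain every candidate (which relies on Pikhurko's bound $\Vol(S)\leq 85k$ combined with the classification performed by Algorithm~\ref{alg:sim}), no further theoretical work is needed. What remains is routine Ehrhart bookkeeping on an explicit finite list, and a direct comparison of the extremal entries with the closed-form simplices $S^3_k$ and $D^3_k$ predicted by the two conjectures.
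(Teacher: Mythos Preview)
Your proposal is correct and matches the paper's approach: the theorem is stated in the paper without a written proof, immediately after the remark that the conjectures ``can now be verified on the classified objects,'' so the intended argument is precisely the finite computer check on $\Sim_{\leq 1000}^3$ via Corollary~\ref{cor:simplices} that you describe. One small slip: you write that the number of interior points is ``read off as $\h^*_k$''; you mean $\h^*_3$ (Proposition~\ref{prop:h^*_basic}(3)).
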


\section{ Conjectural Ehrhart inequalities in dimension three}
\label{sec:conj}

In this section we use the classification of three-dimensional polytopes to 
estimate the behaviour of their $h^*$-polynomials. 
We begin with a quick introduction to Ehrhart Theory, but we refer the 
interested reader to Beck and Robins' book \cite{BR15}.

Given a $d$-dimensional lattice polytope $P$ in $\R^d$, one can associate 
a function $t \mapsto |tP \cap \Z^d|$, which counts the number of lattice 
points in $tP$, the $t$-th dilation of $P$, where $t$ is a positive integer. 
Ehrhart~\cite{Ehr62} proved that this function behaves polynomially, i.e. 
that there exists a polynomial $\ehr_P$, that we call \emph{Ehrhart polynomial}
of $P$, satisfying $\ehr_P(t)=|tP \cap \Z^d|$ for $t \geq 1$.

Its generating function is the rational function
\[
    \sum_{t \geq 0} \ehr_P(t) z^t =\frac{\sum_{i \geq 0}\h_i z^i}{(1-z)^{d+1}},
\]
where $\h_i=0$ for any $i \geq d+1$.
We call the polynomial $\h_P(z)=\h_0 + \h_1 z + \cdots + \h_s z^s$ the 
\emph{$\h$-polynomial} of $P$ (sometimes also called 
\emph{$\delta$-polynomial}), and we set the \emph{degree} of $P$ to be $s$,
the degree of its $\h$-polynomial. 
In the following we will often identify the $\h$-polynomial with the vector of 
its coefficients $(\h_0,\h_1,\ldots,\h_d)$, which is called the 
\emph{$\h$-vector} (or \emph{$\delta$-vector}) of $P$. Some properties of the 
$\h$-polynomial are well known, and listed in the following proposition.

\begin{prop}
    \label{prop:h^*_basic}
    The coefficients $h^*_0,h^*_1,\ldots,h^*_d$ of the $h^*$-polynomial of $P$ 
    are nonnegative integers satisfying the following conditions:
    \begin{enumerate}
        \item $\h_0=1$;
        \item $\h_1=|P \cap \Z^d| - d - 1$;
        \item $\h_d = |P^\circ \cap \Z^d|$;
        \item $\sum_{i=0}^d \h_i=\Vol(P)$.
    \end{enumerate}
\end{prop}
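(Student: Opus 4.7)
The plan is to derive all four identities from a single explicit polynomial formula for $\ehr_P$, together with Ehrhart--Macdonald reciprocity for item~(3). Nonnegativity of the $\h_i$, although asserted in the proposition, is the content of Stanley's classical theorem on $\h$-vectors, which I would quote rather than reprove.

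Concretely, I would expand the generating function using $(1-z)^{-(d+1)}=\sum_{k\ge 0}\binom{k+d}{d}z^k$ and match coefficients of $z^t$ in the defining identity $\sum_{t\ge 0}\ehr_P(t)z^t=\h_P(z)/(1-z)^{d+1}$ to obtain
\[
    \ehr_P(t) \;=\; \sum_{i=0}^{d} \h_i \binom{t+d-i}{d}.
\]
This one formula does most of the work. Integrality of the $\h_i$ follows because the binomials $\binom{t+d-i}{d}$ for $i=0,\ldots,d$ form a $\Z$-basis of the integer-valued polynomials of degree at most $d$, and $\ehr_P$ takes integer values at integer arguments. Item~(1) comes from setting $t=0$: every summand with $i\ge 1$ gives $\binom{d-i}{d}=0$, so $\ehr_P(0)=\h_0$, and the Ehrhart polynomial is known to have constant term $1$. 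Item~(2) comes from setting $t=1$: only $i=0,1$ contribute, yielding $|P\cap\Z^d|=\ehr_P(1)=(d+1)\h_0+\h_1$, which rearranges to the claim. Item~(4) follows by matching leading coefficients in $t$: the right-hand side has leading term $\bigl(\sum_i \h_i\bigr)t^d/d!$, while the leading coefficient of $\ehr_P$ is $\vol(P)$ by the standard volume interpretation, so $\sum_i \h_i=d!\,\vol(P)=\Vol(P)$.

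For item~(3), the additional ingredient is Ehrhart--Macdonald reciprocity, $\ehr_P(-t)=(-1)^d|tP^\circ\cap\Z^d|$ for positive integers $t$, which I would cite from a standard reference such as Beck and Robins. Evaluating the displayed formula at $t=-1$ makes every summand vanish except $i=d$ (for which $\binom{-1}{d}=(-1)^d$), giving $\ehr_P(-1)=(-1)^d\h_d$; combining with reciprocity yields $\h_d=|P^\circ\cap\Z^d|$.

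Overall, the only truly non-elementary inputs are the two classical black boxes --- Stanley's nonnegativity theorem and Ehrhart--Macdonald reciprocity --- and these are the natural places where ``work'' happens; everything else is bookkeeping with binomial coefficients. There is no serious obstacle once one has the explicit formula for $\ehr_P$ in terms of the $\h_i$.
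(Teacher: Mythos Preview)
Your proposal is correct and aligns with the paper's treatment: the paper does not give a proof of this proposition at all, stating only that ``the nonnegativity part of the statement above is a result of Stanley~\cite{Sta80}, while the rest can be derived from Ehrhart's original approach.'' Your argument is precisely the standard derivation hinted at---expressing $\ehr_P(t)$ in the binomial basis $\binom{t+d-i}{d}$, reading off (1), (2), (4) from special values and the leading coefficient, and invoking Ehrhart--Macdonald reciprocity for (3)---so there is nothing to contrast.
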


The nonnegativity part of the statement above is a result of Stanley 
\cite{Sta80}, while the rest can be derived from Ehrhart's original approach. 
Combinatorial interpretations for the other coefficients are possible, but they
are not as natural as the ones above. 
One of the biggest challenges in Ehrhart Theory is to characterize the $
\h$-vectors of lattice polytopes.

\begin{question}
    \label{question}
    For each $d$, which vectors $(\h_0,\h_1,\ldots,\h_d)$ are $\h$-vectors 
    of some $d$-dimensional lattice polytope? 
\end{question}

This question is broadly open even in dimension three.
In two dimensions the answer to Question~\ref{question} was first given by
Scott~\cite{Sco76}.

\begin{thm}
    \label{thm:scott}
    The vector with integer entries $(1,\h_1,\h_2)$ is the $\h$-vector of a 
    two-dimensional lattice polytope if and only if one of the following 
    conditions holds:
    \begin{center}
        \begin{enumerate}
            \item $\h_2=0$;
            \item $0 < \h_2\leq\h_1\leq 3\h_2+3$;
            \item $(1,\h_1,\h_2)=(1,7,1)$.
        \end{enumerate}
    \end{center}
    The last condition is attained only by the exceptional triangle 
    \[2\D_2 =\conv(\bzero,2\be_1,2\be_2).\]
\end{thm}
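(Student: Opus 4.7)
The plan is to translate the conditions on $(1, \h_1, \h_2)$ into a Pick-type statement on the numbers of interior and boundary lattice points of $P$, and then establish the resulting inequality by a lattice width argument.

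Writing $I = |P^\circ \cap \Z^2|$ and $B = |\partial P \cap \Z^2|$, Proposition~\ref{prop:h^*_basic} gives $\h_2 = I$ and $\h_1 = I + B - 3$. Hence condition $(1)$ says $I = 0$; condition $(2)$ says $B \geq 3$ (automatic) together with $B \leq 2I + 6$; and condition $(3)$ corresponds to $I = 1$, $B = 9$, realised by the exceptional triangle $3\Delta_2 = \conv(\bzero, 3\be_1, 3\be_2)$. For the realisability direction I would exhibit an explicit polygon for each admissible triple: hollow trapezoids for $(1)$, triangles or parallelograms built by stacking width-one layers for $(2)$, and $3\Delta_2$ itself for $(3)$.

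The substance of the theorem is the converse: every $P$ with $I \geq 1$ satisfies $B \leq 2I + 6$, with the single exception $P \cong 3\Delta_2$ for which $B = 2I + 7$. I would split on the lattice width $w$ of $P$. Since a polygon contained in a strip between two consecutive parallel lattice lines has $I = 0$, the assumption $I \geq 1$ forces $w \geq 2$, and a unimodular change of basis places $P \subseteq \R \times [0, w]$ with the width realised vertically. Setting $\ell_k \coloneq |P \cap \{y = k\} \cap \Z^2|$, convexity of $P$ makes $(\ell_k)_{0 \leq k \leq w}$ a concave sequence of positive integers. A slice-by-slice accounting gives
\[
    B = \ell_0 + \ell_w + \sum_{k=1}^{w-1} e_k, \qquad I = \sum_{k=1}^{w-1}(\ell_k - e_k),
\]
where $e_k \in \{0,1,2\}$ is the number of lattice endpoints of the slice $P \cap \{y = k\}$. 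Plugging these into $B - 2I$ and invoking concavity of $(\ell_k)$ (which forces $\sum_{k=1}^{w-1} \ell_k$ to be large compared with $\ell_0 + \ell_w$) bounds $B - 2I$ by $6$ in all but a short, explicit list of borderline slice profiles.

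The main obstacle is the equality analysis. The bounds above are loose in two independent ways -- lateral edges of $P$ can carry lattice points that are not slice endpoints, and the concavity estimate on $\sum \ell_k$ is not always saturated -- so one must track both sources of slack and show that no configuration other than $3\Delta_2$ can push $B - 2I$ up to $7$. Carrying this through, the saturating case is forced to have $w = 3$ with slice profile $(\ell_0, \ell_1, \ell_2, \ell_3) = (4,3,2,1)$ (up to reversal) and every lateral edge of lattice length one, which pins down $P \cong 3\Delta_2$ up to unimodular equivalence.
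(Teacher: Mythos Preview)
The paper does not prove this theorem; it is quoted as Scott's 1976 result \cite{Sco76} with no argument given. So there is no ``paper's proof'' to compare against, and your plan stands on its own.

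Your translation into $I$ and $B$ is correct, and you are right that the exceptional polygon is $3\Delta_2$ with $(I,B)=(1,9)$; the paper's ``$2\Delta_2$'' is a slip (that triangle has $h^*$-vector $(1,3,0)$). The slicing set-up and the formulas
\[
B=\ell_0+\ell_w+\sum_{k=1}^{w-1} e_k,\qquad I=\sum_{k=1}^{w-1}(\ell_k-e_k)
\]
are fine.

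The genuine gap is the concavity claim. Convexity of $P$ makes the \emph{real lengths} $L_k$ of the slices a concave sequence, but the lattice counts $\ell_k=\lfloor b_k\rfloor-\lceil a_k\rceil+1$ are not concave in general, nor even guaranteed to be positive. For a concrete failure take $P=\conv((0,0),(5,0),(0,2))$, which has minimum lattice width $2$ in the $y$-direction and $(\ell_0,\ell_1,\ell_2)=(6,3,1)$; here $2\ell_1=6<7=\ell_0+\ell_2$. Since your bound on $B-2I$ comes precisely from plugging a concavity-based lower bound for $\sum_{k=1}^{w-1}\ell_k$ into
\[
B-2I=\ell_0+\ell_w+3\sum_{k=1}^{w-1}e_k-2\sum_{k=1}^{w-1}\ell_k,
\]
the argument as written does not go through. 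The strategy can be rescued, but you must either work with the concave sequence $(L_k)$ and control the rounding errors $|\ell_k-L_k|\le 1$ uniformly (this is where positivity also needs care), or bypass the issue by combining Pick's theorem with an area-versus-width bound, which is closer to how Scott's inequality is usually proved. Either way, the equality analysis pinning down $3\Delta_2$ will need to be redone once the inequality step is fixed.
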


A generalized version of Theorem~\ref{thm:scott} has been proven in each
dimension and for degree two polytopes in \cite{Tre10}. This has been
generalized further to each degree in \cite{BH17}, showing that there are 
inequalities for the $\h$-coefficients that are \emph{universal}, i.e. not 
depending on dimension and degree.

Some relation among the coefficients of $\h$-polynomial are known, and
summarized in the following proposition. The first one can be the deduced
directly from Proposition~\ref{prop:h^*_basic}, the others come from works
by Stanley~\cite{Sta91} and Hibi~\cite{Hib94}.

\begin{prop}
    \label{prop:h^*-ineq}
    Let $P$ be a $d$-dimensional lattice polytope of degree $s$ with its 
    $h^*$-vector $h^*(P)=(h^*_0,\ldots,h^*_d)$.
    Then:
    \begin{enumerate}
        \item $\h_d \leq \h_1$;
        \item $\h_{d-1} + \cdots + \h_{d-i} \leq \h_2 + \dots + \h_{i+1}$, for 
              $i=1,\ldots, d-1$;
        \item $\h_{0} + \dots + h_{i} \leq h_{s} + \cdots + h_{s-i}$, for 
              $i=0,1,\ldots, s$;
        \item if $\h_d > 0$, then $\h_1 \leq \h_i$, for $i=1,2,\ldots,d-1$.
    \end{enumerate}
\end{prop}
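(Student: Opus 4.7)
The plan is to handle the four inequalities in order of difficulty, isolating item~(1), which is elementary, from items~(2)--(4), which are classical results in Ehrhart theory that I would take from the literature.

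For item~(1), I would simply combine parts~(2) and~(3) of Proposition~\ref{prop:h^*_basic} with the boundary/interior decomposition $|P \cap \Z^d| = |P^\circ \cap \Z^d| + |\partial P \cap \Z^d|$, obtaining
\[
    \h_1 - \h_d \;=\; |\partial P \cap \Z^d| - (d+1),
\]
which is nonnegative because $P$, being a $d$-dimensional lattice polytope, has at least $d+1$ vertices, all of them distinct lattice points on its boundary.

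For items~(2)--(4) the plan is to quote the literature directly: items~(2) and~(4) are due to Hibi~\cite{Hib94}, while item~(3) is Stanley's monotonicity theorem from~\cite{Sta91}. For the Hibi inequalities, the standard strategy combines Ehrhart--Macdonald reciprocity, which relates the $\h$-polynomial of $P$ to the Ehrhart series of its relative interior, with the nonnegativity of $\h$-coefficients; in~(4), the hypothesis $\h_d > 0$ guarantees an interior lattice point of $P$, whose existence provides the shift needed to compare $\h_1$ with the middle coefficients. Stanley's partial-sum inequality in~(3) is established in~\cite{Sta91} through the Cohen--Macaulayness of the associated Ehrhart semigroup ring, which yields a monomial basis whose generating function forces the partial sums from the right end to dominate those from the left.

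The main obstacle, if one insisted on a self-contained argument, would be re-deriving Stanley's monotonicity, since the available proofs rely on commutative algebra machinery (Cohen--Macaulay rings, Hilbert series, graded Betti numbers) well outside the scope of this paper; citing~\cite{Sta91,Hib94} is therefore the efficient choice, leaving only item~(1) to be verified directly from the elementary formulas of Proposition~\ref{prop:h^*_basic}.
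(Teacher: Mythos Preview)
Your proposal is correct and matches the paper's own treatment: the paper does not give a formal proof of this proposition but simply remarks that item~(1) ``can be deduced directly from Proposition~\ref{prop:h^*_basic}'' while ``the others come from works by Stanley~\cite{Sta91} and Hibi~\cite{Hib94}.'' Your explicit verification of~(1) via $\h_1 - \h_d = |\partial P \cap \Z^d| - (d+1) \geq 0$ is precisely the intended derivation, and your attribution of~(2)--(4) to the cited references aligns with the paper.
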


In recent years Stapledon~\cite{Sta09,Sta16} showed the existence of infinite 
new classes of inequalities, giving explicit formulae in small dimensions.

In dimension three, the known inequalities are far from giving a complete
picture of the possible $\h$-vectors.
Nevertheless this case has been solved in the case $\h_3=0$, i.e. for hollow
lattice polytopes. 
In this case a polytope has degree two, and Treutlein's result \cite{Tre10} 
gives a necessary condition, while Henk--Tagami \cite{HT09} prove the 
sufficiency part.

\begin{thm}[{\cite[Theorem~2]{Tre10},\cite[Proposition~2.10]{HT09}}]
    The vector with integer entries $(1,\h_1,\h_2,0)$ is the $\h$-vector
    of a three-dimensional lattice polytope if and only if one of the 
    following conditions holds:
    \begin{enumerate}
        \item $\h_2 = 0$;
        \item $ 0 \leq \h_1 \leq 3\h_2 + 3$;
        \item $(1,\h_1,\h_2,0) = (1,7,1,0)$. 
    \end{enumerate}
    The last condition is attained only by the exceptional simplex
    \[\conv(\bzero,2\be_1,2\be_2,\be_3).\]
\end{thm}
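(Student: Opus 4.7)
The plan is to prove both directions of the equivalence, reducing everything to the two-dimensional case (Scott's theorem) via the lattice pyramid construction.

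\textbf{Necessity.} Let $P$ be a three-dimensional lattice polytope with $h^*$-vector $(1, h^*_1, h^*_2, 0)$. Since $h^*_3 = 0$, the polytope has degree at most $2$. If $h^*_2 = 0$, we are in case~(1). Otherwise the degree is exactly $2$, and Treutlein's generalization of Scott's inequality to degree-two lattice polytopes in arbitrary dimension (\cite[Theorem~2]{Tre10}) forces either $h^*_1 \leq 3h^*_2 + 3$, giving case~(2), or the exceptional pair $(h^*_1, h^*_2) = (7,1)$. Treutlein's argument further shows that every degree-two lattice polytope with $h^*$-vector $(1,7,1,0,\ldots,0)$ is an iterated lattice pyramid over the triangle $2\D_2$; in dimension three the only such polytope is $\Pyr(2\D_2) = \conv(\bzero, 2\be_1, 2\be_2, \be_3)$, which is case~(3).

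\textbf{Sufficiency.} The key input here is that the lattice pyramid construction preserves the $h^*$-polynomial: if $Q \subset \R^d$ has $h^*$-polynomial $\h_Q(z)$, then $\Pyr(Q) \subset \R^{d+1}$ has the same $\h$-polynomial, simply reinterpreted against the denominator $(1-z)^{d+2}$ in place of $(1-z)^{d+1}$. This follows immediately from the telescoping identity $\ehr_{\Pyr(Q)}(t) = \sum_{k=0}^{t} \ehr_Q(k)$, which multiplies the Ehrhart generating series by $\tfrac{1}{1-z}$. Given a target vector $(1, h^*_1, h^*_2, 0)$ satisfying condition~(1) or~(2), Scott's theorem (Theorem~7.1) supplies a lattice polygon $Q \subset \R^2$ with $h^*$-vector $(1, h^*_1, h^*_2)$, and then $P \coloneq \Pyr(Q)$ is a three-dimensional lattice polytope with $h^*$-vector $(1, h^*_1, h^*_2, 0)$. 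Condition~(3) is realized by $\Pyr(2\D_2) = \conv(\bzero, 2\be_1, 2\be_2, \be_3)$ via the same argument applied to Scott's exceptional polygon.

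\textbf{Main obstacle.} All the substantive mathematical content is Treutlein's inequality and the associated classification of the exceptional case in terms of pyramids over $2\D_2$, which I am treating as a black box. Scott's theorem is already cited as Theorem~7.1. Everything else reduces to the elementary observation that the lattice pyramid adds one zero at the end of the $h^*$-vector, so the characterization of hollow three-dimensional $h^*$-vectors is literally the two-dimensional characterization lifted one dimension. The only additional check is that the exceptional uniqueness statement transfers cleanly: a hollow three-dimensional polytope with $h^*$-vector $(1,7,1,0)$ must be a lattice pyramid by Treutlein's classification, and the unique such pyramid in $\R^3$ is $\conv(\bzero, 2\be_1, 2\be_2, \be_3)$.
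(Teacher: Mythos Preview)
Your sufficiency argument has a genuine gap. You propose to realize every admissible vector $(1,h^*_1,h^*_2,0)$ as the $h^*$-vector of a lattice pyramid $\Pyr(Q)$ over a polygon $Q$ with $h^*$-vector $(1,h^*_1,h^*_2)$. But Scott's theorem (Theorem~7.1) imposes the extra constraint $h^*_2 \leq h^*_1$ in dimension two (this is just the inequality $h^*_d \leq h^*_1$ from Proposition~7.4(1)), whereas the three-dimensional statement you are proving has no such lower bound on $h^*_1$ in condition~(2). Concretely, the vector $(1,0,1,0)$ satisfies condition~(2) and is the $h^*$-vector of the empty simplex $\conv(\bzero,\be_1,\be_2,\be_1+\be_2+2\be_3)$, yet no polygon has $h^*$-vector $(1,0,1)$. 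More generally, every vector with $0 \leq h^*_1 < h^*_2$ is admissible in dimension three but inaccessible by your pyramid construction. This is precisely why the paper attributes sufficiency to Henk--Tagami \cite{HT09} rather than deducing it from Scott: genuinely three-dimensional constructions are needed for that range.

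Your necessity direction is fine as stated, since you are invoking Treutlein's result as a black box just as the paper does. One minor caution: the uniqueness of the exceptional polytope in case~(3) is a bit more delicate than ``Treutlein's classification'' alone; you should make sure the source you cite actually pins down the pyramid structure in that case, or supply the short argument yourself.
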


In the following we use the classification of three-dimensional
lattice polytopes to conjecture a set of sharp inequalities describing
the behaviour of three-dimensional polytopes with interior points, i.e.
the polytopes whose Ehrhart coefficient $\h_3$ is nonzero.
The immediate way to do this is by plotting the $\h$-vectors of classified
polytopes (see Appendix~\ref{app:plots}), and try to understand
which inequalities seem to be satisfied.

In what follows, we frequently need to calculate the $\h$-vector of families 
of lattice simplices depending on a parameter.
The following lemma is an example.
Its proof outlines how these kind of results can be proven, and similar
results will be given without proof in the rest of the section.

\begin{lem}
	\label{lem:example}
	The ZPW simplex  
	    \[
	        S^3_k \coloneq \conv\left((0,0,0),(2,0,0),(0,3,0),(0,0,6k+6)\right),
	    \]
	    has $\h$-vector $(1,16k+19,19k+16,k)$. 
\end{lem}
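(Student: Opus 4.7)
The plan is to use the four basic identities for the $h^*$-vector stated in Proposition~\ref{prop:h^*_basic}. Since $\h_0=1$ automatically and since $\h_2$ can be recovered from the others via $\h_0+\h_1+\h_2+\h_3=\Vol(P)$, there are really only three things to compute: the normalized volume, the number of interior lattice points, and the total number of lattice points.

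First, the normalized volume is obtained directly from the defining determinant:
\[
    \Vol(S^3_k) = \left|\det
    \begin{pmatrix} 2 & 0 & 0 \\ 0 & 3 & 0 \\ 0 & 0 & 6k+6 \end{pmatrix}\right|
    = 36(k+1),
\]
which already forces $\h_0+\h_1+\h_2+\h_3 = 36k+36$. Next, $S^3_k$ is cut out by $x,y,z\geq 0$ and $3(k+1)x + 2(k+1)y + z \leq 6(k+1)$, so an interior lattice point satisfies $x,y,z\geq 1$ and $3(k+1)x+2(k+1)y+z < 6(k+1)$. The only admissible values for $(x,y)$ are $(1,1)$, and then $z$ ranges over $\{1,\ldots,k\}$, giving $\h_3 = k$.

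The main (routine) step is counting $|S^3_k \cap \Z^3|$. For each nonnegative integer pair $(x,y)$ with $3x+2y\leq 6$, the admissible $z$-values form the range $0\leq z \leq (k+1)(6-3x-2y)$, contributing $(k+1)(6-3x-2y)+1$ lattice points. Summing over the seven pairs $(0,0),(0,1),(0,2),(0,3),(1,0),(1,1),(2,0)$ gives
\[
    |S^3_k \cap \Z^3| = (k+1)\bigl(6+4+2+0+3+1+0\bigr) + 7 = 16k+23,
\]
whence $\h_1 = |S^3_k \cap \Z^3|-4 = 16k+19$. Finally, subtracting from the volume gives $\h_2 = 36k+36 - 1 - (16k+19) - k = 19k+16$, yielding the claimed $\h$-vector $(1,16k+19,19k+16,k)$.

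No real obstacle arises; the only mildly delicate point is keeping track of the lattice-point count in the $(x,y)$-projection (equivalently, in the triangle $\conv(\bzero,2\be_1,3\be_2)$), which is a finite check. As an alternative one could compute the $h^*$-polynomial from the half-open fundamental parallelepiped of the lifted simplex in $\Z^4$, cataloguing the $36(k+1)$ box points $(\lambda_1/2,\lambda_2/3,\lambda_3/(6k+6))$ by their height $\lceil \lambda_1/2+\lambda_2/3+\lambda_3/(6k+6)\rceil$; this gives all four coefficients simultaneously, but the projection count above is shorter.
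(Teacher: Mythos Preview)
Your proof is correct and follows essentially the same approach as the paper: both compute the volume, the number of interior lattice points, and the total number of lattice points, then invoke Proposition~\ref{prop:h^*_basic}. The only cosmetic difference is that the paper counts lattice points by summing over the relative interiors of the faces of $S^3_k$, whereas you count the $z$-fibers over the seven lattice points of the base triangle $\conv(\bzero,2\be_1,3\be_2)$; these are equivalent bookkeeping schemes and yield the same $|S^3_k\cap\Z^3|=16k+23$.
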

\begin{proof}
    From Proposition~\ref{prop:h^*_basic}, we can write the $\h$-vector of 
    any three-dimensional lattice polytope $P$ in terms of number of lattice 
    points, number of interior lattice points and volume as
    \[
        (1,|P \cap \Z^d|-4, \Vol(P)-|P \cap \Z^d|-|P^\circ \cap \Z^3|+3,
        |P^\circ \cap \Z^3).
    \]
    The normalized volume $\Vol(S^3_k)$ of $S^3_k$ can be calculated trivially 
    and equals $36(k+1)$.
    For the number of interior lattice point we project $S^3_k$ along $\be_3$,
    and we deduce that they have to be all of the form $(1,1,a)$ for $a \geq 1$.
    Note that $(1,1,k+1)$ can be written as a convex combination of the 
    vertices $(2,0,0)$,$(0,3,0)$ and $(0,0,6k+6)$ with weights $\frac{1}{2}$,
    $\frac{1}{3}$,$\frac{1}{6}$, respectively, in particular its is on the 
    boundary of $S^3_k$.
    Therefore the interior points are all those of the form $(1,1,a)$, with
    $1 \leq a \leq k$, and they are exactly $k$.
    With a similar (and easier) argument, one can count the lattice points in
    the relative interior of lower dimensional faces of $S^3_k$.
    By summing everything up, we get $|P \cap \Z^d|=16k+23$, and hence the 
    thesis. 
\end{proof}

We conjecture the following.

\begin{conj}
    \label{conj:main}
    Let $P$ be a three-dimensional lattice polytope having at least one
    interior lattice point.
    Then its $\h$-vector $(1,\h_1,\h_2,\h_3)$ satisfies the following 
    inequalities.
    \begin{enumerate}
        \item $\h_3 \leq \h_1$,
        \item $\h_1 \leq \h_2$,
        \item $\h_2 \leq 19\h_3+16$,
        \item \label{ineq:special} $\h_2 - \h_1 \leq 9 \h_3 + 9$,
        \item $ 5 \h_3 \h_1  + 4 \h_1 + 4 \leq 
              4 {\h_3}^2 + 4 \h_3 \h_2 + 5 \h_2$.
    \end{enumerate}
    Moreover, the fourth inequality holds in the stronger form
    \footnote{
        This inequality was also conjectured by M\'onica Blanco and Lukas 
        Katth\"an (private communication). 
        They expressed it in an equivalent Pick-like form 
        $\Vol(P) \leq 2b + 12 i$ where $b$ and $i$ are the 
        number of lattice points of $P$ on its boundary and in its interior 
        respectively.
    }
    \begin{enumerate}
        \item[(4*)] $\h_2 - \h_1 \leq 9 \h_3 + 7$, 
    \end{enumerate}
    unless $P$ is one of the following exceptional cases (listed together with
    their $\h$-vectors):
    \begin{enumerate}[(i)]
    \item $\conv\left((0,0,0),(1,0,0),(0,1,0),(2,19,25)\right)$, 
          \hfill $(1,3,20,1)$
    \item $\conv\left((0,0,0),(1,0,0),(0,1,0),(3,19,28)\right)$, 
          \hfill $(1,4,22,1)$
    \item $\conv\left((0,0,0),(1,0,0),(0,1,0),(3,13,19),(1,-2,-3)\right)$, 
          \hfill $(1,5,22,1)$
    \item $\conv\left((0,0,0),(1,0,0),(0,1,0),(4,7,11),(-5,-7,-15)\right)$, 
          \hfill $(1,7,24,1)$
    \item $\conv\left((0,0,0),(1,0,0),(0,1,0),(2,2,3),(-21,-8,-25)\right)$,
          \hfill $(1,11,28,1)$
    \item $\conv\left((0,0,0),(1,0,0),(0,1,0),(5,17,42)\right)$, 
          \hfill $(1,11,29,1)$
    \item $\conv\left((0,0,0),(1,0,0),(0,1,0),(5,7,17),(1,-2,-5)\right)$, 
          \hfill $(1,12,29,1)$
    \item $\conv\left((0,0,0),(1,0,0),(0,1,0),(7,2,9),(-7,-3,-15)\right)$, 
          \hfill $(1,13,30,1)$
    \item $\conv\left((0,0,0),(1,0,0),(0,1,0),(0,0,1),(5,42,-25)\right)$, 
          \hfill $(1,14,31,1)$
    \item $\conv\left((0,0,0),(1,0,0),(0,1,0),(5,23,45)\right)$, 
          \hfill $(1,8,34,2)$
    \item $\conv\left((0,0,0),(1,0,0),(0,1,0),(3k+3,9k+8,18k+15)\right)$,
          \hfill $(1,4k+3,13k+11,k)$
    \item $\conv\left((0,0,0),(1,0,0),(0,1,0),(3,12k+8,18k+15)\right)$,
          \hfill $(1,4k+3,13k+11,k)$
    \end{enumerate}
    
    where, in the last two cases, $k \in \Z_{\geq 1}$.
    Inequalities (3) and (5) are both attained with equality if 
    and only if, for some $k \geq 1$, $P$ is one of the ZPW simplices
    \[
        S^3_k \coloneq \conv\left((0,0,0),(2,0,0),(0,3,0),(0,0,6k+6)\right),
    \]
    having $\h$-vector $(1,16k+19,19k+16,k)$, or the special ``almost-ZPW'' 
    simplex
    \[
        \tilde{S}^3_1\coloneq\conv\left((0,0,0),(2,0,0),(0,6,0),(0,0,6)\right),
    \]
    having $\h$-vector $(1,35,35,1)$.

    Inequalities (1) and (4*) are both attained with equality if and
    only if, for some $k\geq 1$, $P$ is one of the Duong simplices
    \[
        D^3_k\coloneq\conv\left((0,0,0),(1,0,0),(0,1,0),(3,6k+1,12k+8)\right)
    \]
    having $\h$-vector $(1,k,10k+7,k)$.
\end{conj}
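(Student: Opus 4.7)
The plan is to peel off the easy parts first. Inequality~(1), $\h_3 \leq \h_1$, is precisely the Hibi bound $\h_d \leq \h_1$ of Proposition~\ref{prop:h^*-ineq}(1), and inequality~(2), $\h_1 \leq \h_2$, is the case $i=2$ of Proposition~\ref{prop:h^*-ineq}(4), which is valid under the standing hypothesis $\h_3 \geq 1$. Thus genuine work is needed only for (3), (4*) and (5), together with the explicit characterization of the extremal families and the twelve sporadic exceptions.

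For (3) and (5) the extremal polytopes are essentially the ZPW simplices $S^3_k$, whose $\h$-vector $(1,16k+19,19k+16,k)$ was recorded in Lemma~\ref{lem:example}. Translating both inequalities through Proposition~\ref{prop:h^*_basic} turns them into statements about $\Vol(P)$, $|P \cap \Z^3|$ and $|P^\circ \cap \Z^3|$: (3) becomes a linear refinement of Conjecture~\ref{conj:volume_bound} with the sharp constant $19$, and (5) becomes a quadratic strengthening of the same. I would first select two collinear interior lattice points of $P$, project along that direction, and combine the width-one structure of the projection with Scott's theorem (Theorem~\ref{thm:scott}) on the projected polygon; the one-parameter Ehrhart behaviour along the projection line should then force both the linear and the quadratic bounds.

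For (4*), the Pick-like reformulation $\Vol(P) \leq 2b + 12i$ points to a direct star-triangulation argument from an interior lattice point $q$: each simplex $\conv(q,F)$ in the star is controlled by the lattice area of the facet $F$, and summing over the boundary reduces the inequality to applying Scott's theorem face by face. The weaker inequality (4) should fall out immediately, while the stronger bound (4*) requires pinpointing exactly when slack is lost in those estimates; the twelve sporadic exceptions (i)--(xii) are expected to arise as precisely the polytopes for which the star-triangulation from every choice of $q$ is simultaneously rigid. Once this rigidity is described, extracting the finite exceptional list is a direct search inside the classification of Theorem~\ref{thm:result}.

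The main obstacle is that no existing technique produces sharp $\h^*$-inequalities in dimension three uniformly, which is precisely why the statement is phrased as a conjecture. A pragmatic route that stays entirely within the paper's framework is to prove an improved version of Conjecture~\ref{conj:volume_bound} valid for arbitrary three-dimensional lattice polytopes, not only the special families treated in \cite{AKN15,BKN16,Ave18}, and then combine it with the databases produced by Algorithm~\ref{alg:pol}. Pikhurko's bound (Theorem~\ref{thm:pik}) is vastly too weak to suffice, so the truly hard step is the missing uniform linear volume bound $\Vol(P) \leq C\,\h_3$ with $C$ close to $49$; modulo such a bound, all five inequalities reduce to checking a finite list of polytopes, and both the extremal families and the twelve sporadic exceptions can be extracted by inspection of the classification.
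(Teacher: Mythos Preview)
The statement you are trying to prove is labeled \emph{Conjecture} in the paper, and the paper does not supply a proof. All the paper does is observe (immediately after the statement) that inequalities (1) and (2) follow from Proposition~\ref{prop:h^*-ineq}, exactly as you note, and then give supporting evidence for (3)--(5) and (4*) via the classification data, the vertex description in Proposition~\ref{prop:vs}, and the explicit families in Proposition~\ref{prop:v4}. There is nothing further to compare against.

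Your proposal is therefore not in competition with a paper proof; it is an outline of how one might attack an open problem. As such, it does not close the gap, and you say so yourself: the projection-plus-Scott argument you sketch for (3) and (5) presupposes two collinear interior points, which already fails for $\h_3=1$ (the hardest case, containing most of the sporadic exceptions), and the star-triangulation heuristic for (4*) does not by itself yield the sharp constant $7$ nor isolate the infinite families (xi)--(xii). Your final paragraph is honest about the real obstruction: everything reduces to a uniform linear volume bound $\Vol(P)\le C\,\h_3$ with $C$ close to the conjectural value, and that bound is precisely Conjecture~\ref{conj:volume_bound} in dimension three, which is open. So the proposal correctly reproduces the two known inequalities and gives reasonable heuristics for the rest, but it is not a proof, and neither is anything in the paper.
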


Inequalities \textit{(1)} and \textit{(2)} of Conjecture~\ref{conj:main} are 
already known to be true, as they are a consequence of 
Proposition~\ref{prop:h^*-ineq}. 
Note that Conjecture~\ref{conj:main} generalizes the three-dimensional case of 
Conjecture~\ref{conj:volume_bound}, for the maximal volume of polytopes 
having interior lattice points, and Conjecture~\ref{conj:duong} (Duong 
Conjecture).
Moreover it generalizes Conjecture~6.1 of \cite{BK16}, on the maximal 
$\h$-coefficients of lattice polytopes with interior lattice points.

Note that inequality \textit{(5)} of Conjecture~\ref{conj:main} is non-linear.
However, fixing $\h_3$ to be larger than one, the inequalities are linear and 
define a pentagon (see Figure~\ref{fig:conj}).
In the special case when $\h_3=1$ equalities \textit{(2)} and \textit{(5)} 
coincide.
We now give a vertex representation of such a pentagon.

\begin{prop}
    \label{prop:vs}
    For each $\h_3 \in \Z_{>1}$, inequalities (1),(2),(3),(4*),(5) define
    a pentagon in $\R^4$ with vertices
    \begin{align*}
        \bv{1} & \coloneq (1,\h_3,\h_3,\h_3),\\
        \bv{2} & \coloneq (1,4\h_3+4,4\h_3+4,\h_3),\\
        \bv{3} & \coloneq (1,16\h_3+19,19\h_3+16,\h_3),\\
        \bv{4} & \coloneq (1,10\h_3+9,19\h_3+16,\h_3),\\
        \bv{5} & \coloneq (1,\h_3,10\h_3+7,\h_3).
    \end{align*}
    Moreover,
    \begin{itemize}
        \item $\bv_1$ is realized as the $\h$-vector of the polytope
              \[
                  \conv\left((0,0,0),(1,0,0),(0,1,0),(3,3\h_3,3\h_3+1)\right),
              \]
        \item $\bv_2$ is realized as the $\h$-vector of the polytope
              \[
                  \conv\left((0,0,0),(1,0,0),(2,3,0),(2,3,3+3\h_3)\right),
              \]
        \item $\bv_3$ is realized as the $\h$-vector of the ZPW simplex 
              $S^3_{\h_3}$,
        \item $\bv_5$ realized as the $\h$-vector of the Duong simplex 
              $D^3_{\h_3}$.
          \end{itemize}
\end{prop}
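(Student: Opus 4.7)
The plan is to fix $\h_3 = n \in \Z_{\geq 2}$ and reduce everything to a planar problem. With $n$ fixed, inequality (5) — which is \emph{a priori} quadratic in the $\h$-vector — simplifies to the linear inequality $(5n+4)\h_1 - (4n+5)\h_2 \leq 4(n^2-1)$, so inequalities (1)--(5) jointly cut out a convex polygon $Q_n$ in the coordinates $(\h_1,\h_2)$.

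The first step is to check, by direct substitution, that each claimed point $\bv_i$ is tight at exactly two of the five bounding inequalities, arranged in the cyclic pattern (1)--(2)--(5)--(3)--(4*). The tightness verifications are explicit: for instance, $\bv_3$ satisfies $\h_2 = 19n+16$ (equality in (3)) and a short expansion shows $(5n+4)(16n+19)-(4n+5)(19n+16) = 4n^2-4$ (equality in (5)); the analogous computations for the other four points are equally mechanical. Then one verifies that the remaining three inequalities hold strictly at each $\bv_i$. These are short comparisons of linear (and one quadratic) expressions in $n$; the only delicate one is (2) at $\bv_3$, which reads $16n+19 \leq 19n+16$ and becomes strict precisely for $n\geq 2$. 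This is what makes the hypothesis $\h_3 > 1$ essential: at $n=1$ inequalities (2) and (5) coincide and $Q_1$ collapses to a quadrilateral.

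To confirm these five points are the \emph{only} vertices, I would go through the non-adjacent pairs among the five bounding lines — those not of the form $\{(1),(2)\}$, $\{(2),(5)\}$, $\{(5),(3)\}$, $\{(3),(4^*)\}$, $\{(4^*),(1)\}$ — and check that each such pairwise intersection violates one of the other three inequalities. That is five additional linear arithmetic checks. Together with the convexity of $Q_n$, this shows $Q_n = \conv(\bv_1,\ldots,\bv_5)$ is a pentagon with the five listed vertices in cyclic order.

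For the realization statements, the tool is Proposition~\ref{prop:h^*_basic}, rewritten in dimension three as
\[
    \h(P) = \bigl(1,\ |P\cap\Z^3|-4,\ \Vol(P) - |P\cap\Z^3| - |P^\circ\cap\Z^3| + 3,\ |P^\circ\cap\Z^3|\bigr).
\]
Each realization then reduces to computing a $3\times 3$ determinant and counting lattice and interior lattice points. The cases $\bv_3 = \h(S^3_n)$ and $\bv_5 = \h(D^3_n)$ are already recorded (Lemma~\ref{lem:example} and the analogous computation for the Duong simplex). For $\bv_1$ and $\bv_2$ the normalized volumes are given by the upper-triangular determinants $3n+1$ and $9(n+1)$, and one counts lattice points by slicing the simplex with horizontal planes $\{x_3 = c\}$ and summing. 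The main obstacle is the interior-lattice-point count in the two new cases: both simplices are long and thin, so one must identify the facet-defining linear forms explicitly and verify that the integer points satisfying all four strict inequalities number exactly $n$. This is routine but needs careful bookkeeping.
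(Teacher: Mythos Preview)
The paper states Proposition~\ref{prop:vs} without proof, treating both the vertex computation and the $\h$-vector realizations as routine verifications left to the reader (the latter to be done in the spirit of Lemma~\ref{lem:example}). Your proposal correctly fills in this omitted argument: the linearization of inequality~(5) for fixed $\h_3$, the cyclic tightness pattern (1)--(2)--(5)--(3)--(4*) at the five vertices, the check that non-adjacent intersections fall outside the feasible region, and the observation that $\h_3 > 1$ is what keeps (2) and (5) from coinciding are all exactly right and match what the paper implicitly assumes.
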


\begin{figure}[ht]
    \centering
    \includegraphics{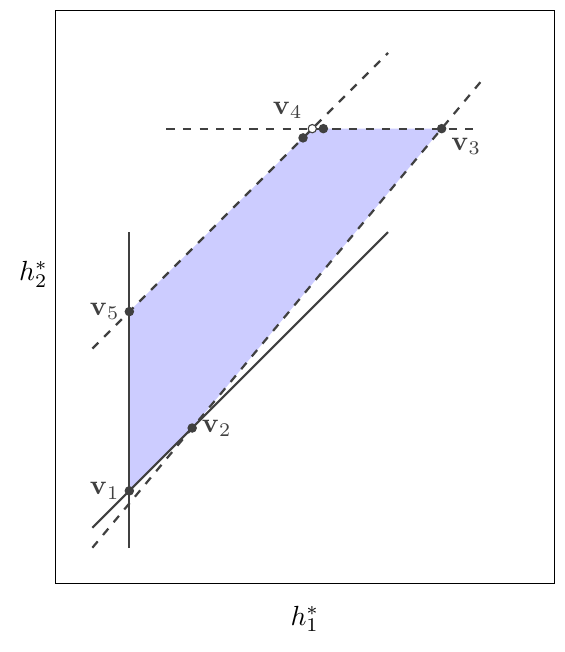}
    \caption{
        The pentagon defined by Conjecture~\ref{conj:main}, for a fixed $\h_3$.
        The dashed lines represent inequalities \textit{(3)}, \textit{(4*)} and 
        \textit{(5)}, which are conjectural.
        Note that the vertex $\bv_4$ seems not to be realized as the $\h$-vector
        of any lattice polytope, anyway there are other realizable $\h$-vector 
        which are very close to it (see Proposition~\ref{prop:v4} and the 
        previous discussion).
    }
    \label{fig:conj}
\end{figure}

Note that we do not have a candidate polytope realizing the vertex $\bv_4$, for
each value of $\h_3$. 
From the data available, it seems like such $\h$-vector is never attained for 
any number of interior point. 
Anyway, in order to show that inequalities $\textit{(3)}$ and \textit{(4*)} are
actually sharp, we give examples of polytopes having, for each $\h_3$, 
$\h$-vector ``close enough'' to $\bv_4$.
We remark that, in this way, there should be an additional inequality in 
Conjecture~\ref{conj:main} cutting out the vertex $\bv_4$, and creating two
additional ones.
Anyway the distance of the two new vertices from $\bv_4$ is fixed and does not
depend on the value of $\h_3$, so we decided not to include it.

\begin{prop}
    \label{prop:v4}
    For each positive integer value of $\h_3$, the $\h$-vectors
    \begin{itemize}
        \item $(1,10\h_3+11,19\h_3+16,\h_3)$,
        \item $(1,10\h_3+9,19\h_3+15,\h_3)$,
        \item $(1,10\h_3+7,19\h_3+14,\h_3)$,
    \end{itemize}
    are attained, respectively, by the polytopes
    \begin{itemize}
        \item
            $
            \!
            \begin{aligned}[t]
                \conv\left((1,0,0),(2,0,0),(0,1,0),(0,3,0),(0,0,6\h_3+5),
                (1,0,3\h_3+3)\right),
            \end{aligned}    
            $
        \item
            $
            \!
            \begin{aligned}[t]
                \conv(&(0,0,0),(1,0,0),(0,1,0),(9\h_3+8,6\h_3+5,18\h_3+15),\\
                      &(12\h_3+10,8\h_3+7,24\h_3+20)),
            \end{aligned}    
            $
        \item
            $
            \!
            \begin{aligned}[t]
                \conv(&(0,0,0),(1,0,0),(0,1,0),(6\h_3+5,3\h_3+3,18\h_3+15),\\
                      &(8\h_3+5,4\h_3+3,24\h_3+14)).
            \end{aligned}    
            $
    \end{itemize}
\end{prop}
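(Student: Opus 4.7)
By Proposition~\ref{prop:h^*_basic}, the $\h$-vector of any three-dimensional lattice polytope $P$ is determined by the triple $(|P \cap \Z^3|, |P^\circ \cap \Z^3|, \Vol(P))$ via $\h_0 = 1$, $\h_1 = |P \cap \Z^3| - 4$, $\h_3 = |P^\circ \cap \Z^3|$, and $\h_2 = \Vol(P) - 1 - \h_1 - \h_3$. The plan is therefore to compute these three quantities as explicit functions of $\h_3$ for each of the three polytopes in the statement, mirroring the strategy of the proof of Lemma~\ref{lem:example}.

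First I would compute $\Vol(P)$. None of the three polytopes is a simplex (they have six, five, and five vertices respectively), so I would fix an explicit triangulation using the listed vertices and sum the determinantal volumes of the resulting tetrahedra. Each tetrahedral volume is the absolute value of a $3 \times 3$ determinant whose entries are linear in $\h_3$, giving in each case a $\Vol(P)$ that is linear in $\h_3$.

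To count $|P \cap \Z^3|$ and $|P^\circ \cap \Z^3|$, I would project along $\be_3$ (the ``tall'' axis in each example) onto a two-dimensional lattice polygon $\pi(P)$, and for every lattice point $q \in \pi(P)$ count the integers in the fiber $P \cap \pi^{-1}(q)$. The endpoints of such a fiber are the values of the two facet inequalities of $P$ that bound it from above and below at $q$, and these are rational linear functions of $\h_3$ on each column. A lattice point $(q, c)$ lies in $P^\circ$ precisely when $q$ is in the relative interior of $\pi(P)$ and $c$ lies strictly between the two fiber endpoints; summing over the finitely many columns of $\pi(P)$ then yields both counts as linear expressions in $\h_3$.

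The main obstacle will be the careful fiber arithmetic for the second and third polytopes, whose slanted facets involve coefficients such as $12\h_3 + 10$ and $24\h_3 + 20$, and whose projections contain columns with non-trivial divisibility conditions in $\h_3$. Verifying uniformly in $\h_3 \geq 1$ that each column contributes the correct number of integer fiber points reduces to a finite set of elementary divisibility and inequality checks. Substituting the resulting triples into Proposition~\ref{prop:h^*_basic} then produces the three claimed $\h$-vectors.
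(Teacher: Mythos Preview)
Your direct strategy is sound in principle, but it diverges from the paper's argument and understates one difficulty. The paper does \emph{not} compute the $\h$-vectors of the three non-simplicial polytopes directly. Instead it first applies the Lemma~\ref{lem:example} technique to three auxiliary \emph{simplices}---namely $S^3_{\h_3}$, $\conv((0,0,0),(1,0,0),(0,1,0),(12\h_3+10,6\h_3+6,36\h_3+30))$, and $\conv((0,0,0),(1,0,0),(0,1,0),(12\h_3+8,6\h_3+5,36\h_3+24))$---obtaining $\h$-vectors $(1,16\h_3+19,19\h_3+16,\h_3)$, $(1,16\h_3+14,19\h_3+15,\h_3)$, and $(1,16\h_3+9,19\h_3+14,\h_3)$. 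These already have the correct $\h_2$ and $\h_3$. The stated polytopes are then obtained from these simplices by repeatedly truncating unimodular corner simplices; each such truncation removes one lattice point and one unit of volume while leaving the interior lattice points untouched, so $\h_1$ drops by one and $\h_2,\h_3$ stay fixed. This two-step route is what makes the computation manageable: for a simplex the projection/fiber count is a single barycentric argument, and the truncation step is a one-line Ehrhart bookkeeping fact.

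By contrast, your plan attacks the five- and six-vertex polytopes head-on. Note that for the second and third polytopes the image $\pi(P)$ under projection along $\be_3$ has vertices with coordinates of order $\h_3$, so the number of columns is not a fixed finite set but grows linearly (and the polygon area quadratically) in $\h_3$; the polytope is thin, and most fibers are empty or a single point. Your phrase ``summing over the finitely many columns'' therefore hides a genuinely parametric sum that must be organized carefully, not ``a finite set of elementary checks''. This is still doable, but it is precisely the bookkeeping the paper sidesteps by passing through simplices first and then cutting corners.
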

\begin{proof}
    Using the same technique used in Lemma~\ref{lem:example}, one can prove that the simplices
    \begin{itemize}
        \item $S^3_{\h_3}$,  
        \item $\conv\left(
              (0,0,0),(1,0,0),(0,1,0),(12\h_3+10,6\h_3+6,36\h_3+30)
              \right)$,  
        \item $\conv\left(
              (0,0,0),(1,0,0),(0,1,0),(12\h_3+8,6\h_3+5,36\h_3+24)
              \right)$,  
          \end{itemize}
    have $\h$-vectors respectively equal to
    \begin{itemize}
        \item $(1,16\h_3+19,19\h_3+16,\h_3)$,
        \item $(1,16\h_3+14,19\h_3+15,\h_3)$,
        \item $(1,16\h_3+ 9,19\h_3+14,\h_3)$.
    \end{itemize}
    From this, one can obtain the three polytopes starting from the simplices,
    and progressively cutting out unimodular simplices. 
    At each cut, $\h_1$ drops by one, while $\h_2$ stays the same.
\end{proof}

As a final observation for this section, we plot heat diagrams of the
distribution of $\h$-vectors of three-dimensional lattice polytopes having one 
and two interior lattice points. 
From Figure~\ref{fig:1-2} one can note that Conjecture~\ref{conj:main} seems 
to describe accurately the behaviour of $\h$-vectors in dimension three, and 
that most of the $\h$-vectors seems to be in the center of the pentagon.

\begin{figure*}[ht]
    \centering
    \begin{adjustbox}{minipage=\linewidth,scale=0.9}
    \begin{subfigure}[t]{0.45\textwidth}
    \centering
    \includegraphics{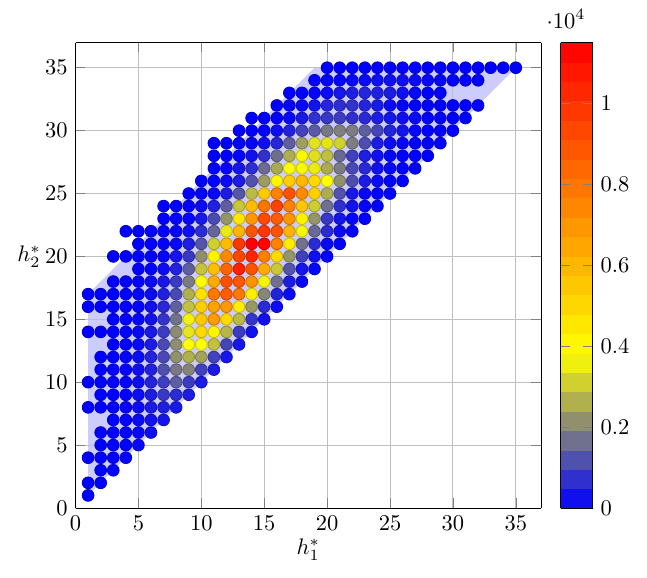}
    \caption{$\h_3 = 1$}
    \end{subfigure}%
    ~
    \begin{subfigure}[t]{0.45\textwidth}
    \centering
    \includegraphics{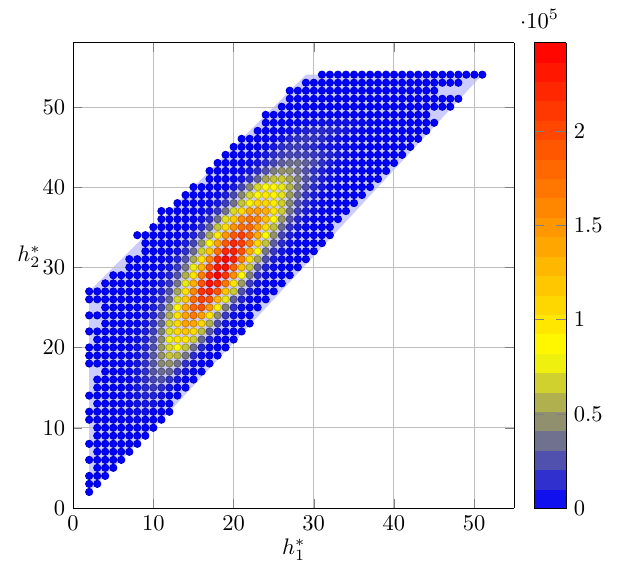}
    \caption{$\h_3 = 2$}
    \end{subfigure}
    \end{adjustbox}
    \caption{
        Heat diagrams for $\h$-vector of three-dimensional polytopes with
        one and two interior lattice points (from \cite{Kas10,BK16}).
        Each $\h$-vector is coloured according to the number of lattice 
        polytopes attaining it.
    }
    \label{fig:1-2}
\end{figure*}

\section{Final examples}
\label{sec:examples}

In this final section we use the classification to explicitly study how common
are some of the most studied properties of lattice polytopes.

In the literature it is possible to find a vast multitude of hierarchies
ordering lattice polytopes with a chain of progressively more restricting
properties.
Having the Integer Decomposition Property (defined in Section~\ref{sec:smooth}),
plays usually a central role in such a hierarchy, due to its
importance in algebraic and optimization contexts.
Here we additionally focus on the following properties

\begin{defn}
	Let $P \subset \R^d$ be a $d$-dimensional lattice polytope. We say that
	$P$ is \emph{spanning} if its lattice points affinely span $\Z^d$, i.e. if
    \[
	    \Z((P-\bp) \cap \Z^d) = \Z^d,
    \]
    for any $\bp \in P \cap \Z^d$.
    We say that $P$ \emph{very ample} if for each vertex $\bv$ of $P$ the
    lattice points in the tangent cone $\R_{\geq 0} (P-\bv)$ are sums of
    lattice points of $P-\bv$, i.e if
    \[
	    \R_{\geq 0} (P-\bv) \cap \Z^d = \Z_{\geq 0}((P-\bv) \cap \Z^d).
    \]
    We say that $P$ \emph{has a unimodular cover} if there exist
    unimodular lattice simplices $S_1,\ldots,S_n \subset \R^d$ such that
    $P =  S_1 \cup \cdots \cup S_n$.
    Finally, we say that $P$ \emph{has a unimodular triangulation} if $P$
    admits a triangulation in unimodular lattice simplices.
\end{defn}
It is easy to verify that such properties are given in ascending order
of restrictiveness, with the IDP property being in the middle, i.e

\vspace{0.2cm}
\begin{center}
	\begin{tabular} {@{}c@{}}unimodular \\ triangulation \end{tabular}
	$\Rightarrow$
	\begin{tabular} {@{}c@{}}unimodular \\ cover \end{tabular}
	$\Rightarrow$
	IDP
	$\Rightarrow$
	very ample
	$\Rightarrow$
	spanning.
\end{center}
\vspace{0.2cm}
Notice that in dimension two all these properties are always
satisfied by all polytopes.
On the other hand, in higher dimensions the opposite inequalities
are known to be false.
While counterexamples to the last implication (very ampleness implies
spanning) are easy to produce, 
the first example of a very ample but not IDP lattice polytope
was given in dimension five by Bruns and Gubeladze \cite{BG02}, who
later gave an example in dimension three \cite[Exercise~2.24]{BG09}.
Examples of IDP polytopes not having a unimodular cover have also
been given by Bruns and Gubeladze in dimension five \cite{BG99}.
The first example of a three-dimensional polytope which has a
unimodular but does not have a unimodular triangulation has been
given by Kantor and Sarkaria \cite{KS03}, although the first 
example, in dimension four, appears in \cite{BGT97}.

By looking at the database of three-dimensional polytopes classified
in this paper, we can easily find examples of lattice polytopes which are
spanning but not very ample, examples of lattice polytopes which are very
ample but not IDP, and examples IDP lattice polytopes not admitting a 
unimodular triangulation. 
Additionally we can be sure that the following examples are the smallest possible,
i.e. those having smallest possible dimension and volume.

\begin{thm}
	The polytope 
	\[
		\conv(\{\bzero,\be_1,\be_2,\be_3,\be_1+\be_2+3\be_3\})
	\]
	is spanning but not very ample. The polytope 
	\[
		\conv(\{\bzero,\be_1,\be_2,\be_1 + \be_2 + 2 \be_3,\be_1 + \be_2 + 3 \be_3,
		\be_1 - \be_3,\be_2 - \be_3,\be_3\})
	\]
	is very ample but not IDP. The polytope
	\[
		\conv(\{\bzero,\be_1,\be_2,\be_1 + \be_2, \be_3, \be_1 + 2 \be_2 - \be_3,
		3 \be_1 + \be_2 - \be_3, -2 \be_1 - \be_2 + \be_3\})
	\] 
	has a unimodular cover but does not have a unimodular triangulation.
	Such examples are those of minimal volume in dimension three.
\end{thm}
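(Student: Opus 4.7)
The plan is to verify each of the three example polytopes has the claimed properties by direct computation, and then establish minimality by running a systematic check over the database of three-dimensional polytopes classified in Theorem~\ref{thm:result}.

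For each of the three polytopes $P$, I would verify the two claimed properties as follows. To check that $P$ is spanning, it suffices to translate one lattice point to the origin and verify that the Smith normal form of the matrix of remaining lattice points has determinant $\pm 1$. Very ampleness is checked by computing, at each vertex $\bv$, the Hilbert basis of the tangent cone $\R_{\geq 0}(P-\bv) \cap \Z^d$ (using software such as Normaliz) and comparing it with the lattice points $(P-\bv) \cap \Z^d$; $P$ is very ample precisely when the latter contains the former. The IDP property is then checked via a Hilbert basis computation on the homogenization cone $\conv(\{1\} \times P) \subset \R^{d+1}$: $P$ is IDP if and only if the Hilbert basis lies at height one. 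For the third example, one exhibits an explicit unimodular cover by listing triples of lattice points whose convex hulls are unimodular simplices and whose union is $P$. Non-very-ampleness and non-IDP are certified by exhibiting an explicit offending lattice point (in the tangent cone, respectively in $2P$) together with a short proof that it cannot be written as a required sum; such a point can be located automatically by the same Hilbert basis computation.

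The most delicate check is that the third polytope admits no unimodular triangulation. Since $P$ has only eight vertices and $|P\cap \Z^3|$ is small, one can enumerate all triangulations of $P$ using lattice points as possible vertices (via a depth-first search constructing triangulations face-by-face, or with a package such as TOPCOM), and for each full triangulation verify it contains a non-unimodular simplex. An alternative certificate is to exhibit an edge whose link in any triangulation with unimodular simplices would have to satisfy an arithmetic condition the polytope does not admit.

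Finally, for minimality, I would iterate over the database of $6\,064\,034$ three-dimensional lattice polytopes of volume at most $36$, ordered by normalized volume, and run the four tests (spanning, very ample, IDP, unimodular triangulation) on each. For each of the three implications in the hierarchy
\[
\text{spanning} \Leftarrow \text{very ample} \Leftarrow \text{IDP} \Leftarrow \text{unimodular cover} \Leftarrow \text{unimodular triangulation},
\]
we record the smallest-volume polytope witnessing a strict separation and check that it matches the given example. The main obstacle in this sweep is testing the existence of a unimodular triangulation for each polytope in the database. Since most polytopes in dimension three and small volume do admit such a triangulation (so a triangulation is quickly found and the test terminates early), and for the few that do not the total number of lattice points is small, a complete enumeration of candidate triangulations remains feasible; this is consistent with the computational timings reported in Section~\ref{sec:implementation}.
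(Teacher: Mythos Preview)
Your proposal is correct and matches the paper's approach: the theorem is a computational result obtained by (i) verifying the stated properties of the three explicit polytopes directly, and (ii) sweeping the database of three-dimensional lattice polytopes from Theorem~\ref{thm:result} to certify minimality. The paper does not give further argument beyond pointing to the database and the tables in Appendix~\ref{app:tables}, which record for each volume how many polytopes are spanning, very ample, IDP, have a unimodular cover, and have a unimodular triangulation; the first volume at which two adjacent columns differ gives the minimal counterexample.

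One small correction: in your minimality sweep you list ``four tests (spanning, very ample, IDP, unimodular triangulation)'', but for the third separation you also need a unimodular-cover test. Concretely, among polytopes of a given volume that fail to have a unimodular triangulation, you must still check which of them admit a unimodular cover in order to locate the first genuine witness. The paper does run this test (the UC column in the tables), though it notes the check is expensive and was only carried out up to volume~$16$ in dimension three; this suffices since the third example has volume~$12$, which is precisely the first row where the UC and UT columns disagree.
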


The last example also appears in \cite{DST01}.
No IDP polytope without a unimodular cover has been found in dimesion three,
but not all the enumerated three polytopes could be checked, as the algorithm implemented
for checking the existence of a unimodular cover is computationally expensive.
The presence of such polytopes in higher dimension has already been mentioned,
but it makes sense to wonder whether being IDP and having a unimodular cover are
equivalent properties in dimension three.

\begin{question}
Is there a three-dimensional IDP polytope that does not have a unimodular cover?
\end{question}

In this last part of the paper di discuss about properties of the $\h$-vectors 
of very ample lattice polytopes.
We call a sequence $a_0, a_1 , \ldots, a_n$ of real numbers \emph{unimodal}
if, for some $0 \leq k \leq n$,
\[
	a_0 \leq \cdots \leq a_{k-1} \leq a_k \geq a_{k+1} \geq \cdots \geq a_n.
\]
We call $a_0, a_1 , \ldots, a_n$ \emph{log-concave} if, for all $ 1 \leq i \leq n-1$
\[
	a_{i-1} a_{i+1} \leq a_i^2.
\]
If all the $a_i$ are nonnegative, then log-concavity implies unimodality.
It is a long standing open problem (originally posed by Stanley) to understand 
whether all IDP polytopes have unimodal, or even log-concave, $\h$-vector 
(see Braun's survey \cite{Br16}).
In \cite[p.39]{HHJN07} it is shown that, by relaxing the IDP property to very 
ampleness, it is possible to lose log-concavity.
This is done by giving an example of nine-dimensional very ample (but non IDP)
lattice polytope.

By looking at our database it turns out that this kind of examples can be small
and exist already in dimension three, as the polytope
\[
	\conv(\bzero,e_1,e_2,e_3,e_1+e_3,e_2+e_3,e_1+e_2+16e_3,e_1+e_2+17 e_3),
\]
is very ample and has $\h$-vector $(1,4,17,0)$, which is not log-concave.
This example can be easily generalized by considering the three-dimensional lattice
polytope
\[
	Q \coloneqq \conv(\bzero,e_1,e_2,e_3,e_1+e_3,e_2+e_3,e_1+e_2+ke_3,e_1+e_2+(k+1) e_3),
\]
where $k$ is a nonnegative integer.
It is easy to verify that $Q$ is a very ample lattice polytope with $\h$-vector
$(1,4,k+1,0)$, which, for $k \geq 16$, is not log-concave.
Note that $Q$ fails to be IDP for $k \geq 4$. This kind of constructions are
called \emph{segmental fibration} and have been used in \cite{BDGM15} to generate
non IDP but very ample polytopes.

\bibliographystyle{alpha}

\begin{thebibliography}{BDGM15}

\bibitem[AKN15]{AKN15}
Gennadiy Averkov, Jan Kr\"umpelmann, and Benjamin Nill.
\newblock Largest integral simplices with one interior integral point: solution
  of {H}ensley's conjecture and related results.
\newblock {\em Adv. Math.}, 274:118--166, 2015.

\bibitem[AKW17]{AKW17}
Gennadiy Averkov, Jan Kr\"umpelmann, and Stefan Weltge.
\newblock Notions of maximality for integral lattice-free polyhedra: the case
  of dimension three.
\newblock {\em Math. Oper. Res.}, 42(4):1035--1062, 2017.

\bibitem[Ave18]{Ave18}
Gennadiy Averkov.
\newblock Local optimality of {Z}aks-{P}erles-{W}ills simplices.
\newblock \href{http://arxiv.org/abs/1803.04852}{\texttt{arXiv:1803.04852
  [math.CO]}}, 2018.

\bibitem[AWW11]{AWW11}
Gennadiy Averkov, Christian Wagner, and Robert Weismantel.
\newblock Maximal lattice-free polyhedra: finiteness and an explicit
  description in dimension three.
\newblock {\em Math. Oper. Res.}, 36(4):721--742, 2011.

\bibitem[Bar71]{Bar71}
David~W. Barnette.
\newblock The minimum number of vertices of a simple polytope.
\newblock {\em Israel J. Math.}, 10:121--125, 1971.

\bibitem[Bat81]{Bat81}
Victor Batyrev.
\newblock Toric {F}ano threefolds.
\newblock {\em Izv. Akad. Nauk SSSR Ser. Mat.}, 45(4):704--717, 927, 1981.

\bibitem[Bat99]{Bat99}
Victor Batyrev.
\newblock On the classification of toric {F}ano 4-folds.
\newblock {\em J. Math. Sci. (New York)}, 94(1):1021--1050, 1999.
\newblock Algebraic geometry, 9.

\bibitem[BB92]{BB92}
A.~A. Borisov and L.~A. Borisov.
\newblock Singular toric {F}ano three-folds.
\newblock {\em Mat. Sb.}, 183(2):134--141, 1992.

\bibitem[BCP97]{BCP972}
Wieb Bosma, John Cannon, and Catherine Playoust.
\newblock The {M}agma algebra system. {I}. {T}he user language.
\newblock {\em J. Symbolic Comput.}, 24(3-4):235--265, 1997.
\newblock Computational algebra and number theory (London, 1993).

\bibitem[BDGM15]{BDGM15}
Matthias Beck, Jessica Delgado, Joseph Gubeladze, and Mateusz Micha\l{}ek.
\newblock Very ample and {K}oszul segmental fibrations.
\newblock {\em J. Algebraic Combin.}, 42(1):165--182, 2015.

\bibitem[BG99]{BG99}
Winfried Bruns and Joseph Gubeladze.
\newblock Normality and covering properties of affine semigroups.
\newblock {\em J. Reine Angew. Math.}, 510:161--178, 1999.

\bibitem[BG02]{BG02}
Winfried Bruns and Joseph Gubeladze.
\newblock Semigroup algebras and discrete geometry.
\newblock In {\em Geometry of toric varieties}, volume~6 of {\em S\'emin.
  Congr.}, pages 43--127. Soc. Math. France, Paris, 2002.

\bibitem[BG09]{BG09}
Winfried Bruns and Joseph Gubeladze.
\newblock {\em Polytopes, rings, and {$K$}-theory}.
\newblock Springer Monographs in Mathematics. Springer, Dordrecht, 2009.

\bibitem[BGT97]{BGT97}
Winfried Bruns, Joseph Gubeladze, and Ng\^o~Vi\^et Trung.
\newblock Normal polytopes, triangulations, and {K}oszul algebras.
\newblock {\em J. Reine Angew. Math.}, 485:123--160, 1997.

\bibitem[BH17]{BH17}
Gabriele Balletti and Akihiro Higashitani.
\newblock Universal inequalities in {E}hrhart {T}heory.
\newblock \href{http://arxiv.org/abs/1703.09600}{\texttt{arXiv:1703.09600
  [math.CO]}}, 2017.

\bibitem[BHH{\etalchar{+}}15]{BHH15}
Tristram Bogart, Christian Haase, Milena Hering, Benjamin Lorenz, Benjamin
  Nill, Andreas Paffenholz, G\"unter Rote, Francisco Santos, and Hal Schenck.
\newblock Finitely many smooth {$d$}-polytopes with {$n$} lattice points.
\newblock {\em Israel J. Math.}, 207(1):301--329, 2015.

\bibitem[BK16]{BK16}
Gabriele Balletti and Alexander~M. Kasprzyk.
\newblock Three-dimensional lattice polytopes with two interior lattice points.
\newblock \href{http://arxiv.org/abs/1612.08918}{\texttt{arXiv:1612.08918
  [math.CO]}}, 2016.

\bibitem[BKN16]{BKN16}
Gabriele Balletti, Alexander~M. Kasprzyk, and Benjamin Nill.
\newblock On the maximum dual volume of a canonical fano polytope.
\newblock \href{http://arxiv.org/abs/1611.02455}{\texttt{arXiv:1611.02455
  [math.CO]}}, 2016.

\bibitem[Bli14]{Bli14}
H.~F. Blichfeldt.
\newblock A new principle in the geometry of numbers, with some applications.
\newblock {\em Trans. Amer. Math. Soc.}, 15(3):227--235, 1914.

\bibitem[BN07]{BN07}
Victor Batyrev and Benjamin Nill.
\newblock Multiples of lattice polytopes without interior lattice points.
\newblock {\em Mosc. Math. J.}, 7(2):195--207, 349, 2007.

\bibitem[BR15]{BR15}
Matthias Beck and Sinai Robins.
\newblock {\em Computing the continuous discretely}.
\newblock Undergraduate Texts in Mathematics. Springer, New York, second
  edition, 2015.

\bibitem[Bra16]{Br16}
Benjamin Braun.
\newblock Unimodality problems in {E}hrhart theory.
\newblock In {\em Recent trends in combinatorics}, volume 159 of {\em IMA Vol.
  Math. Appl.}, pages 687--711. Springer, [Cham], 2016.

\bibitem[BS16a]{BS16a}
M{\'o}nica Blanco and Francisco Santos.
\newblock Lattice 3-polytopes with few lattice points.
\newblock {\em SIAM J. Discrete Math.}, 30(2):669--686, 2016.

\bibitem[BS16b]{BS16b}
M{\'o}nica Blanco and Francisco Santos.
\newblock Lattice 3-polytopes with six lattice points.
\newblock {\em SIAM J. Discrete Math.}, 30(2):687--717, 2016.

\bibitem[BS17]{BS17}
M{\'o}nica Blanco and Francisco Santos.
\newblock Enumeration of lattice 3-polytopes by their number of lattice points.
\newblock {\em Discrete {\&} Computational Geometry}, Sep 2017.

\bibitem[Cas12]{Cas12}
Wouter Castryck.
\newblock Moving out the edges of a lattice polygon.
\newblock {\em Discrete Comput. Geom.}, 47(3):496--518, 2012.

\bibitem[DLRS10]{DRS10}
Jes{\'u}s~A. De~Loera, J{\"o}rg Rambau, and Francisco Santos.
\newblock {\em Triangulations}, volume~25 of {\em Algorithms and Computation in
  Mathematics}.
\newblock Springer-Verlag, Berlin, 2010.
\newblock Structures for algorithms and applications.

\bibitem[DLST01]{DST01}
Jes\'us~A. De~Loera, Francisco Santos, and Fumihiko Takeuchi.
\newblock Extremal properties for dissections of convex 3-polytopes.
\newblock {\em SIAM J. Discrete Math.}, 14(2):143--161, 2001.

\bibitem[Duo08]{Duo08}
Han Duong.
\newblock Minimal volume k-point lattice d-simplices.
\newblock \href{http://arxiv.org/abs/0804.2910}{\texttt{arXiv:0804.2910
  [math.CO]}}, 2008.

\bibitem[Ehr62]{Ehr62}
Eug\`ene Ehrhart.
\newblock Sur les poly\`edres rationnels homoth\'etiques \`a {$n$}\ dimensions.
\newblock {\em C. R. Acad. Sci. Paris}, 254:616--618, 1962.

\bibitem[Ful93]{Ful93}
William Fulton.
\newblock {\em Introduction to toric varieties}, volume 131 of {\em Annals of
  Mathematics Studies}.
\newblock Princeton University Press, Princeton, NJ, 1993.
\newblock The William H. Roever Lectures in Geometry.

\bibitem[Gr{\"u}03]{Gru03}
Branko Gr{\"u}nbaum.
\newblock {\em Convex polytopes}, volume 221 of {\em Graduate Texts in
  Mathematics}.
\newblock Springer-Verlag, New York, second edition, 2003.
\newblock Prepared and with a preface by Volker Kaibel, Victor Klee and
  G\"unter M. Ziegler.

\bibitem[Hen83]{Hen83}
Douglas Hensley.
\newblock Lattice vertex polytopes with interior lattice points.
\newblock {\em Pacific J. Math.}, 105(1):183--191, 1983.

\bibitem[HHJN17]{HHJN07}
Takayuki Hibi, Akihiro Higashitani, Katharina Jochemko, and Benjamin Nill.
\newblock Mini-{W}orkshop: {L}attice {P}olytopes: {M}ethods, {A}dvances,
  {A}pplications.
\newblock {\em Oberwolfach Rep.}, 14(3):2659--2701, 2017.

\bibitem[Hib94]{Hib94}
Takayuki Hibi.
\newblock A lower bound theorem for {E}hrhart polynomials of convex polytopes.
\newblock {\em Adv. Math.}, 105(2):162--165, 1994.

\bibitem[HPPS18]{HPPS14}
Christian Haase, Andreas Paffenholz, Lindsay~C. Piechnik, and Francisco Santos.
\newblock Existence of unimodular triangulations - positive results.
\newblock {\em Mem. Amer. Math. Soc.}, 2018+.
\newblock to appear.

\bibitem[HT09]{HT09}
Martin Henk and Makoto Tagami.
\newblock Lower bounds on the coefficients of {E}hrhart polynomials.
\newblock {\em European J. Combin.}, 30(1):70--83, 2009.

\bibitem[HT17]{HT17}
Takayuki Hibi and Akiyoshi Tsuchiya.
\newblock Classification of lattice polytopes with small volumes.
\newblock \href{http://arxiv.org/abs/1708.00413}{\texttt{arXiv:1708.00413
  [math.CO]}}, 2017.

\bibitem[Kas10]{Kas10}
Alexander~M. Kasprzyk.
\newblock Canonical toric {F}ano threefolds.
\newblock {\em Canad. J. Math.}, 62(6):1293--1309, 2010.

\bibitem[KN09]{KN09}
Maximilian Kreuzer and Benjamin Nill.
\newblock Classification of toric {F}ano 5-folds.
\newblock {\em Adv. Geom.}, 9(1):85--97, 2009.

\bibitem[KS98]{KS98}
Maximilian Kreuzer and Harald Skarke.
\newblock Classification of reflexive polyhedra in three dimensions.
\newblock {\em Adv. Theor. Math. Phys.}, 2(4):853--871, 1998.

\bibitem[KS00]{KS00}
Maximilian Kreuzer and Harald Skarke.
\newblock Complete classification of reflexive polyhedra in four dimensions.
\newblock {\em Adv. Theor. Math. Phys.}, 4(6):1209--1230, 2000.

\bibitem[KS03]{KS03}
J.-M. Kantor and K.~S. Sarkaria.
\newblock On primitive subdivisions of an elementary tetrahedron.
\newblock {\em Pacific J. Math.}, 211(1):123--155, 2003.

\bibitem[Lor10]{Lor10}
Benjamin Lorenz.
\newblock Classification of smooth lattice polytopes with few lattice points.
\newblock \href{http://arxiv.org/abs/1001.0514}{\texttt{arXiv:1001.0514
  [math.CO]}}, 2010.

\bibitem[LP08]{LP08}
Benjamin Lorenz and Andreas Paffenholz.
\newblock Smooth reflexive polytopes up to dimension $9$.
\newblock \url{http://polymake.org/polytopes/paffenholz/www/fano.html}, 2008.

\bibitem[Lun13]{Lun13}
Anders Lundman.
\newblock A classification of smooth convex 3-polytopes with at most 16 lattice
  points.
\newblock {\em J. Algebraic Combin.}, 37(1):139--165, 2013.

\bibitem[LZ91]{LZ91}
Jeffrey~C. Lagarias and G{\"u}nter~M. Ziegler.
\newblock Bounds for lattice polytopes containing a fixed number of interior
  points in a sublattice.
\newblock {\em Canad. J. Math.}, 43(5):1022--1035, 1991.

\bibitem[\O{}b07]{Obr07}
Mikkel \O{}bro.
\newblock An algorithm for the classification of smooth {F}ano polytopes.
\newblock \href{http://arxiv.org/abs/0704.0049}{\texttt{arXiv:0704.0049
  [math.CO]}}, 2007.

\bibitem[Oda08]{Oda08}
Tadao Oda.
\newblock Problems on minkowski sums of convex lattice polytopes.
\newblock \href{http://arxiv.org/abs/0812.1418}{\texttt{arXiv:0812.1418
  [math.AG]}}, 2008.

\bibitem[Pik01]{Pik01}
Oleg Pikhurko.
\newblock Lattice points in lattice polytopes.
\newblock {\em Mathematika}, 48(1-2):15--24 (2003), 2001.

\bibitem[Sat00]{Sat00}
Hiroshi Sato.
\newblock Toward the classification of higher-dimensional toric {F}ano
  varieties.
\newblock {\em Tohoku Math. J. (2)}, 52(3):383--413, 2000.

\bibitem[Sco76]{Sco76}
P.~R. Scott.
\newblock On convex lattice polygons.
\newblock {\em Bull. Austral. Math. Soc.}, 15(3):395--399, 1976.

\bibitem[Sta80]{Sta80}
Richard~P. Stanley.
\newblock Decompositions of rational convex polytopes.
\newblock {\em Ann. Discrete Math.}, 6:333--342, 1980.
\newblock Combinatorial mathematics, optimal designs and their applications
  (Proc. Sympos. Combin. Math. and Optimal Design, Colorado State Univ., Fort
  Collins, Colo., 1978).

\bibitem[Sta91]{Sta91}
Richard~P. Stanley.
\newblock On the {H}ilbert function of a graded {C}ohen-{M}acaulay domain.
\newblock {\em J. Pure Appl. Algebra}, 73(3):307--314, 1991.

\bibitem[Sta09]{Sta09}
Alan Stapledon.
\newblock Inequalities and {E}hrhart {$\delta$}-vectors.
\newblock {\em Trans. Amer. Math. Soc.}, 361(10):5615--5626, 2009.

\bibitem[Sta16]{Sta16}
Alan Stapledon.
\newblock Additive number theory and inequalities in {E}hrhart theory.
\newblock {\em Int. Math. Res. Not. IMRN}, (5):1497--1540, 2016.

\bibitem[Stu96]{Stu96}
Bernd Sturmfels.
\newblock {\em Gr\"obner bases and convex polytopes}, volume~8 of {\em
  University Lecture Series}.
\newblock American Mathematical Society, Providence, RI, 1996.

\bibitem[Tre10]{Tre10}
Jaron Treutlein.
\newblock Lattice polytopes of degree 2.
\newblock {\em J. Combin. Theory Ser. A}, 117(3):354--360, 2010.

\bibitem[WW82]{WW82}
Keiichi Watanabe and Masayuki Watanabe.
\newblock The classification of {F}ano 3-folds with torus embeddings.
\newblock {\em Tokyo J. Math.}, 5(1):37--48, 1982.

\bibitem[ZPW82]{ZPW82}
J.~Zaks, M.~A. Perles, and J.~M. Wilks.
\newblock On lattice polytopes having interior lattice points.
\newblock {\em Elem. Math.}, 37(2):44--46, 1982.

\end{thebibliography}

\newcommand{\etalchar}[1]{$^{#1}$}

\newpage
\appendix
\section{Smooth polytopes}
\label{app:smooth}

\begin{center}
    \small
    \begin{longtable}{c@{\ \ }cc@{\ \ }cc@{\ \ }c}
    \caption{
        Distribution of two-dimensional smooth polytopes by their 
        normalized volume.
    }
    \label{tab:smooth-2p}\\
    \toprule
    volume & \begin{tabular}{@{}c@{}}smooth \\ polytopes\end{tabular} & 
    volume & \begin{tabular}{@{}c@{}}smooth \\ polytopes\end{tabular} & 
    volume & \begin{tabular}{@{}c@{}}smooth \\ polytopes\end{tabular} \\ 
    \cmidrule(lr){1-2} \cmidrule(lr){3-4} \cmidrule(lr){5-6}
    \endfirsthead
    \orow 1  & 1  & 18 & 15 & 35 & 42  \\
    \erow 2  & 1  & 19 & 16 & 36 & 41  \\
    \orow 3  & 1  & 20 & 18 & 37 & 35  \\
    \erow 4  & 3  & 21 & 13 & 38 & 60  \\
    \orow 5  & 2  & 22 & 23 & 39 & 53  \\
    \erow 6  & 4  & 23 & 21 & 40 & 56  \\
    \orow 7  & 4  & 24 & 24 & 41 & 41  \\
    \erow 8  & 6  & 25 & 19 & 42 & 63  \\
    \orow 9  & 5  & 26 & 26 & 43 & 61  \\
    \erow 10 & 7  & 27 & 25 & 44 & 62  \\
    \orow 11 & 7  & 28 & 30 & 45 & 61  \\
    \erow 12 & 9  & 29 & 22 & 46 & 91  \\
    \orow 13 & 7  & 30 & 39 & 47 & 66  \\
    \erow 14 & 12 & 31 & 34 & 48 & 72  \\
    \orow 15 & 12 & 32 & 34 & 49 & 78  \\
    \erow 16 & 15 & 33 & 27 & 50 & 111 \\
    \orow 17 & 9  & 34 & 46 &    &     \\
    \bottomrule
    \end{longtable}
\end{center}

\begin{center}
    \small
    \begin{longtable}{c@{\ \ }cc@{\ \ }cc@{\ \ }c}
    \caption{
        Distribution of three-dimensional smooth polytopes by their 
        normalized volume.
    }
    \label{tab:smooth-3p}\\
    \toprule
    volume & \begin{tabular}{@{}c@{}}smooth \\ polytopes\end{tabular} & 
    volume & \begin{tabular}{@{}c@{}}smooth \\ polytopes\end{tabular} & 
    volume & \begin{tabular}{@{}c@{}}smooth \\ polytopes\end{tabular} \\ 
    \cmidrule(lr){1-2} \cmidrule(lr){3-4} \cmidrule(lr){5-6}
    \endfirsthead
    \orow 1  &  1  & 13 & 16 & 25 &  56 \\
    \erow 2  &  0  & 14 & 17 & 26 &  63 \\
    \orow 3  &  1  & 15 & 22 & 27 &  79 \\
    \erow 4  &  1  & 16 & 22 & 28 &  72 \\
    \orow 5  &  2  & 17 & 25 & 29 &  74 \\
    \erow 6  &  4  & 18 & 36 & 30 & 103 \\
    \orow 7  &  5  & 19 & 33 & 31 &  89 \\
    \erow 8  &  6  & 20 & 35 & 32 &  92 \\
    \orow 9  &  8  & 21 & 47 & 33 & 115 \\
    \erow 10 &  8  & 22 & 43 & 34 & 109 \\
    \orow 11 &  10 & 23 & 48 & 35 & 113 \\
    \erow 12 &  16 & 24 & 66 & 36 & 151 \\
    \bottomrule
    \end{longtable}
\end{center}

\begin{center}
    \small
    \begin{longtable}{c@{\ \ }cc@{\ \ }cc@{\ \ }c}
    \caption{
        Distribution of four-dimensional smooth polytopes by their 
        normalized volume.
    }
    \label{tab:smooth-4p}\\
    \toprule
    volume & \begin{tabular}{@{}c@{}}smooth \\ polytopes\end{tabular} & 
    volume & \begin{tabular}{@{}c@{}}smooth \\ polytopes\end{tabular} & 
    volume & \begin{tabular}{@{}c@{}}smooth \\ polytopes\end{tabular} \\ 
    \cmidrule(lr){1-2} \cmidrule(lr){3-4} \cmidrule(lr){5-6}
    \endfirsthead
    \orow 1  &  1  & 9  & 6  & 17 & 40  \\
    \erow 2  &  0  & 10 & 9  & 18 & 49  \\
    \orow 3  &  0  & 11 & 12 & 19 & 54  \\
    \erow 4  &  1  & 12 & 16 & 20 & 66  \\
    \orow 5  &  1  & 13 & 18 & 21 & 73  \\
    \erow 6  &  3  & 14 & 23 & 22 & 86  \\
    \orow 7  &  3  & 15 & 28 & 23 & 94  \\
    \erow 8  &  5  & 16 & 36 & 24 & 114 \\
    \bottomrule
    \end{longtable}
\end{center}

\begin{center}
    \small
    \begin{longtable}{c@{\ \ }cc@{\ \ }cc@{\ \ }c}
    \caption{
        Distribution of five-dimensional smooth polytopes by their 
        normalized volume.
    }
    \label{tab:smooth-5p}\\
    \toprule
    volume & \begin{tabular}{@{}c@{}}smooth \\ polytopes\end{tabular} & 
    volume & \begin{tabular}{@{}c@{}}smooth \\ polytopes\end{tabular} & 
    volume & \begin{tabular}{@{}c@{}}smooth \\ polytopes\end{tabular} \\ 
    \cmidrule(lr){1-2} \cmidrule(lr){3-4} \cmidrule(lr){5-6}
    \endfirsthead
    \orow 1  & 1 & 8  & 3  & 15 & 30 \\
    \erow 2  & 0 & 9  & 5  & 16 & 38 \\
    \orow 3  & 0 & 10 & 8  & 17 & 47 \\
    \erow 4  & 0 & 11 & 10 & 18 & 57 \\
    \orow 5  & 1 & 12 & 13 & 19 & 70 \\
    \erow 6  & 1 & 13 & 18 & 20 & 85 \\
    \orow 7  & 2 & 14 & 23 &    &    \\
    \bottomrule
    \end{longtable}
\end{center}

\begin{center}
    \small
    \begin{longtable}{c@{\ \ }cc@{\ \ }cc@{\ \ }c}
    \caption{
        Distribution of six-dimensional smooth polytopes by their 
        normalized volume.
    }
    \label{tab:smooth-6p}\\
    \toprule
    volume & \begin{tabular}{@{}c@{}}smooth \\ polytopes\end{tabular} & 
    volume & \begin{tabular}{@{}c@{}}smooth \\ polytopes\end{tabular} & 
    volume & \begin{tabular}{@{}c@{}}smooth \\ polytopes\end{tabular} \\ 
    \cmidrule(lr){1-2} \cmidrule(lr){3-4} \cmidrule(lr){5-6}
    \endfirsthead
    \orow 1  & 1   & 7  & 1  & 13 & 14 \\
    \erow 2  & 0   & 8  & 2  & 14 & 20 \\
    \orow 3  & 0   & 9  & 3  & 15 & 27 \\
    \erow 4  & 0   & 10 & 5  & 16 & 35 \\
    \orow 5  & 0   & 11 & 7  &    &    \\
    \erow 6  & 1   & 12 & 11 &    &    \\
    \bottomrule
    \end{longtable}
\end{center}

\newpage
\section{Plots of $h^*$-vectors of three-dimensional simplices}
\label{app:plots}

\begin{figure*}[ht]
    \centering
    \caption{Plots of the distribution of the $\h$-vectors of three-dimensional
    simplices having up to eleven interior lattice points.
    The area defined by the inequalities of Conjecture~\ref{conj:main} is 
    marked in blue.}
    \begin{adjustbox}{minipage=\linewidth,scale=0.8}
    \begin{subfigure}[t]{0.3\textwidth}
    \centering
    \includegraphics{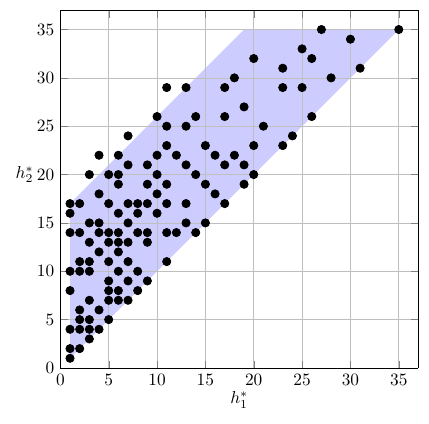}
    \caption{$\h_3 = 1$}
    \end{subfigure}
    ~
    \begin{subfigure}[t]{0.3\textwidth}
    \centering
    \includegraphics{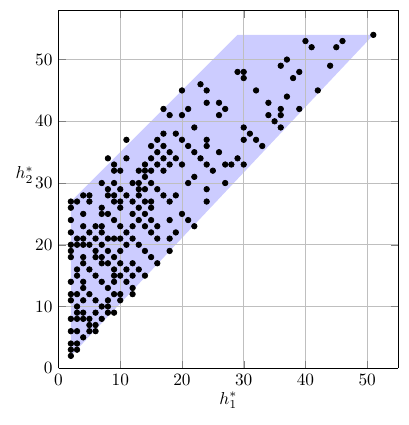}
    \caption{$\h_3 = 2$}
    \end{subfigure}
    ~
    \begin{subfigure}[t]{0.3\textwidth}
    \centering
    \includegraphics{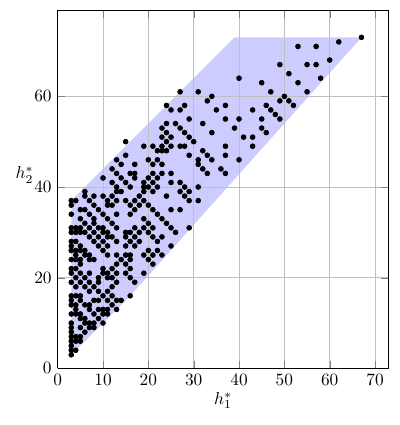}
    \caption{$\h_3 = 3$}
    \end{subfigure}
	\\
	\centering
    \begin{subfigure}[t]{0.3\textwidth}
    \centering
    \includegraphics{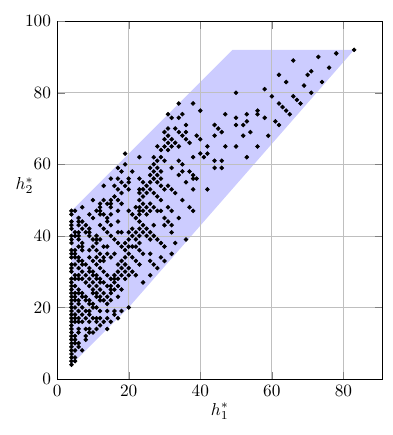}
    \caption{$\h_3 = 4$}
    \end{subfigure}
    ~
    \begin{subfigure}[t]{0.3\textwidth}
    \centering
    \includegraphics{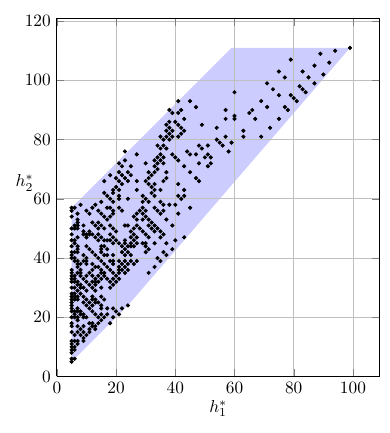}
    \caption{$\h_3 = 5$}
    \end{subfigure}
    ~
    \begin{subfigure}[t]{0.3\textwidth}
    \centering
    \includegraphics{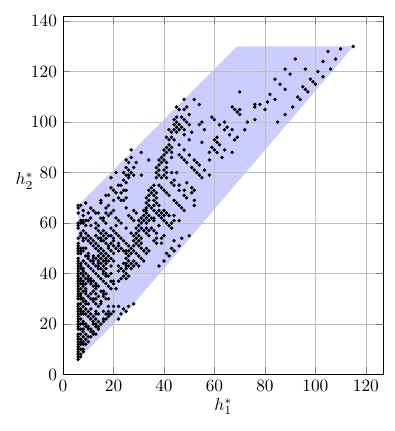}
    \caption{$\h_3 = 6$}
    \end{subfigure}
	\\
	\centering
    \begin{subfigure}[t]{0.3\textwidth}
    \centering
    \includegraphics{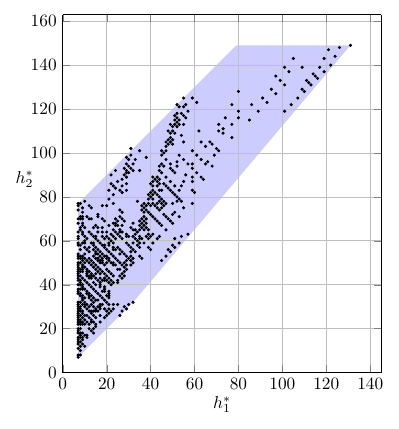}
    \caption{$\h_3 = 7$}
    \end{subfigure}%
    ~
    \begin{subfigure}[t]{0.3\textwidth}
    \centering
    \includegraphics{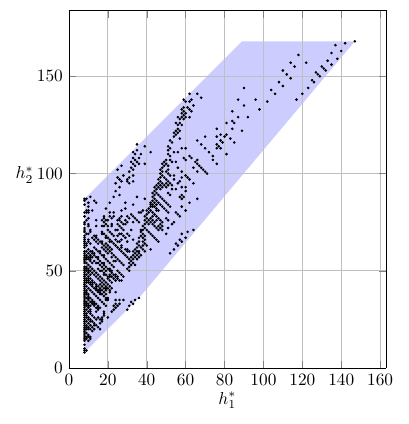}
    \caption{$\h_3 = 8$}
    \end{subfigure}
    ~
    \begin{subfigure}[t]{0.3\textwidth}
    \centering
    \includegraphics{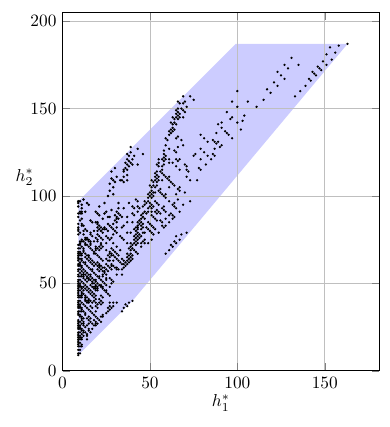}
    \caption{$\h_3 = 9$}
    \end{subfigure}
	\\
	\centering
    \begin{subfigure}[t]{0.3\textwidth}
    \centering
    \includegraphics{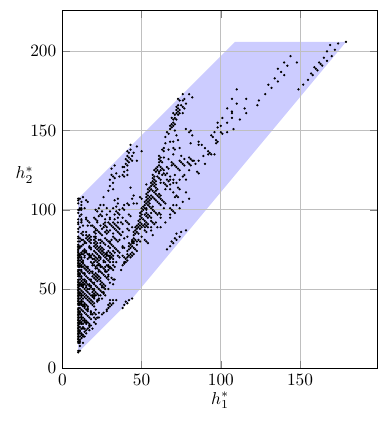}
    \caption{$\h_3 = 10$}
    \end{subfigure}%
    ~
    \begin{subfigure}[t]{0.3\textwidth}
    \centering
    \includegraphics{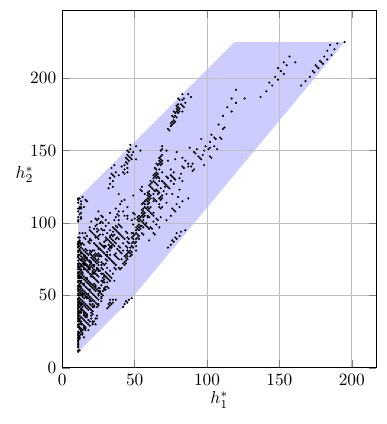}
    \caption{$\h_3 = 11$}
    \end{subfigure}
    \end{adjustbox}
\end{figure*}

\newpage
\section{Frequency of basic properties of lattice polytopes}
\label{app:tables}

\begin{center}
    \small
    \begin{longtable}{c@{\ \ }c@{\ \ }c@{\ \ }c@{\ \ }c@{\ \ }c@{\ \ }c}
    \caption{Number of three-dimensional polytopes (total, spanning, very ample,
	     IDP, having a unimodular cover, having a unimodular triangulation),
	     ordered by their normalized volume}
    \label{tab:W32}\\
    \toprule
    Volume&TOT&SP&VA&IDP&UC&UT\\
    \cmidrule(lr){1-1} \cmidrule(lr){2-7}
    \endfirsthead
    \multicolumn{7}{l}{\vspace{-0.7em}\tiny Continued from previous page.}\\
    \addlinespace[1.7ex]
    \midrule
	Volume&TOT&SP&VA&IDP&UC&UT\\
	\cmidrule(lr){1-1} \cmidrule(lr){2-7}
    \endhead
    \multicolumn{7}{r}{\raisebox{0.2em}{\tiny Continued on next page.}}\\
    \endfoot
    \bottomrule
    \endlastfoot
    \orow 1	 & 1       & 1		 & 1	   & 1		 & 1	& 1	   \\
    \erow 2	 & 3       & 2		 & 2	   & 2		 & 2	& 2	   \\
    \orow 3	 & 6       & 5		 & 5	   & 5		 & 5	& 5	   \\
    \erow 4	 & 17      & 15		 & 14	   & 14		 & 14	& 14   \\
    \orow 5	 & 19      & 17		 & 15	   & 15		 & 15	& 15   \\
    \erow 6	 & 54      & 51		 & 43	   & 43		 & 43	& 43   \\
    \orow 7	 & 59      & 57		 & 47	   & 47		 & 47	& 47   \\
    \erow 8	 & 154     & 147	 & 125	   & 125	 & 125	& 125  \\
    \orow 9	 & 181     & 177	 & 135	   & 135	 & 135	& 135  \\
    \erow 10 & 368     & 363	 & 291	   & 290	 & 290	& 290  \\
    \orow 11 & 414     & 411	 & 324	   & 323	 & 323	& 323  \\
    \erow 12 & 961     & 951	 & 748	   & 746	 & 746	& 745  \\
    \orow 13 & 1029    & 1025	 & 781	   & 779	 & 779	& 778  \\
    \erow 14 & 1929    & 1922	 & 1512	   & 1506	 & 1506	& 1506 \\
    \orow 15 & 2409    & 2403	 & 1843	   & 1837	 & 1837	& 1835 \\
    \erow 16 & 4254    & 4237	 & 3302	   & 3292	 & 3292	& 3288 \\
    \orow 17 & 4983    & 4978	 & 3801	   & 3787	 & 		& 	   \\ 
    \erow 18 & 8586    & 8574	 & 6656    & 6635	 & 		& 	   \\
    \orow 19 & 10186   & 10181	 & 7809	   & 7782	 & 		& 	   \\
    \erow 20 & 16708   & 16692	 & 13016   & 12971	 & 		& 	   \\
    \orow 21 & 20487   & 20479	 & 15630   & 15579	 & 		& 	   \\
    \erow 22 & 31163   & 31154	 & 24167   & 24085	 & 		& 	   \\
    \orow 23 & 37779   & 37773	 & 29271   & 29171	 & 		& 	   \\
    \erow 24 & 58906   & 58876	 & 45802   & 45663	 & 		& 	   \\
    \orow 25 & 70057   & 70049	 & 53907   & 53726	 & 		& 	   \\
    \erow 26 & 103117  & 103106	 & 80479   & 80225	 & 		& 	   \\
    \orow 27 & 126507  & 126495	 & 97652   & 97349	 & 		& 	   \\
    \erow 28 & 181732  & 181711	 & 141923  & 141488	 & 		& 	   \\
    \orow 29 & 219325  & 219317	 & 170327  & 169816	 & 		& 	   \\
    \erow 30 & 311917  & 311898	 & 243699  & 242984	 & 		& 	   \\
    \orow 31 & 376303  & 376295	 & 292843  & 291956	 & 		& 	   \\
    \erow 32 & 522559  & 522524	 & 409150  & 408010	 & 		& 	   \\
    \orow 33 & 636394  & 636382	 & 495472  & 494067	 & 		& 	   \\
    \erow 34 & 860937  & 860923	 & 675187  & 673321	 & 		& 	   \\
    \orow 35 & 1043226 & 1043214 & 816386  & 814161	 & 		& 	   \\
    \erow 36 & 1411304 & 1411272 & 1106938 & 1104038 & 		& 	   \\
    \end{longtable}
\end{center}

\begin{center}
    \small
    \begin{longtable}{c@{\ \ }c@{\ \ }c@{\ \ }c@{\ \ }c@{\ \ }c@{\ \ }c}
    \caption{Number of four-dimensional polytopes (total, spanning, very ample,
	     IDP, having a unimodular cover, having a unimodular triangulation),
	     ordered by their normalized volume}
    \label{tab:W32}\\
    \toprule
    Volume&TOT&SP&VA&IDP&UC&UT\\
    \cmidrule(lr){1-1} \cmidrule(lr){2-7}
    \endfirsthead
    \multicolumn{7}{l}{\vspace{-0.7em}\tiny Continued from previous page.}\\
    \addlinespace[1.7ex]
    \midrule
	Volume&TOT&SP&VA&IDP&UC&UT\\
	\cmidrule(lr){1-1} \cmidrule(lr){2-7}
    \endhead
    \multicolumn{7}{r}{\raisebox{0.2em}{\tiny Continued on next page.}}\\
    \endfoot
    \bottomrule
    \endlastfoot
    \orow 1  & 1      & 1      & 1      & 1      & 1    & 1    \\
    \erow 2  & 3      & 2      & 2      & 2      & 2    & 2    \\
    \orow 3  & 8      & 6      & 6      & 6      & 6    & 6    \\
    \erow 4  & 28     & 21     & 19     & 19     & 19   & 19   \\
    \orow 5  & 31     & 27     & 21     & 21     & 21   & 21   \\
    \erow 6  & 109    & 91     & 71     & 71     & 71   & 71   \\
    \orow 7  & 113    & 107    & 74     & 74     & 74   & 74   \\
    \erow 8  & 391    & 333    & 242    & 242    & 242  & 242  \\
    \orow 9  & 438    & 409    & 255    & 255    & 255  & 255  \\
    \erow 10 & 1019   & 956    & 618    & 618    & 618  & 618  \\
    \orow 11 & 1109   & 1094   & 664    & 664    & 664  & 664  \\
    \erow 12 & 3251   & 2993   & 1851   & 1850   & 1850 & 1849 \\
    \orow 13 & 3123   & 3103   & 1762   & 1761   & 1761 & 1760 \\
    \erow 14 & 6863   & 6680   & 3921   & 3918   &      &      \\
    \orow 15 & 8506   & 8327   & 4563   & 4560   &      &      \\
    \erow 16 & 17309  & 16681  & 9509   & 9500   &      &      \\
    \orow 17 & 18861  & 18826  & 10074  & 10066  &      &      \\
    \erow 18 & 38061  & 37224  & 20146  & 20125  &      &      \\
    \orow 19 & 42067  & 42023  & 22016  & 21997  &      &      \\
    \erow 20 & 80578  & 79132  & 42297  & 42253  &      &      \\
    \orow 21 & 94373  & 93832  & 47260  & 47214  &      &      \\
    \erow 22 & 158030 & 156975 & 81594  & 81501  &      &      \\
    \orow 23 & 184646 & 184580 & 92530  & 92429  &      &      \\
    \erow 24 & 330776 & 326283 & 165810 & 165631 &      &      \\
    \end{longtable}
\end{center}

\begin{center}
    \small
    \begin{longtable}{c@{\ \ }c@{\ \ }c@{\ \ }c@{\ \ }c@{\ \ }c@{\ \ }c}
    \caption{Number of five-dimensional polytopes (total, spanning, very ample,
	     IDP, having a unimodular cover, having a unimodular triangulation),
	     ordered by their normalized volume}
    \label{tab:W32}\\
    \toprule
    Volume&TOT&SP&VA&IDP&UC&UT\\
    \cmidrule(lr){1-1} \cmidrule(lr){2-7}
    \endfirsthead
    \multicolumn{7}{l}{\vspace{-0.7em}\tiny Continued from previous page.}\\
    \addlinespace[1.7ex]
    \midrule
	Volume&TOT&SP&VA&IDP&UC&UT\\
	\cmidrule(lr){1-1} \cmidrule(lr){2-7}
    \endhead
    \multicolumn{7}{r}{\raisebox{0.2em}{\tiny Continued on next page.}}\\
    \endfoot
    \bottomrule
    \endlastfoot
    \orow 1  & 1      & 1      & 1     & 1     & 1    & 1    \\
    \erow 2  & 4      & 2      & 2     & 2     & 2    & 2    \\ 
    \orow 3  & 10     & 6      & 6     & 6     & 6    & 6    \\ 
    \erow 4  & 38     & 23     & 21    & 21    & 21   & 21   \\ 
    \orow 5  & 42     & 33     & 25    & 25    & 25   & 25   \\ 
    \erow 6  & 169    & 115    & 86    & 86    & 86   & 86   \\ 
    \orow 7  & 163    & 144    & 90    & 90    & 90   & 90   \\ 
    \erow 8  & 659    & 475    & 322   & 322   & 322  & 322  \\ 
    \orow 9  & 707    & 600    & 344   & 344   & 344  & 344  \\ 
    \erow 10 & 1737   & 1465   & 841   & 841   & 841  & 841  \\ 
    \orow 11 & 1743   & 1685   & 869   & 869   & 869  & 869  \\ 
    \erow 12 & 6294   & 5022   & 2791  & 2791  & 2791 & 2790 \\ 
    \orow 13 & 5101   & 5007   & 2392  & 2392  &      &      \\ 
    \erow 14 & 12640  & 11533  & 5757  & 5756  &      &      \\ 
    \orow 15 & 15373  & 14315  & 6656  & 6655  &      &      \\ 
    \erow 16 & 34637  & 30638  & 14873 & 14870 &      &      \\
    \orow 17 & 32858  & 32650  & 14317 & 14314 &      &      \\
    \erow 18 & 77727  & 70953  & 32169 & 32160 &      &      \\
    \orow 19 & 75401  & 75103  & 32282 & 32272 &      &      \\
    \erow 20 & 167969 & 155336 & 68509 & 68488 &      &      \\
    \end{longtable}
\end{center}

\begin{center}
    \small
    \begin{longtable}{c@{\ \ }c@{\ \ }c@{\ \ }c@{\ \ }c@{\ \ }c@{\ \ }c}
    \caption{Number of six-dimensional polytopes (total, spanning, very ample,
	     IDP, having a unimodular cover, having a unimodular triangulation),
	     ordered by their normalized volume}
    \label{tab:W32}\\
    \toprule
    Volume&TOT&SP&VA&IDP&UC&UT\\
    \cmidrule(lr){1-1} \cmidrule(lr){2-7}
    \endfirsthead
    \multicolumn{7}{l}{\vspace{-0.7em}\tiny Continued from previous page.}\\
    \addlinespace[1.7ex]
    \midrule
	Volume&TOT&SP&VA&IDP&UC&UT\\
	\cmidrule(lr){1-1} \cmidrule(lr){2-7}
    \endhead
    \multicolumn{7}{r}{\raisebox{0.2em}{\tiny Continued on next page.}}\\
    \endfoot
    \bottomrule
    \endlastfoot
    \orow 1  & 1     & 1     & 1     & 1     & 1   & 1   \\
    \erow 2  & 4     & 2     & 2     & 2     & 2   & 2   \\ 
    \orow 3  & 11    & 6     & 6     & 6     & 6   & 6   \\  
    \erow 4  & 48    & 24    & 22    & 22    & 22  & 22  \\  
    \orow 5  & 51    & 36    & 27    & 27    & 27  & 27  \\  
    \erow 6  & 228   & 129   & 94    & 94    & 94  & 94  \\  
    \orow 7  & 204   & 167   & 97    & 97    & 97  & 97  \\  
    \erow 8  & 961   & 560   & 362   & 362   & 362 & 362 \\  
    \orow 9  & 970   & 728   & 392   & 392   & 392 & 392 \\  
    \erow 10 & 2444  & 1801  & 959   & 959   & 959 & 959 \\  
    \orow 11 & 2249  & 2092  & 964   & 964   & 964 & 964 \\  
    \erow 12 & 9872  & 6461  & 3362  & 3362  &     &     \\  
    \orow 13 & 6622  & 6334  & 2676  & 2676  &     &     \\  
    \erow 14 & 18069 & 14972 & 6684  & 6684  &     &     \\ 
    \orow 15 & 21837 & 18704 & 7828  & 7828  &     &     \\ 
    \erow 16 & 53513 & 41025 & 18006 & 18005 &     &     \\
    \end{longtable}
\end{center}

\end{document}